\DeclareMathAlphabet{\mathpzc}{OT1}{pzc}{m}{it}
\newcommand{\llangle}{\langle\langle}
\newcommand{\rrangle}{\rangle\rangle}
\newcommand{\T}{\mathcal{T}}
\newcommand{\C}{\mathcal{C}}
\newcommand{\la}{\lambda}
\newcommand{\N}{\mathbb{N}}
\newcommand{\bH}{\mathbb{H}}
\newcommand{\R}{\mathbb{R}}
\newcommand{\Z}{\mathbb{Z}}
\newcommand{\I}{\mathcal{I}}
\newcommand{\Mod}{\operatorname{Mod}}
\newcommand{\PMod}{\operatorname{PMod}}
\newcommand{\Homeo}{\operatorname{Homeo}}
\newcommand{\Aut}{\operatorname{Aut}}
\renewcommand{\H}{\mathcal{H}}
\let\c@equation\c@subsection
\numberwithin{equation}{section} 
\newtheorem{theorem}{Theorem}[section]
\newtheorem*{claim*}{Claim}
\newtheorem{lemma}[theorem]{Lemma}
\theoremstyle{definition}
\newtheorem{definition}[theorem]{Definition}
\newtheorem{example}[theorem]{Example}
\newtheorem{question}[theorem]{Question}
\theoremstyle{remark}
\newtheorem{remark}[theorem]{Remark}
\title{Normal generators for mapping class groups}
\author{Hyungryul Baik}
\address{\hskip-\parindent
Department of Mathematical Sciences\\
Korea Advanced Institute of Science and Technology (KAIST)\\
291 Daehak-ro, Yuseong-gu,  Daejeon 34141, Republic of Korea}
\email{hrbaik@kaist.ac.kr }
\author{Dongryul M. Kim}
\address{\hskip-\parindent
Department of Mathematics\\
Yale University\\
219 Prospect St, New Haven, CT 06511, USA}
\email{dongryul.kim97@gmail.com}
\date{\today}
\begin{document}

\begin{abstract}
In this chapter, we discuss normal generators for mapping class groups of surfaces. Especially, we focus on the relation between normal generation of a mapping class with its asymptotic translation lengths on the Teichm\"uller space and the curve graph of the underlying surface. We also discuss several open questions.
\end{abstract}

\subjclass{Primary 57K20; Secondary 57M60}
\keywords{Mapping class groups, Translation lengths, Curve graphs, Teichm\"uller spaces, Normal generators}

\maketitle
\tableofcontents

\section{Introduction}

Ever since Thurston brought it to prominence \cite{Thurston_construction}, the mapping class group of a surface has become a ubiquitous object in the study of geometry, topology, and dynamics in low-dimensional settings. He classified homeomorphisms of surfaces, a classification which divides mapping classes into three types: periodic, reducible, and pseudo-Anosov (\cite{Nielsen_classification}, \cite{Thurston_construction}). This classification, now called Nielsen--Thurston classification, provides a powerful and fundamental tool for understanding the dynamics of surface diffeomorphisms. His work \cite{Thurston_norm} also introduced the concept of Thurston norm on the homology of 3-manifolds, which has deep connections to surface theory and mapping class groups.

Formally speaking, given a surface, the mapping class group\index{mapping class group} is defined as the group of orientation preserving homeomorphisms, modulo the subgroup consisting of homeomorphisms isotopic to the identity. 
 For a general background on the subject, see the books of Farb and Margalit \cite{FM_primer} and of Minsky \cite{Minsky_book}.

For one thing, the quotient of the Teichm\"uller space\index{Teichm\"uller space} by the mapping class group is the moduli space of algebraic curves. More precisely, let $S_g$ be the closed orientable connected surface of genus $g \geq 2$. The Teichm\"uller space $\mathcal{T}(S_g)$ is the space of all marked hyperbolic structures on $S_g$. Then the mapping class group $\Mod(S_g)$ acts properly discontinuously on $\mathcal{T}(S_g)$ by isometries and the quotient is the moduli space\index{moduli space} $\mathcal{M}_g$ of algebraic curves of genus $g$. In fact, since $\mathcal{T}(S_g)$ is simply connected, the Teichm\"uller space $\mathcal{T}(S_g)$ is the orbifold universal cover of the moduli space $\mathcal{M}_g$ and $\Mod(S_g)$ is the orbifold fundamental group of $\mathcal{M}_g$.

Given that, understanding various subgroups of $\Mod(S_g)$ is related to understanding various covers of the moduli space. In particular, it would be interesting to study what normal subgroups of  $\Mod(S_g)$ can exist, in order to understand regular covers of $\mathcal{M}_g$. 

One of the most famous examples of proper normal subgroups of the mapping class group is the so-called Torelli group $\mathcal{I}_g$. The Torelli group\index{Torelli group} is defined as the subgroup of $\Mod(S_g)$ of elements which act trivially on $H_1(S_g) := H_1(S_g ;  \mathbb{Z})$. It is easy to see that Dehn twists along separating curves are elements of this group (see Example \ref{ex.Dehn} for the definition of Dehn twist). Another well-known type of elements in the Torelli group is the so-called bounding pair map\index{bounding pair!bounding pair map}. It is of the form $T_{b_1} \circ T_{b_2}^{-1}$ where $b_1, b_2$ are disjoint homologous simple closed curves and $T_{b_i}$ denotes the Dehn twist along the curve $b_i$, $i = 1, 2$. In fact, the Torelli group is generated by Dehn twists along separating curves and bounding pair maps. Another famous example is the Johnson kernel\index{Johnson kernel} $\mathcal{K}_g$. This is the kernel of the Johnson homomorphism and it is the subgroup of $\mathcal{I}_g$ generated by Dehn twists along separating curves. In fact, there exists a whole filtration of proper normal subgroups including these examples, called Johnson filtration, but we are not going into details about it in this chapter. 

Another direction of research is to find subgroups of certain structures. One of the most notable and foundational examples along this line was given by the work of Koberda \cite{Koberda_RAAG}, which shows that if a finite simplicial graph $\Gamma$ is an induced subgraph of the curve graph of $S_g$ which will be defined later, then the right-angled Artin group $A(\Gamma)$ is a subgroup of $\Mod(S_g)$. Later, Clay, Mangahas, and Margalit \cite{CMM_RAAG} showed that there are also normal subgroups of $\Mod(S_g)$ isomorphic to the right-angled Artin groups. See also \cite{KK_RAAG} for related results.

One can ask a slightly different question, namely, what elements can a proper normal subgroup of $\Mod(S_g)$ have? In fact, the question we would like to focus on in this chapter is the opposite question: which elements of $\Mod(S_g)$ are never contained in any proper normal subgroups? From the point of view of $\Mod(S_g) = \pi_1(\mathcal{M}_g)$, 
one can interpret this question as asking which closed curves in the moduli space never lift to a closed curve in any regular cover of the moduli space. In general, an element $h$ of a group $G$ is called a \emph{normal generator}\index{normal generator} if its normal closure $\llangle h \rrangle$ is the entire group $G$, i.e., no proper normal subgroup contains the given element $h$. Hence, our aim is to find normal generators of the mapping class groups.

Maher and Tiozzo proved that normal generators of mapping class groups are not generic \cite{MT_RW}, by showing that the normal closure of a random mapping class is a free group.
This suggests that finding a normal generator of $\Mod(S_g)$ is a challenging task. On the other hand, Lickorish \cite{Lickorish_generators} and Mumford \cite{Mumford_normal} showed that the Dehn twist along a non-separating simple closed curve on $S_g$, which is obtained by cutting, rotating, and regluing the surface along the curve, is a normal generator of $\Mod(S_g)$. Many periodic normal generators are also given by Harvey and Korkmaz (\cite{HK_homomorphisms}, \cite{Korkmaz_generating}), Lanier \cite{Lanier_periodic}, McCarthy and Papadopoulos \cite{MP_involutions}, and Yoshihara \cite{yoshihara2016generating}. Indeed, Lanier and Margalit \cite{LM_NG}  proved that every non-trivial periodic mapping class that is not a hyperelliptic involution is a normal generator of $\Mod(S_g)$ when $g \ge 3$.

For pseudo-Anosov mapping classes, surprisingly, it was shown by Lanier and Margalit \cite{LM_NG} that the normal generation of a mapping class turns out to be related to its (asymptotic) translation length\index{asymptotic translation length} on  Teichm\"uller space: for $f \in \Mod(S_g)$, its translation length on $\T(S_g)$ is defined as $$\ell_{\T}(f) := \lim_{n \to \infty} \frac{d_{\T}(o, f^n(o))}{n}$$
where $d_{\T}$ is the Teichm\"uller distance on $\T(S_g)$ and $o$ is an arbitrary point in $\T(S_g)$. 
More precisely, they showed the following criterion for mapping classes to be normal generators: pseudo-Anosov mapping classes with small translation lengths on Teichm\"uller spaces are normal generators. Together with the work \cite{Penner_bounds} of Penner, it follows that pseudo-Anosov normal generators are abundant in the following sense. 

\begin{theorem}[Lanier--Margalit] \label{thm.lmintro}
Let $f \in \Mod(S_g)$ for $g \geq 3$. Then $f$ is a normal generator if one of the followings holds: 
\begin{itemize}
    \item $f$ is of finite order and is not a hyperelliptic involution.
    \item $f$ is pseudo-Anosov and $\ell_{\T}(f) \le \log \sqrt{2}$.
\end{itemize}
\end{theorem}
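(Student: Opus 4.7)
The overall strategy is to reduce both cases to producing a single Dehn twist about a nonseparating simple closed curve inside $\llangle f \rrangle$. This reduction is legitimate because the change-of-coordinates principle makes all Dehn twists about nonseparating curves conjugate in $\Mod(S_g)$, and these twists together generate $\Mod(S_g)$ (e.g., via a Humphries generating set); hence exhibiting even one such twist in a normal subgroup forces the subgroup to be all of $\Mod(S_g)$. The exclusion of the hyperelliptic involution $\iota$ is genuinely needed here: $\iota$ acts as $-I$ on $H_1(S_g;\ZZ)$, so every element of $\llangle \iota \rrangle$ lands in $\{\pm I\} \subset \Sp_{2g}(\ZZ)$ under the symplectic representation, which is a proper subgroup for $g \geq 2$.

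For the pseudo-Anosov case, I would first use the identity $\ell_\T(f) = \log \lambda(f)$, so the hypothesis becomes $\lambda(f) \leq \sqrt{2}$. The main geometric input I would try to extract is that this dilatation bound, via Penner-style estimates combined with comparison inequalities between Teichm\"uller and curve-graph translation lengths, forces the existence of a simple closed curve $c$ whose curve-graph distance to $f(c)$ is at most one, so $c$ and $f(c)$ are either equal (ruled out by pseudo-Anosovness) or disjoint. Given such a $c$, the commutator
\[
[f, T_c] \;=\; f T_c f^{-1} T_c^{-1} \;=\; T_{f(c)} T_c^{-1}
\]
lies in $\llangle f \rrangle$, since we can rewrite it as $f \cdot (T_c f T_c^{-1})^{-1}$, a product of $f$ and the inverse of a conjugate of $f$. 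From this bounding-pair-type element one then extracts a single Dehn twist by conjugating with a carefully chosen mapping class that alters the intersection pattern of $c$ and $f(c)$, and exploiting lantern- or braid-type relations to make the unwanted twist cancel.

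For the finite-order case, I would adopt a Nielsen-realization viewpoint: realize $f$ as an isometry of some hyperbolic structure, so $S_g/\langle f \rangle$ becomes an orbifold with concrete signature. Provided $g \geq 3$ and $f$ is not the hyperelliptic involution, this orbifold has enough room that one can find a simple closed curve $c$ and an auxiliary mapping class $\alpha$ for which $\alpha f \alpha^{-1} f^{-1} \in \llangle f \rrangle$ is supported on a small subsurface and restricts there to a Dehn twist, or to a product from which a Dehn twist can be further isolated. The hyperelliptic involution is excluded for the homological reason above. I expect the main obstacle, in both cases, to be the final step of extracting an honest Dehn twist from bounding-pair-style commutators like $T_{f(c)} T_c^{-1}$: the combinatorial identities in $\Mod(S_g)$ needed to make all ``extra'' twists cancel are delicate, and the threshold $\log\sqrt{2}$ arises precisely at the boundary where Penner's bounds still force $c$ and $f(c)$ to be disjoint, keeping this extraction feasible.
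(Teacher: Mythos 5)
Your reduction is not the one the paper (following Lanier--Margalit) uses, and as written it has two genuine gaps in the pseudo-Anosov case. First, the existence of a curve $c$ with $d_{\C}(c, f(c)) \le 1$ does not follow from ``comparison inequalities between Teichm\"uller and curve-graph translation lengths'': those compare asymptotic quantities and say nothing about what a single application of $f$ does to any particular curve. The actual input is Bers' identity $\ell_{\T}(f) = \log \la_f$ together with the singular Euclidean (quadratic differential) metric attached to $f$: one takes a systole $c$ of that flat metric and uses length--area--intersection inequalities of Farb--Leininger--Margalit to conclude $i(c, f(c)) \le 1$; note that the threshold $\sqrt{2}$ only gives $i(c,f(c))\le 1$, so the intersection-one configuration must be handled, not just disjointness. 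Second, and more seriously, the step you yourself flag as the main obstacle --- extracting an honest Dehn twist from $T_{f(c)}T_c^{-1}$ by conjugations and lantern/braid relations --- is left unproved and actually fails in the disjoint case when $c \cup f(c)$ separates: then $T_{f(c)}T_c^{-1}$ is a bounding pair map, its normal closure is contained in the Torelli group, and no single twist can be extracted from it. The established route never produces a single twist in $\llangle f \rrangle$. Instead (Theorem \ref{thm.wellsuited}) one shows that $\llangle f \rrangle$ contains the differences $T_a T_b^{-1}$ for a Dehn--Lickorish generating family with $i(a,b) \le 1$, hence contains $[\Mod(S_g), \Mod(S_g)]$, and then one invokes Harer's perfectness of $\Mod(S_g)$ for $g \ge 3$ (Theorem \ref{thm.harer}). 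This perfectness step is indispensable and is entirely absent from your outline; the observation that nonseparating twists are all conjugate and generate cannot substitute for it, because you never actually obtain a twist.

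The finite-order case in your proposal is a plan rather than a proof: ``the orbifold has enough room that one can find a curve $c$ and an auxiliary $\alpha$ \dots'' is precisely the content that must be established. Lanier--Margalit analyze the quotient orbifold from Nielsen realization and exhibit explicit curves $c$ for which $c$ and $f(c)$ are in a well-suited configuration (disjoint with nonseparating union, or intersecting once), and then run the same criterion-plus-perfectness argument; your version again funnels everything through the unjustified twist-extraction step. Your homological explanation for excluding the hyperelliptic involution (it acts as $-I$ on $H_1$, so its normal closure maps into $\{\pm I\}$ under the symplectic representation) is correct, but the two core steps above need to be repaired before this counts as a proof.
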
 

Lanier and Margalit proved this theorem by giving a sufficient and necessary condition for a given mapping class to be a normal generator, which we will discuss in Section \ref{sec:LMcriterion}. In our work with Wu \cite{BKW_reducible}, we extended their theorems to certain reducible mapping classes. Before presenting the statement, we first note that if $f \in \Mod(S_g)$ satisfies $\ell_{\T}(f) > 0$, then there exists a subsurface $A \subset S_g$ which is invariant under some power of $f$ and whose restriction to $A$ is pseudo-Anosov.

\begin{theorem}[Baik--Kim--Wu]
Let $f \in \Mod(S_g)$ be a mapping class that preserves a subsurface $A \subseteq S_g$ of genus at least three and suppose that $f|_A$ is pseudo-Anosov. If $\ell_{\T}(f) \le \log \sqrt{2}$, then $f$ is a normal generator.
\end{theorem}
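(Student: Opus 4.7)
The plan is to reduce to the closed-surface pseudo-Anosov case handled by Lanier-Margalit, by working on the invariant subsurface $A$ and then pushing the resulting normal-closure statement from $\Mod(A)$ back to $\Mod(S_g)$. First, a standard fact about reducible mapping classes gives $\ell_{\T(A)}(f|_A) \le \ell_\T(f) \le \log\sqrt{2}$: the restriction stretches curves in $A$ by a factor bounded above by the global dilatation. Capping off the boundary components of $A$ produces a closed surface $\hat{A}$ of genus at least three on which the induced mapping class $\hat{f}$ is pseudo-Anosov with the same stretch factor, so $\ell_{\T(\hat{A})}(\hat{f}) \le \log\sqrt{2}$ as well, and Theorem~\ref{thm.lmintro} applies to $\hat{f} \in \Mod(\hat{A})$.

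I would then apply the underlying Lanier-Margalit criterion (to be discussed in Section~\ref{sec:LMcriterion}) to $\hat{f} \in \Mod(\hat{A})$. The key is to extract not merely the conclusion that $\hat{f}$ normally generates $\Mod(\hat{A})$, but the explicit word in conjugates of $\hat{f}$ whose product equals a Dehn twist $T_c$ along a non-separating simple closed curve $c \subset \hat{A}$. By exploiting the flexibility in the LM construction, $c$ may be taken to lie in the subsurface $A$ itself (and not in the glued-in caps), so that $c$ is a non-separating simple closed curve of $A$.

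The next step transfers this identity into $\Mod(S_g)$. Since $c \subset A$ and $f$ preserves $A$, the conjugation $f\, T_c\, f^{-1} = T_{f(c)}$ holds in $\Mod(S_g)$ and matches its analogue in $\Mod(A)$, because $f(c) = f|_A(c)$. Plugging $f$ in place of $\hat{f}$ into the same word in conjugates therefore yields $T_c \in \llangle f \rrangle \subset \Mod(S_g)$. A simple closed curve non-separating in $A$ is automatically non-separating in $S_g$, because any arc in $A$ connecting the two sides of $c$ is equally an arc in $S_g$. Hence $T_c$ is a Dehn twist along a non-separating curve of $S_g$, and the classical theorem that the normal closure of any such Dehn twist is all of $\Mod(S_g)$ gives $\llangle f \rrangle = \Mod(S_g)$.

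The main obstacle I anticipate is the transfer step: one must verify that the LM construction of $T_c$ as a word in conjugates of $\hat{f}$ is sufficiently \emph{local} so that, after replacing $\hat{f}$ by $f$ and interpreting the conjugators via extension-by-identity in $\Mod(S_g)$, the resulting element is still exactly $T_c$ rather than $T_c$ multiplied by something in the kernel of $\Mod(A) \to \Mod(\hat{A})$ (i.e.\ by boundary twists of $A$). Because the LM argument is assembled from conjugation relations of the form $\hat{f}\,T_\gamma\,\hat{f}^{-1} = T_{\hat{f}(\gamma)}$ with $\gamma \subset A$, and these relations are stable under the maps $\Mod(A) \to \Mod(\hat{A})$ and $\Mod(A) \to \Mod(S_g)$, the transfer should go through; still, a careful audit of each factor of the word, and of where it is supported, is what the proof actually requires.
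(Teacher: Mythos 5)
Your reduction breaks down at two places, and the second is fatal. First, the inequality $\ell_{\T(A)}(f|_A) \le \ell_{\T}(f)$ is not the formal "standard fact" you describe: a reducible $f$ has no global dilatation, so "stretches curves by a factor bounded by the global dilatation" is not an argument. The paper proves exactly this inequality using Minsky's product region theorem (Theorem~\ref{thm.product}), applied to marked hyperbolic structures in which $\partial A$ is very short and comparing $d_{\T(S_g)}(\sigma, f^m\sigma)$ with $d_{\T(A)}(\pi(\sigma), f|_A^m\pi(\sigma))$ up to the additive error $\delta$. Second, and more seriously, the capping-and-transfer mechanism does not work. Capping $\partial A$ by disks need not produce a pseudo-Anosov with the same stretch factor: filling in boundary/punctures can create forbidden interior $1$-pronged singularities and can destroy pseudo-Anosovity altogether (pseudo-Anosov braids become trivial after filling the punctures of the disk), so Theorem~\ref{thm.lmintro} may simply not apply to $\hat f$. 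Even granting that, there is no homomorphism $\Mod(\hat A) \to \Mod(A)$ or $\Mod(\hat A) \to \Mod(S_g)$ along which a word identity could be pushed, and the Lanier--Margalit argument is not "assembled from conjugation relations with $\gamma \subset A$": in the well-suited criterion (Theorem~\ref{thm.wellsuited}) the conjugators are arbitrary mapping classes of the whole surface, arising from Dehn--Lickorish generators and change of coordinates, and the output is not an explicit expression of a single Dehn twist but containment of the commutator subgroup followed by Harer's perfectness. So the "careful audit of each factor" you defer to is precisely the missing proof, and the ambiguity by boundary twists (the kernel of $\Mod(A)\to\Mod(\hat A)$) is not controllable by the relations you cite.

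The paper's route avoids both problems. It never caps off: the punctured/bounded-surface version of Theorem~\ref{thm.LMtrans} is applied directly to $f|_A \in \Mod(A)$ (the well-suited criterion and the ingredients from Farb--Leininger--Margalit allow punctures), which together with Harer's theorem gives $\PMod(A) \le \llangle f|_A \rrangle$ inside $\Mod(A)$. The passage to $S_g$ is then a separate "locality of normal generation" statement (Theorem~\ref{thm.locality}): one shows the graph $N_f(S_g)$ is connected by taking any two non-separating curves, arranging via Lemma~\ref{lem.sequence} that they are disjoint and not a bounding pair, moving both into $A$ by a single $h \in \Mod(S_g)$, and using transitivity of $\PMod(A)$ on non-separating curves of $A$ together with $\PMod(A) \le \llangle f|_A\rrangle$; the conjugators produced there live in $\Mod(A)$ and are extended by the identity, which is why the identity of actions on curves in $A$ survives in $\Mod(S_g)$. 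One then applies the well-suited criterion on $S_g$ itself plus perfectness of $\Mod(S_g)$. If you want to keep your outline, replace the capping and word-transfer steps by this connectivity-of-$N_f(S_g)$ argument, and justify the translation-length comparison by the product region theorem rather than by an appeal to a dilatation that $f$ does not have.
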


Another metric space on which $\Mod(S_g)$ naturally acts is the curve graph\index{curve graph} of $S_g$. This graph $\C(S_g)$ is defined as the graph whose vertices are isotopy classes of essential simple closed curves on $S_g$ and where two vertices are connected by an edge if they have disjoint representatives. The curve graph was first introduced by Harvey \cite{Harvey_CC}. We equip $\C(S_g)$ with a simplicial metric $d_{\C}$. For $f \in \Mod(S_g)$, its (asymptotic) translation length\index{asymptotic translation length} on $\C(S_g)$ is defined in the same way: $$\ell_{\C}(f) := \lim_{n \to \infty} \frac{d_{\C}(o, f^n(o))}{n}$$ for $o \in \C(S_g)$.

Masur and Minsky showed in \cite{MM_CC} that $\C(S_g)$ is Gromov hyperbolic. Moreover, $\ell_{\C}(f) > 0$ if and only if $f$ is pseudo-Anosov. Hence, from the point of view of Theorem \ref{thm.lmintro}, it is natural to ask whether small $\ell_{\C}(\cdot)$ implies normal generation (cf. \cite[Question 1.2]{BKSW_asymptotic}). While this question is wide open, the following theorem shows how small it should be to make the question have an affirmative answer:

\begin{theorem}[Baik--Kim--Wu]
    For each $g \ge 578$, there exists a pseudo-Anosov $f_g \in \Mod(S_g)$ such that $$f_g \notin \I_g \quad \text{and} \quad \ell_{\C}(f_g) \le \frac{1152}{g - 577}$$
    while $f_g$ is not a normal generator for $\Mod(S_g)$.
\end{theorem}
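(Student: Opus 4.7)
The plan is to construct $f_g$ so that its normal closure is visibly contained in a proper normal subgroup of $\Mod(S_g)$ that strictly contains the Torelli group. The natural target is the level-$2$ congruence subgroup
\[
\Mod(S_g)[2] := \ker\bigl(\Mod(S_g) \to \Sp(2g,\Z/2\Z)\bigr),
\]
which is a proper normal subgroup strictly containing $\I_g$. Any pseudo-Anosov $f_g \in \Mod(S_g)[2] \setminus \I_g$ would then automatically satisfy both the ``$f_g \notin \I_g$'' requirement and the ``not a normal generator'' requirement. So the task reduces to producing such a pseudo-Anosov with curve graph translation length at most $1152/(g - 577)$.

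To produce this pseudo-Anosov, I would use a subsurface weaving construction, in the spirit of the authors' earlier work, to build an auxiliary pseudo-Anosov $\phi_g \in \Mod(S_g)$ whose curve graph translation length is already controlled by $1/(g - 577)$. The idea is to assemble $\phi_g$ from local pieces supported on an increasing sequence of subsurfaces of $S_g$, so that the combinatorial complexity of the assembly grows linearly in $g - 577$ and the induced translation length on $\C(S_g)$ is forced to shrink by standard subsurface projection and coarse Lipschitz arguments. To force $\phi_g$ into $\Mod(S_g)[2] \setminus \I_g$, I would use \emph{squared} Dehn twists in the weaving in place of individual Dehn twists, since $T_c^2$ acts on $H_1(S_g;\Z)$ by $\alpha \mapsto \alpha + 2\,\hat{\imath}(\alpha,c)[c]$ and hence lies in $\Mod(S_g)[2]$ but outside $\I_g$ whenever $c$ is nonseparating. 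Choosing the filling curves used in the weaving so that the total homological action is nonzero but divisible by $2$ then lands $f_g := \phi_g$ in $\Mod(S_g)[2] \setminus \I_g$, while keeping the translation-length estimate intact up to a bounded factor.

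The main obstacle is a triple balancing act in the construction: keeping the curve graph translation length small as $g$ grows, preserving pseudo-Anosovness of the composition of squared twists (Penner-type positivity together with filling are needed here), and ensuring simultaneously that the integral symplectic image is nontrivial while the mod-$2$ image is trivial. The specific constants $577$ and $1152$ arise from this bookkeeping: $577$ records the topological overhead of the base piece of the weaving needed to guarantee the desired pseudo-Anosov and homological properties, while $1152$ bundles the translation-length constant of the base construction together with the factor of $2$ paid for working with squared twists in order to sit inside $\Mod(S_g)[2]$.
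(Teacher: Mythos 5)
Your reduction of ``not a normal generator'' to membership in a proper normal subgroup strictly containing $\I_g$ is sound, and it is in fact parallel to what the paper does: the paper's $f_{g+1}$ has normal closure inside $\llangle T_{p^{-1}(\beta)}, T_{p^{-1}(\alpha)}\rrangle$, which lies in the kernel of $\Mod(S_{g+1}) \to \Aut(H_1(S_{g+1};\Z/g\Z))$, a congruence-type proper normal subgroup; your level-$2$ variant with squared twists would serve the same purpose, and Penner's criterion does tolerate squared twists, so pseudo-Anosovness and the homological conditions are not the problem.

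The genuine gap is the quantitative heart of the theorem: you never actually produce a pseudo-Anosov with $\ell_{\C}(f_g) \le 1152/(g-577)$, nor any mechanism that would force $\ell_{\C}$ to decay like $1/g$ at all. ``Standard subsurface projection and coarse Lipschitz arguments'' give no decay by themselves; a coarse Lipschitz bound controls $\ell_{\C}$ from above only by a constant, and the specific constants $577$ and $1152$ cannot be recovered from unspecified bookkeeping. What the paper does here is concrete and is exactly the step your weaving sketch leaves blank: it fixes a pseudo-Anosov $f = T_{\beta} T_{\varphi\beta}^{-1} T_{\varphi\alpha}^{-1}$ on $S_2$ (with $\varphi = T_{\lambda}T_{\beta}^{-1}$ Torelli), lifts it to the degree-$g$ cyclic cover $S_{g+1}$ dual to $\alpha$, and then tracks how the lift $\tilde\alpha$ of $\alpha$ spreads across the $g$ fundamental pieces $X_1,\dots,X_g$: one application of $\tilde f$ moves $\tilde\alpha$ into at most $i(\varphi\alpha,\alpha) + i(\varphi\beta,\alpha) = 144 + 432 = 576$ additional pieces, so for $m$ with $576m + 1 \le g$ the curves $\tilde\alpha$ and $\tilde f^m\tilde\alpha$ miss a common essential curve and $d_{\C}(\tilde\alpha, \tilde f^m\tilde\alpha) \le 2$, whence $\ell_{\C}(\tilde f) \le 2/m \le 1152/(g-576)$ on $S_{g+1}$ (i.e.\ $1152/(g-577)$ in genus $g$). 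The explicit intersection numbers $i(\xi,\beta) = 6$, $i(\lambda,\beta) = 36$, $i(\varphi\alpha,\alpha)=144$, $i(\varphi\beta,\alpha)=432$ are where the constants come from. Without an analogous explicit construction and spreading (or chain-type) estimate, your proposal establishes only the qualitative statement ``there exist non-Torelli pseudo-Anosovs that are not normal generators,'' not the stated bound on $\ell_{\C}$.
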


Indeed, Baik and Shin \cite{BaikShin_Torelli} showed that there exists $c > 1$ such that 
$$\frac{1}{c \cdot g} \le \inf \{ \ell_{\C}(f) : f \in \I_g \text{ is pseudo-Anosov} \} \le \frac{c}{g}$$
for all $g \ge 2$. The condition $f_g \notin \I_g$ in the above theorem says that elements of the Torelli group are not the only obstruction for pseudo-Anosovs with small $\ell_{\C}(\cdot)$ to be normal generators.

We note that while both $\T(S_g)$ and $\C(S_g)$ are metric spaces on which $\Mod(S_g)$ naturally acts, translation lengths measured on them behave quite differently. For instance, Bader recently showed in \cite{bader2025comparing} that for each $g \ge 2$, there exists a sequence of pseudo-Anosovs $f_n \in \Mod(S_g)$ such that
$$\lim_{n \to \infty} \ell_{\T}(f_n) = \infty \quad \text{and} \quad \ell_{\C}(f_n) \le \frac{1}{g-1} \text{ for all } n \ge 1.$$

This chapter is devoted to the study of the relation between normal generators of mapping class groups and translation lengths on Teichm\"uller spaces and curve graphs. In the rest of this chapter, we discuss some key ideas in the proofs of  the theorems introduced above.
In the last section, we also discuss some further questions which are wide open.

\subsection*{Organization} In Section \ref{sec.MCG}, we review structural aspects of mapping class groups. Section \ref{sec:LMcriterion} is devoted Lanier--Margalit's well-suited criterion and its generalization, which is crucial in the study of normal generation of mapping class groups. We discuss the relation between translation lengths of Teichm\"uller spaces and normal generation in Section \ref{sec.smalltrlength}. In Section \ref{sec.curvegraph}, we move on to the discussion of asymptotic translation lengths on curve graphs. We record various further questions in Section \ref{sec.question}.

\subsection*{Acknowledgements} We would like to extend our gratitude to Chenxi Wu for his valuable collaboration with the authors on the primary work discussed in this chapter. Baik was supported by the National Research Foundation of Korea(NRF) grant funded by the Korea government(MSIT) RS-2025-00513595.

\section{Mapping class groups} \label{sec.MCG}

By a surface\index{surface}, we mean a connected oriented surface of finite genus, possibly with finitely many punctures, and we further assume that its Euler characteristic is negative. The major object of this chapter is the mapping class group of a surface, and this section is devoted to a brief overview of mapping class groups. We refer to \cite{FM_primer} and \cite{Minsky_book} for comprehensive references.

\begin{definition}[Mapping class group and pure mapping class group]
    Let $S$ be a surface. The \emph{mapping class group}\index{mapping class group} $\Mod(S)$ of $S$ is defined as the group of isotopy classes of orientation preserving homeomorphisms: $$\Mod(S) := \Homeo^+(S)/\operatorname{Isotopy}.$$
    The \emph{pure mapping class group}\index{pure mapping class group} $\PMod(S)$ is the subgroup of $\Mod(S)$ consisting of elements fixing each puncture:
    $$\PMod(S) := \{f \in \Mod(S) : f \text{ fixes each puncture of } S\}.$$
\end{definition}

We call elements of $\Mod(S)$ and $\PMod(S)$ mapping classes\index{mapping class group!mapping class} and pure mapping classes\index{pure mapping class group!pure mapping class}. Note that if the number of punctures in $S$ does not exceed one (i.e., $S$ is closed or once-punctured), then $\Mod(S) = \PMod(S)$.

\begin{example}[Dehn twists, \cite{Dehn_twist}] \label{ex.Dehn}
    One example of a pure mapping class is a Dehn twist\index{Dehn twist}. Intuitively, a Dehn twist is constructed by cutting the surface along a simple closed curve, rotating the surface along the curve, and then regluing it. To be precise, let $A = \{r e^{i\theta} : r \in [1, 3], \theta \in [0, 2 \pi]\}$ be an annulus in the complex plane equipped with the standard orientation on it. Let $h : A \to A$ be an orientation preserving homeomorphism defined as $$h(r e^{i\theta}) = r e^{i \theta} e^{- i \pi (r-1)}.$$
    See Figure \ref{fig.twistann}.

    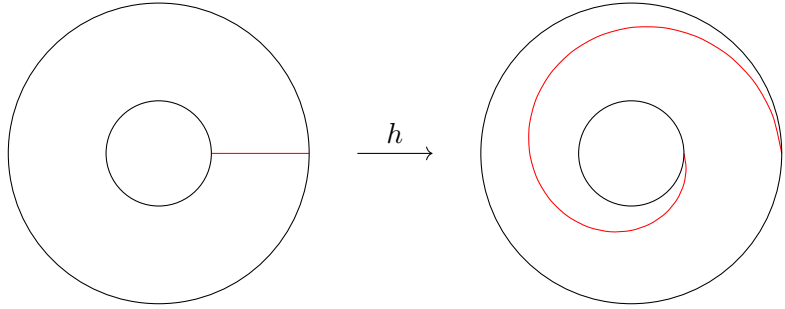
\begin{figure}[h]
    \begin{tikzpicture}[scale=2]

		
		\draw[red,domain=0:1,smooth,variable=\t] plot ({(1-0.65*\t)},{0}); 
		

		\draw (0,0) circle (0.35);
		\draw (0,0) circle (1);
		\end{tikzpicture}
        \begin{tikzpicture}
            \draw (0, -2);
            \draw (0, 2);
            \draw (-1, 0);
            \draw (1, 0);
            \draw[->] (-0.5, 0) -- (0.5, 0);
            \draw (0, 0) node[above] {$h$};
        \end{tikzpicture}
		\begin{tikzpicture}[scale=2]

		\draw[red,domain=0:1,smooth,variable=\t] plot ({(1-0.65*\t)*cos(360*\t)},{(1-0.65*\t)*sin(360*\t)});

		\draw (0,0) circle (0.35);
		\draw (0,0) circle (1);
		\end{tikzpicture}
  \caption{Twist $h$ on an annulus} \label{fig.twistann}

    \end{figure}

    For a simple closed curve $c$ on $S$, the Dehn twist $T_c$ along $c$ is a mapping class defined as follows: let $A_c \subset S$ be a neighborhood of $c$ equipped with an orientation-preserving homeomorphism $f_c : A_c \to A$ that maps $c$ to a circle $\{2 e^{i \theta} : \theta \in [0, 2 \pi] \}$. 
    Then $f_c^{-1} \circ  h \circ f_c : A_c \to A_c$ is an orientation preserving homeomorphism whose restriction on $\partial A_c$ is the identity. We then extend $f_c^{-1} \circ  h \circ f_c$ to the entire surface $S$. The mapping class of this extension is the \emph{Dehn twist} $T_c$ along $c$. See Figure \ref{fig.dehntwistalongc}. As in Theorem \ref{thm.dehnlickorish} below, Dehn twists form a finite generating set of $\PMod(S)$.

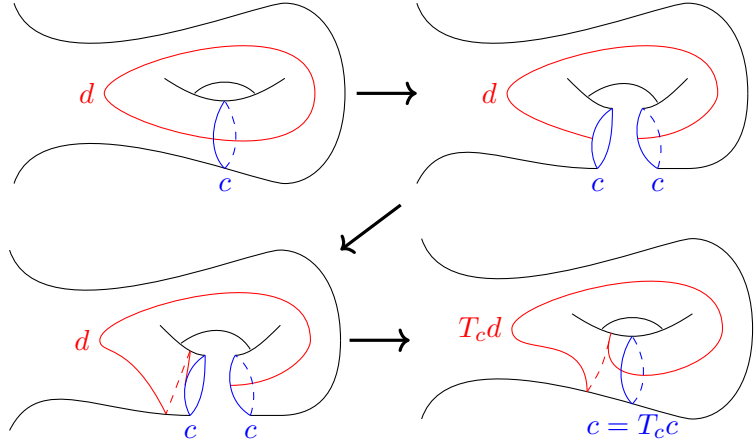
\begin{figure}[h]
	 \begin{tikzpicture}[scale=0.4, every node/.style={scale=1}]

	\draw (-5, 3) .. controls (-4, 0) and (3, 3) .. (4, 3) .. controls (5, 3)  and (6, 2) .. (6, 0) .. controls (6, -2) and (5, -3) .. (4, -3) .. controls (3, -3) and (-4, 0) .. (-5, -3);
	
	\draw (1, 0) .. controls (1.5, 0.5) and (2.5, 0.5) .. (3, 0);
	\draw (0, 0.5) .. controls (1.5, -0.5) and (2.5, -0.5) .. (4, 0.5);
	
	\draw[red] (-2, 0) .. controls (-2, 1) and (5, 3) .. (5, 0) .. controls (5, -3) and (-2, -1) .. (-2, 0);
	\draw[red] (-2, 0) node[left] {$d$};

	\draw[blue] (2, -2.5) .. controls (1.5, -2) and (1.5, -1) .. (2, -0.25);
	\draw[dashed, blue] (2, -2.5) .. controls (2.5, -2) and (2.5, -1) .. (2, -0.25);
	\draw[blue] (2, -2.5) node[below] {$c$};
	\end{tikzpicture} \begin{tikzpicture}[scale=0.4, every node/.style={scale=1}]
	
	\draw[->, very thick] (-7, 0) -- (-5, 0);
	
	\draw (-5, 3) .. controls (-4, 0) and (3, 3) .. (4, 3) .. controls (5, 3)  and (6, 2) .. (6, 0);
	
	\draw (6, 0) .. controls (6, -2) and (5, -2.5) .. (4, -2.5) .. controls (3.5, -2.5) and (3, -2.5) .. (3, -2.5);
	\draw (1, -2.5) .. controls (-2, -2.5) and (-4, -1) .. (-5, -3);
	
	\draw (0.7, -0.1) .. controls (1.5, 0.5) and (2.5, 0.5) .. (3, -0.3);
	
	\draw (0, 0.5) .. controls (0.5, 0) and  (1, -0.5) .. (1.5, -0.5);
	\draw (2.5, -0.5) .. controls (3, -0.5) and (3.5, 0) .. (4, 0.5);

	\draw[red] (0.83, -1.5) .. controls (-1, -1) and (-2, -0.5) .. (-2, 0) .. controls (-2, 1) and (5, 3) .. (5, 0) .. controls (5, -0.5) and (4, -1.5) .. (2.35, -1.5);
	\draw[red] (-2, 0) node[left] {$d$};
	
	\draw[blue] (1.5, -0.5) .. controls (0.7, -1) and (0.7, -2) .. (1, -2.5);
	\draw[blue] (1, -2.5) .. controls (1.5, -2) and (1.5, -1) .. (1.5, -0.5);
	\draw[blue] (1, -2.5) node[below] {$c$};
	
	\draw[blue] (3, -2.5) .. controls (2.2, -2) and (2.2, -1) .. (2.5, -0.5);
	\draw[dashed, blue] (3, -2.5) .. controls (3.2, -2) and (3.2, -1) .. (2.5, -0.5);
	\draw[blue] (3, -2.5) node[below] {$c$};
	
	\end{tikzpicture}
	\begin{tikzpicture}[scale=0.4, every node/.style={scale=1}]
	
	\draw[->, very thick] (8, 4.5) -- (6, 3);

	\draw (-5, 3) .. controls (-4, 0) and (3, 3) .. (4, 3) .. controls (5, 3)  and (6, 2) .. (6, 0);
	
	\draw (6, 0) .. controls (6, -2) and (5, -2.5) .. (4, -2.5) .. controls (3.5, -2.5) and (3, -2.5) .. (3, -2.5);
	\draw (1, -2.5) .. controls (-2, -2.5) and (-4, -1) .. (-5, -3);
	
	\draw (0.7, -0.1) .. controls (1.5, 0.5) and (2.5, 0.5) .. (3, -0.3);
	
	\draw (0, 0.5) .. controls (0.5, 0) and  (1, -0.5) .. (1.5, -0.5);
	\draw (2.5, -0.5) .. controls (3, -0.5) and (3.5, 0) .. (4, 0.5);

	\draw[red] (0.2, -2.45) .. controls (-1, 0) and (-2, -0.5) .. (-2, 0) .. controls (-2, 1) and (5, 3) .. (5, 0) .. controls (5, -0.5) and (4, -1.5) .. (2.35, -1.5);
	
	\draw[red] (1, -0.35) .. controls (1, -0.5) .. (0.83, -1.5);
	
	\draw[red, dashed] (1, -0.35) -- (0.2, -2.45);
	\draw[red] (-2, 0) node[left] {$d$};
	
	\draw[blue] (1.5, -0.5) .. controls (0.7, -1) and (0.7, -2) .. (1, -2.5);
	\draw[blue] (1, -2.5) .. controls (1.5, -2) and (1.5, -1) .. (1.5, -0.5);
	\draw[blue] (1, -2.5) node[below] {$c$};
	
	\draw[blue] (3, -2.5) .. controls (2.2, -2) and (2.2, -1) .. (2.5, -0.5);
	\draw[dashed, blue] (3, -2.5) .. controls (3.2, -2) and (3.2, -1) .. (2.5, -0.5);
	\draw[blue] (3, -2.5) node[below] {$c$};
	
	\draw[->, very thick] (6.3, 0) -- (8.3, 0);
	
	\end{tikzpicture} \begin{tikzpicture}[scale=0.4, every node/.style={scale=1}]

	\draw (-5, 3) .. controls (-4, 0) and (3, 3) .. (4, 3) .. controls (5, 3)  and (6, 2) .. (6, 0) .. controls (6, -2) and (5, -3) .. (4, -3) .. controls (3, -3) and (-4, 0) .. (-5, -3);
	
	\draw (1, 0) .. controls (1.5, 0.5) and (2.5, 0.5) .. (3, 0);
	\draw (0, 0.5) .. controls (1.5, -0.5) and (2.5, -0.5) .. (4, 0.5);

	\draw[red] (0.5, -2.1) .. controls (0.5, 0) and (-2, -1) .. (-2, 0) .. controls (-2, 1) and (5, 3) .. (5, 0) .. controls (5, -1.5) and (0.5, -2.5) .. (1.3, -0.14);
	\draw[red] (-2, 0) node[left] {$T_{c}d$};
	
	\draw[red, dashed] (0.5, -2.1) .. controls (1, -1.3) .. (1.3, -0.14);
	
	\draw[blue] (2, -2.5) .. controls (1.5, -2) and (1.5, -1) .. (2, -0.25);
	\draw[dashed, blue] (2, -2.5) .. controls (2.5, -2) and (2.5, -1) .. (2, -0.25);
	\draw[blue] (2, -2.5) node[below] {$c= T_{c} c$};
	
	\end{tikzpicture}
 \caption{Dehn twist along $c$} \label{fig.dehntwistalongc}
\end{figure}

Note that two isotopic simple closed curves define the same Dehn twist. Hence a Dehn twist can be regarded to be along an isotopy class of a simple closed curve.
A simple closed curve on $S$ is called \emph{essential}\index{simple closed curve!essential} if it is not homotopic to a point or a puncture. One can see that the Dehn twist $T_c$ is non-trivial if and only if $c$ is essential.
    It is easy to see that for $f \in \Mod(S)$, we have $$T_{f(c)} = f T_c f^{-1}.$$
\end{example}

\begin{example}[Multitwists]
    Let $c_1, \cdots, c_k$ be disjoint simple closed curves on $S$. Their union $c := \cup_{i = 1}^k c_i$ is called a multicurve on $S$.
    The \emph{multitwist}\index{multitwist} along the multicurve $c$ is defined as the composition
    $$T_c := T_{c_1} \cdots T_{c_k}.$$
\end{example}
Since each Dehn twist $T_{c_i}$ is supported in a neighborhood of $c_i$, it follows from the disjointness of $c_1, \cdots, c_k$ that $T_{c_1}, \cdots, T_{c_k}$ commute. Hence, the above composition is well-defined, meaning that the product does not depend on the labeling of $c_1, \cdots, c_k$.

Dehn twists are not only the simplest mapping classes, but also the fundamental. It is a classical theorem of Dehn \cite{Dehn_generators} and Lickorish \cite{Lickorish_generators} that $\PMod(S)$ is generated by finitely many Dehn twists. To state the theorem in a more informative way, we introduce some notions. 
An essential simple closed curve is called \emph{non-separating}\index{simple closed curve!non-separating} if its complement is connected, and \emph{separating}\index{simple closed curve!separating} otherwise. For two simple closed curves $c, d$, their \emph{geometric intersection number}\index{intersection number!geometric intersection number} is defined as $$i(c, d) := \inf_{c'\sim c, d'\sim d} \# c \cap d$$
where the infimum is over all simple closed curves $c'$ and $d'$ isotopic to $c$ and $d$ respectively.

\begin{theorem}[Dehn, Lickorish]\index{Dehn--Lickorish theorem} \label{thm.dehnlickorish}
    There exist finitely many non-separating simple closed curves $c_1, \cdots, c_k$ such that $i(c_i, c_j) \le 1$ for all $i, j \in \{1, \cdots k\}$ and Dehn twists $T_{c_1}, \cdots, T_{c_k}$ generate $\PMod(S)$.
\end{theorem}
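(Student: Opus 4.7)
The plan is to argue by induction on the complexity $\xi(S) := 3g - 3 + n$, where $g$ is the genus and $n$ the number of punctures, following the classical strategy of Lickorish. Throughout, let $G \le \PMod(S)$ denote the subgroup generated by the finite family of Dehn twists we are trying to identify; the goal is $G = \PMod(S)$. The first preparatory step is a connectivity lemma: the graph $\cN_1(S)$ whose vertices are isotopy classes of non-separating simple closed curves and whose edges join pairs with geometric intersection number exactly one is connected. I would prove this by a double induction on $i(a,b)$ for two curves $a,b$ in minimal position---when $i(a,b) = 0$, non-separability of $a$ and $b$ lets one construct a curve $c$ with $i(a,c) = i(b,c) = 1$ inside the complement of $a \cup b$; when $i(a,b) \ge 2$, a surgery at an intersection point yields a non-separating curve $a'$ with $i(a, a') \le 1$ and $i(a', b) < i(a,b)$, and one iterates.

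The second step is to combine the connectivity lemma with the conjugation identity $T_{f(c)} = f T_c f^{-1}$ to show that $G$ in fact contains every Dehn twist along a non-separating simple closed curve, and hence acts transitively on the vertices of $\cN_1(S)$. Indeed, if $a, b$ are adjacent in $\cN_1(S)$ and $T_a, T_b \in G$, then $T_a(b)$ is non-separating with $i(a, T_a(b)) = 1$ and $T_{T_a(b)} = T_a T_b T_a^{-1} \in G$; propagating this along paths in $\cN_1(S)$ from an initial twist in $G$ yields the claim. For the inductive step proper, given $f \in \PMod(S)$, one multiplies by an element of $G$ so that the product fixes a chosen non-separating curve $c$ from the family. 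The cutting/capping exact sequence
\begin{equation*}
1 \longrightarrow \langle T_c \rangle \longrightarrow \operatorname{Stab}_{\PMod(S)}(c) \longrightarrow \PMod(S_c) \longrightarrow 1,
\end{equation*}
where $S_c$ is obtained from $S$ by cutting along $c$ and filling in the two new boundary components with punctures, then reduces the problem to $\PMod(S_c)$. Since $\xi(S_c) < \xi(S)$, the inductive hypothesis applies, and the resulting generators lift to Dehn twists along non-separating simple closed curves on $S$ (non-separability is preserved under the lift because $c$ is disjoint from any curve taken from $S_c$).

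The main obstacle I anticipate is combinatorial bookkeeping rather than any single hard idea: one must ensure that the union of an initial seed twist, the conjugates produced in Step 2, and the twists lifted from $\PMod(S_c)$ can all be realized simultaneously as a single family of curves with pairwise geometric intersection at most one, not merely as \emph{some} finite generating set. The cleanest way to avoid this is to fix an explicit model family from the outset---the Humphries system of $2g+1$ curves in the closed case, together with its natural analogues accommodating punctures---and to verify the pairwise intersection bound by direct inspection, then run the entire induction with this explicit family as the prescribed target. The base case is the once-punctured torus ($\xi(S) = 1$), where $\PMod(S) \cong \operatorname{SL}_2(\ZZ)$ is classically generated by the two Dehn twists along a meridian and a longitude meeting once.
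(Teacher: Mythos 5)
The paper itself does not prove this theorem; it is quoted as classical with citations to Dehn and Lickorish, so your proposal can only be judged on its own terms. It follows the standard Lickorish/Farb--Margalit strategy, but as written it has two genuine gaps.

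First, Step 2 is circular. From $T_a, T_b \in G$ with $i(a,b)=1$ you may conclude that $G$ contains twists about curves in the $G$-orbit of your chosen family (via $T_{h(c)} = h T_c h^{-1}$), but ``propagating along paths in $\cN_1(S)$'' does not work: if $a_0, a_1, a_2, \ldots$ is a path with only $T_{a_0}, T_{a_1} \in G$, nothing provides an element of $G$ carrying $a_1$ to $a_2$, so $T_{a_2} \in G$ is precisely the kind of statement you are trying to prove. Connectivity of $\cN_1(S)$ shows that the group generated by \emph{all} nonseparating twists acts transitively on nonseparating curves (using products $T_{c_i}T_{c_{i+1}}$ along a path, which requires all those twists to be available); it does not show that your finite $G$ acts transitively. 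That transitivity --- equivalently, that $G$ contains every nonseparating twist --- is the crux of finite generation and needs a genuine argument, e.g.\ first proving generation by the infinite set of all nonseparating twists and then reducing to an explicit family such as the Humphries curves via change of coordinates and relations; this is also where your deferred ``bookkeeping'' lives, and it is a real step rather than an afterthought.

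Second, the induction scheme does not reach all surfaces. Cutting along a nonseparating curve sends $S_{g,n}$ to $S_{g-1,n+2}$, so from the sole base case $S_{1,1}$ you can never reach $S_{1,n}$ for $n \ge 2$ (nor the closed torus); and you cannot cut at genus one, because the inductive statement is vacuous (indeed false) on punctured spheres, which have no nonseparating curves at all. The standard remedy is a second induction on the number of punctures via the Birman exact sequence, together with the fact that a point-push along a nonseparating simple loop is a product of two twists about nonseparating curves; this ingredient is missing from your outline. A smaller, fixable point: in your cutting sequence the full stabilizer of $c$ in $\PMod(S)$ does not surject onto $\PMod(S_c)$ with kernel $\langle T_c \rangle$ as written, since elements reversing the orientation of $c$ swap the two new punctures; you need the orientation-preserving stabilizer, and correspondingly you must arrange that your modified mapping class fixes $c$ together with its orientation.
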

We note that while Dehn and Lickorish had different approaches, the statement of finite generation by Dehn twists along non-separating simple closed curves is referred to as the Dehn--Lickorish theorem. The precise picture of curves $c_1, \cdots, c_k$ satisfying Theorem \ref{thm.dehnlickorish} can be found in \cite[Fig. 1]{Lickorish_generators}.

Thurston classified mapping classes into three categories \cite{Thurston_construction}.

\begin{theorem}[Thurston's classification]\index{Thurston's classification} \label{thm.ntclass}
    Let $f \in \Mod(S)$. Then one of the following holds:
    \begin{enumerate}
        \item $f$ is \emph{periodic}\index{mapping class!periodic}, i.e., $f$ is of finite order.
        \item $f$ is \emph{reducible}\index{mapping class!reducible}, i.e., $f$ fixes an isotopy class of a multicurve.
        \item $f$ is \emph{pseudo-Anosov}\index{mapping class!pseudo-Anosov}, i.e., there exist a representative $f_0$ of the isotopy class $f$ and a pair of transverse measured foliations that are invariant under $f_0$ and their transverse measures are multiplied by $\la_f$ and $1/\la_f$ respectively for some $\la_f > 1$. In this case, the constant $\la_f$ is called the stretch factor\index{stretch factor} of $f$.
    \end{enumerate}
\end{theorem}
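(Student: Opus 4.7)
The plan is to use the Thurston compactification of Teichm\"uller space together with the Brouwer fixed point theorem, following Thurston's original argument. Recall that $\overline{\T(S)} := \T(S) \cup \mathcal{PMF}(S)$, where $\mathcal{PMF}(S)$ is the space of projective measured foliations, is homeomorphic to a closed Euclidean ball, and the $\Mod(S)$-action on $\T(S)$ extends continuously to $\overline{\T(S)}$ via the geometric intersection pairing. Given $f \in \Mod(S)$, the Brouwer fixed point theorem applied to $f$ acting on $\overline{\T(S)}$ produces at least one fixed point, and the three Nielsen--Thurston types will correspond to the three possible locations and structures of such fixed points.

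First, if $f$ fixes a point $X \in \T(S)$, then $f$ lies in $\Stab{\Mod(S)}{X}$, which is finite since $\Mod(S)$ acts properly discontinuously on $\T(S)$; hence $f$ is periodic. Next, suppose $f$ has no fixed point in $\T(S)$ but fixes some non-arational $[\mathcal{F}] \in \mathcal{PMF}(S)$ with $f \cdot \mathcal{F} = \lambda \mathcal{F}$. Then the set $\Sigma := \{[c] \in \C(S) : i(c, \mathcal{F}) = 0\}$ is a non-empty, finite collection of isotopy classes of essential simple closed curves (arising from annular components of $\mathcal{F}$ or from saddle-connection boundaries of minimal components), and $\Sigma$ is $f$-invariant because both $i(\cdot, \cdot)$ and $[\mathcal{F}]$ are. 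A suitable power of $f$ therefore fixes every element of $\Sigma$, exhibiting $f$ as reducible.

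The remaining case is that every fixed point of $f$ in $\overline{\T(S)}$ is an arational class in $\mathcal{PMF}(S)$. Applying Brouwer to $f^{-1}$ yields a second arational fixed class $[\mathcal{G}]$ with $f \cdot \mathcal{G} = \mu \mathcal{G}$. The principal obstacle, and the heart of the theorem, is to show that $(\mathcal{F}, \mathcal{G})$ forms a transverse filling pair with $\lambda \mu = 1$ and $\lambda > 1$, and that from this pair one can construct a homeomorphism representative $f_0$ of $f$ stretching the transverse measures of $\mathcal{F}$ and $\mathcal{G}$ by $\lambda$ and $\lambda^{-1}$ respectively. Transversality and filling follow from arationality: if $i(\mathcal{F}, \mathcal{G})$ vanished, an intersection-pairing argument would produce a simple closed curve of zero intersection with $\mathcal{F}$, contradicting that $\mathcal{F}$ is arational. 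The reciprocal scaling $\lambda \mu = 1$ falls out of the $f$-invariance of the area form $i(\mathcal{F}, \mathcal{G})$, and $\lambda \neq 1$ is forced by the absence of a fixed point in $\T(S)$: otherwise $f$ would preserve the singular flat structure determined by $(\mathcal{F}, \mathcal{G})$ isometrically and thereby fix a point of $\T(S)$ along the associated Teichm\"uller disk, reducing to the periodic case. Building standard local charts around the zeros of $(\mathcal{F}, \mathcal{G})$ then upgrades this data to a genuine pseudo-Anosov representative with stretch factor $\lambda_f = \lambda$.
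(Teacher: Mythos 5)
First, a point of comparison: the paper does not prove this theorem at all --- it is quoted as a classical result with references to Nielsen and Thurston --- so your sketch can only be measured against the standard proofs (Thurston's, as in Fathi--Laudenbach--Po\'enaru or Farb--Margalit, or Bers' analytic proof). Your outline does follow the standard Brouwer-plus-compactification skeleton, but it has two genuine gaps. The smaller one is in the reducible case: the set $\Sigma = \{[c] : i(c,\mathcal{F}) = 0\}$ is in general \emph{infinite} when $\mathcal{F}$ is not arational (take $\mathcal{F}$ a weighted simple closed curve; every curve disjoint from it lies in $\Sigma$), so your finiteness claim fails as stated. The correct object is the canonical finite multicurve extracted from $\mathcal{F}$ --- the core curves of its annular components together with the boundary curves of the supports of its minimal components --- which is $f$-invariant as a whole; note also that the paper's definition of reducible only requires the multicurve to be fixed as a set, so no passage to a power is needed. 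This is fixable, but as written the step is wrong.

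The serious gap is in the pseudo-Anosov case. ``Applying Brouwer to $f^{-1}$'' produces nothing new: a homeomorphism and its inverse have the same fixed-point set in $\overline{\mathcal{T}(S)}$, so you have not exhibited a second fixed class $[\mathcal{G}]$ distinct from $[\mathcal{F}]$, and your transversality argument collapses in the degenerate case $\mathcal{G} = \mathcal{F}$, since $i(\mathcal{F},\mathcal{F}) = 0$ yields no simple closed curve of zero intersection and hence no contradiction with arationality. Relatedly, the case of an arational fixed class with scaling factor $\lambda = 1$ is not addressed at all; in the standard proof this case requires a real argument (it leads back to periodicity), and the existence of the second, distinct fixed foliation with reciprocal scaling is obtained by a limiting argument using iteration of $f^{-1}$ on $\mathcal{PMF}(S)$ and the arationality/unique-ergodicity structure of $\mathcal{F}$ --- not by Brouwer. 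Finally, ``building standard local charts around the zeros'' compresses the substantial construction (via Hubbard--Masur/quadratic differentials or Markov partitions) of an actual homeomorphism representative stretching the two measures by $\lambda$ and $\lambda^{-1}$; this is the heart of the theorem and cannot be treated as routine. So the proposal reproduces the well-known strategy in outline, but the steps that make the theorem hard are either missing or incorrect as stated.
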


A more detailed discussion on measured foliations and transverse measures is required to precisely understand what pseudo-Anosov means. However, it is enough for us to know the classification theorem and the fact that no power of a pseudo-Anosov mapping class fixes an isotopy class of a simple closed curve, in contrast to reducible mapping classes.

\subsection*{Normal subgroups of $\PMod(S)$}
As mentioned in the introduction, we are interested in normal subgroups.
Given a group $G$, one way to produce its normal subgroup is to consider the commutator subgroup of $G$. For $h, k \in G$, we set $$[h, k] := h k h^{-1}k^{-1} \in G,$$ their commutator\index{commutator}. More generally, for a subset $H \subset G$, we denote $$[H, H]$$ the subgroup of $G$ generated by commutators of elements of $H$. The subgroup $$[G, G] < G$$ is the commutator subgroup\index{commutator subgroup} of $G$. It is easy to see that the commutator subgroup is a normal subgroup. The quotient $G / [G, G]$ is abelian and is called the abelianization\index{abelianization} of $G$.
The group $G$ is called perfect\index{perfect} if $$[G, G] = G.$$
Note that the group $G$ is perfect if and only if $G$ does not admit any surjective homomorphism to a non-trivial abelian group.

Harer proved that $\PMod(S)$ is perfect if the genus of $S$ is at least three \cite{Harer_perfect}. This is a useful tool for checking whether a given (pure) mapping class is a normal generator, as we will see later.

\begin{theorem}[Harer] \label{thm.harer}
    If the genus of $S$ is at least three, then $$[\PMod(S), \PMod(S)] = \PMod(S).$$
\end{theorem}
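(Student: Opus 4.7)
The plan is to show that the abelianization $\PMod(S)^{\mathrm{ab}}$ is trivial when $S$ has genus $g \geq 3$. By Theorem \ref{thm.dehnlickorish}, the group $\PMod(S)$ is generated by finitely many Dehn twists along non-separating simple closed curves, so it suffices to prove that every such Dehn twist maps to zero in $\PMod(S)^{\mathrm{ab}}$.

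First I would invoke the change of coordinates principle: any two non-separating simple closed curves on $S$ can be sent one to the other by some element of $\PMod(S)$, since such a homeomorphism can always be realized in the complement of the punctures. Combined with the conjugation formula $T_{f(c)} = f T_c f^{-1}$ recorded in Example \ref{ex.Dehn}, this shows that all Dehn twists along non-separating simple closed curves are conjugate in $\PMod(S)$, and therefore share a common image $t \in \PMod(S)^{\mathrm{ab}}$.

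Next, the key input is the \emph{lantern relation}. Inside a four-holed sphere $\Sigma_{0,4}$ embedded in $S$ there is an identity of the form
$$T_{a_1}T_{a_2}T_{a_3}T_{a_4} = T_{x_1}T_{x_2}T_{x_3},$$
where $a_1,\ldots,a_4$ are the boundary components and $x_1,x_2,x_3$ are three specific simple closed curves in the interior of $\Sigma_{0,4}$. The crucial point is to arrange the embedding so that all seven of these curves are non-separating in $S$; this is precisely where the hypothesis $g \geq 3$ enters, as three or more handles give enough room for each boundary component of $\Sigma_{0,4}$ as well as each interior lantern curve to fail to separate $S$. Applying the abelianization map to the lantern identity then yields $4t = 3t$, hence $t = 0$.

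Combining the two steps, every generating Dehn twist lies in $[\PMod(S),\PMod(S)]$, and therefore $\PMod(S)$ is perfect. The principal obstacle in this strategy is the topological bookkeeping in the third step: one must carefully place the four-holed sphere and its interior curves so that no one of the seven curves separates the ambient surface, and verify that the required configuration genuinely exists whenever $g \geq 3$.
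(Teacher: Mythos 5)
Your proposal is correct, but note that the paper itself contains no proof of this statement: it is quoted as Harer's theorem with a citation, so there is no internal argument to compare against. What you give is the classical proof of perfectness (due to Powell for closed surfaces, and presented in Farb--Margalit's primer): Dehn twists about non-separating curves generate $\PMod(S)$ by Theorem \ref{thm.dehnlickorish}; change of coordinates plus the relation $T_{f(c)}=fT_cf^{-1}$ makes all such twists conjugate, hence equal to a single class $t$ in the abelianization; and a lantern relation embedded with all seven curves non-separating gives $4t=3t$, so $t=0$. The two points you flag are indeed the only ones needing care, and both go through: the conjugating homeomorphism can be chosen in $\PMod(S)$ (after cutting along the curve one may further adjust by a mapping class supported in a disk containing the punctures and disjoint from the curve, so the punctures are fixed individually), and for $g\ge 3$ one can embed a four-holed sphere in the complement of the punctures so that each boundary curve and each interior lantern curve has connected complement in $S$; this is exactly where the genus hypothesis is used, and it fails for $g\le 2$, consistent with the nontrivial abelianizations in low genus. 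So your argument is a complete and standard proof of the quoted theorem, somewhat more self-contained than the paper, which simply defers to the literature.
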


We also note a well-known fact that $\Mod(S)$ has a trivial center if $S$ is neither a closed surface of genus two nor a torus with one or two punctures. When $S$ is one of those exceptional surfaces, the center of $\Mod(S)$ is isomorphic to $\Z / 2\Z$, consisting of a hyperelliptic involution \cite[Section 3.4]{FM_primer}.

\subsection*{Action on homology groups} We consider the first homology group $H_1(S) := H_1(S; \Z)$\index{homology group}. The mapping class group $\Mod(S)$ has a natural action on $H_1(S)$. We describe how Dehn twists act on $H_1(S)$.

To do this, we introduce the notion of algebraic intersection number\index{intersection number!algebraic intersection number}. For $c, c' \in \pi_1(S)$, their \emph{algebraic intersection number} $\hat{i}(c, c')$ is defined as the sum of the indices of the intersection points of $c$ and $c'$, where the index of an intersection point is $1$ if the orientation of $c$ and $c'$ at the intersection coincides with the orientation of the surface, and $-1$ otherwise. Note that $\hat{i}(c, c')$ depends only on the homology classes of $c$ and $c'$ in $H_1(S)$, hence we use the same notation $\hat{i}( \cdot, \cdot)$ when it concerns homology classes.

Let $c$ be an essential oriented simple closed curve. Then one can observe that the action of the Dehn twist $T_c$ on $H_1(S)$ is as follows: for $x \in H_1(S)$,
\begin{equation} \label{eqn.Dehnhomology}
T_c(x) = x + \hat{i}(c, x)[c].
\end{equation}
Note that while the definition of Dehn twist does not involve the choice of an orientation on a simple closed curve, we let $c$ be oriented in this discussion. This is only for considering the homology class of $[c]$. Indeed, the above expression does not depend on the orientation on $c$.

The $\Mod(S)$-action on the homology group gives another example of a normal subgroup of $\Mod(S)$, which is in fact a normal subgroup of $\PMod(S)$.

\begin{definition}[Torelli group]\index{Torelli group}
    Let $S = S_{g, n}$ be a surface of genus $g$ with $n$  punctures. The \emph{Torelli group} $\I_{g,n }$ is the subgroup of $\PMod(S)$ consisting of elements that act trivially on the first homology $H_1(S)$. 
\end{definition}

In other words, the Torelli group $\I_{g, n}$ is the kernel of the canonical homomorphism $\PMod(S) \to \Aut(H_1(S))$, hence it is a normal subgroup. An element $f \in \PMod(S)$ is called Torelli, or a Torelli element, if $f \in \I_{g, n}$. Note that the Torelli group is non-trivial and a proper subgroup of $\PMod(S)$. Indeed, it follows from  \eqref{eqn.Dehnhomology} that for an essential simple closed curve $c$, the Dehn twist $T_c$ is Torelli if and only if $c$ is separating. Thurston also showed that there are pseudo-Anosov Torelli elements \cite{Thurston_construction}.

\subsection*{Penner's construction of pseudo-Anosovs}\index{Penner's construction}

In fact, the existence of Torelli pseudo-Anosov mapping class is a consequence of Thurston's explicit construction of pseudo-Anosov mapping classes \cite{Thurston_construction}. Thurston's construction of pseudo-Anosov mapping classes was generalized by Penner \cite{Penner_construction}. We close this section by introducing Penner's construction, which is more combinatorial.

Let $S$ be a closed surface of genus $g \ge 2$, for convenience. Let $c_1, \cdots, c_k$ be disjoint simple closed curves on $S$ and $c = \cup_{i = 1}^k c_i$ be a multicurve on $S$. Similarly, let $d_1, \cdots, d_m$ be disjoint simple closed curves on $S$ and set $d = \cup_{i = 1}^m d_i$. We say that $c$ and $d$ fill\index{filling curves} the surface $S$ (or $c_1, \cdots, c_k$ and $d_1, \cdots, d_k$ fill $S$) if every component of $S - (c \cup d)$ is an open disk, when $c$ and $d$ are in minimal position with respect to intersection number. In other words, any essential simple closed curve on $S$ must intersect $c \cup d$. See Figure \ref{fig.penner}.

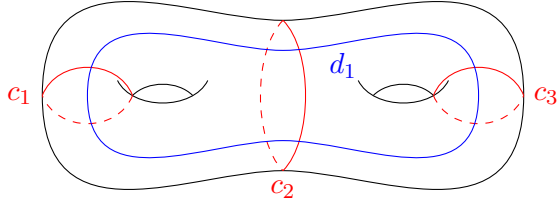
\begin{figure}[h]
	\centering
	\begin{tikzpicture}[scale=2, every node/.style={scale=1}]
	\draw (-1.6, 0) .. controls (-1.6, 1) and (-0.6, 0.5) .. (0, 0.5) .. controls (0.6, 0.5) and (1.6, 1) .. (1.6, 0);
	\begin{scope}[rotate=180]
	\draw (-1.6, 0) .. controls (-1.6, 1) and (-0.6, 0.5) .. (0, 0.5) .. controls (0.6, 0.5) and (1.6, 1) .. (1.6, 0);
	\end{scope}
	
	\draw (-1.1, 0.1) .. controls (-1, -0.1) and (-0.6, -0.1) .. (-0.5, 0.1);
	\draw (-1, 0) .. controls (-0.9, 0.1) and (-0.7, 0.1) .. (-0.6, 0);
	
	\draw (1.1, 0.1) .. controls (1, -0.1) and (0.6, -0.1) .. (0.5, 0.1);
	\draw (1, 0) .. controls (0.9, 0.1) and (0.7, 0.1) .. (0.6, 0);
	
	\draw[red] (1, 0) .. controls (1.1, 0.25) and (1.5, 0.25) .. (1.6, 0);
	\draw[red, dashed] (1, 0) .. controls (1.1, -0.25) and (1.5, -0.25) .. (1.6, 0);
	
	\draw[red] (1.6, 0) node[right] {$c_3$};

 \draw[red] (-1, 0) .. controls (-1.1, 0.25) and (-1.5, 0.25) .. (-1.6, 0);
	\draw[red, dashed] (-1, 0) .. controls (-1.1, -0.25) and (-1.5, -0.25) .. (-1.6, 0);
	
	\draw[red] (-1.6, 0) node[left] {$c_1$};
	
	\draw[red] (0, 0.5) .. controls (0.2, 0.3) and (0.2, -0.3) .. (0, -0.5);
	\draw[red, dashed] (0, 0.5) .. controls (-0.2, 0.3) and (-0.2, -0.3) .. (0, -0.5);
	
	\draw[red] (0, -0.5) node[below] {$c_2$};

        \draw[blue] (-1.3, 0) .. controls (-1.3, 0.7) and (-0.5, 0.3) .. (0, 0.3) .. controls (0.5, 0.3) and (1.3, 0.7) .. (1.3, 0) .. controls (1.3, -0.7) and (0.5, -0.3) .. (0, -0.3) .. controls (-0.5, -0.3) and (-1.3, -0.7) .. (-1.3, 0);
        \draw[blue] (0.4, 0.2) node {$d_1$};

	\end{tikzpicture}
	
	\caption{$c = c_1 \cup c_2 \cup c_3$ and $d = d_1$ fill the surface} \label{fig.penner}
\end{figure}

Penner showed the following criterion for the product of $T_{c_i}$'s and $T_{d_i}$'s to be pseudo-Anosov.

\begin{theorem}[Penner's construction] \label{thm.pennerconstruction}
    Let $c = \cup_{i = 1}^k c_i$ and $d = \cup_{i = 1}^m d_i$ be filling multicurves on $S$. Any product of positive powers of $T_{c_i}$'s and negative powers of $T_{d_i}$'s, where each $c_i$ and each $d_i$ appear at least once, is pseudo-Anosov.
\end{theorem}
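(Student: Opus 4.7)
The plan is to prove this via the classical bigon track argument of Penner, whose key engine is the Perron–Frobenius theorem applied to a non-negative matrix that records how the twists act on train-track weights. First, I would build a bigon track (or smoothed train track) $\tau$ on $S$ whose branches are the components of $(c \cup d) \setminus (c \cap d)$, with the appropriate switching condition at each intersection point of $c_i$ and $d_j$. Each $c_i$ and each $d_j$ is carried by $\tau$ with a distinguished positive integer weight vector, and the cone $V(\tau) \subseteq \mathbb{R}_{\ge 0}^N$ of admissible branch weights parametrizes the measured laminations carried by $\tau$. Because $c$ and $d$ fill $S$, every complementary region of $\tau$ is a disk or once-punctured disk, so $V(\tau)$ is an open cone in the space of measured laminations on $S$.

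Second, I would show that each positive Dehn twist $T_{c_i}^{n_i}$ and each negative Dehn twist $T_{d_j}^{-m_j}$ preserves $V(\tau)$ and acts on it by a non-negative integer matrix. A direct local computation at intersection points shows that $T_{c_i}$ adds the $c_i$-weight of a lamination to each branch crossing $c_i$ (and symmetrically for $T_{d_j}^{-1}$ with a sign flip built into the twist direction); all positive combinations of such rank-one perturbations stay in $V(\tau)$. Consequently the composition $f$ given in the theorem corresponds to a non-negative integer matrix $M \in \mathrm{Mat}_N(\mathbb{Z}_{\ge 0})$ acting on $V(\tau)$.

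Third, I would verify that $M$ is \emph{primitive} (some power is strictly positive). This is where the hypotheses are used in full: the filling condition guarantees that every branch of $\tau$ is connected to every other branch by a chain of intersections in $c \cup d$, and the requirement that every $c_i$ and every $d_j$ appear at least once in the word ensures that the matrix mixing across all these intersections is actually realized by $M$. A careful combinatorial bookkeeping shows that for any two branches $e, e'$ there is a power $M^p$ whose $(e, e')$-entry is positive. The Perron–Frobenius theorem then gives a unique (up to scale) positive eigenvector $v^+ \in V(\tau)$ with simple leading eigenvalue $\lambda > 1$, and the same argument applied to $f^{-1}$ (equivalently, to the transpose-type action on the dual track) yields a positive eigenvector $v^-$ with eigenvalue $1/\lambda$.

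Finally, I would interpret $v^+$ and $v^-$ as transverse measured foliations $\cF^+$ and $\cF^-$: transversality follows from $\cF^+$ being carried by $\tau$ while $\cF^-$ is carried by a dual track (both inherited from the filling pair $c, d$), and their joint support fills $S$ because $\tau$ itself fills. Since $f$ multiplies the transverse measures by $\lambda$ and $1/\lambda$ respectively, $f$ satisfies the definition of pseudo-Anosov in Theorem \ref{thm.ntclass}(3) with stretch factor $\lambda_f = \lambda$. The main obstacle I anticipate is the primitivity step: one must rule out invariant sub-cones that would correspond to reducible behaviour, and this is exactly where both the filling hypothesis and the ``each $c_i$, each $d_j$ appears'' hypothesis are indispensable — dropping either allows the matrix to be reducible in the Perron–Frobenius sense and the conclusion fails.
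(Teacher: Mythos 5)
The paper does not prove this theorem: it is quoted as Theorem \ref{thm.pennerconstruction} with a citation to Penner's original article \cite{Penner_construction}, so there is no in-paper argument to compare against. Your sketch is, in outline, exactly Penner's original proof: smooth $c\cup d$ to a bigon track $\tau$, check that positive twists about the $c_i$ and negative twists about the $d_j$ preserve the cone of weights carried by $\tau$ and act by non-negative integer matrices of the form ``identity plus non-negative,'' use the filling hypothesis plus the requirement that every curve appears to get a primitive (Perron--Frobenius) transition matrix, and extract the invariant expanding and contracting data from the leading eigenvectors of the word and of its inverse (which is again a Penner word with the roles of $c$ and $d$ exchanged, carried by the oppositely smoothed track). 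So the approach is correct and is the standard one; the caveats are that the real content is concentrated in steps you assert rather than carry out: the cone-preservation computation depends delicately on the smoothing convention (this is precisely why the signs of the twists cannot be mixed), the primitivity ``bookkeeping'' is the heart of Penner's lemma and needs the connectivity of $c\cup d$ coming from filling, and the final step needs more than transversality language --- one must either verify that the two eigen-laminations genuinely give the pseudo-Anosov structure, or (more cheaply) note that the attracting eigenvector is positive on every branch of a filling track, hence corresponds to a filling measured foliation $\mu$ with $f\mu=\lambda\mu$, $\lambda>1$; then $f$ cannot be periodic (the measure is scaled) nor reducible (an invariant multicurve $\gamma$ would force $i(\mu,\gamma)=\lambda\, i(\mu,\gamma)=0$, contradicting filling), so Theorem \ref{thm.ntclass} forces $f$ to be pseudo-Anosov. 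With that ending made precise, your outline is a faithful reconstruction of the cited proof rather than an alternative to anything in this paper.
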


Penner conjectured that any pseudo-Anosov mapping class has a power that can be obtained from Penner's construction. Shin and Strenner disproved Penner's conjecture\index{Penner's conjecture} \cite{SS_Penner} using an algebraic approach.

\section{Lanier--Margalit's well-suited criterion and its generalizations} 
\label{sec:LMcriterion}

For a group $G$ and a subset $H \subset G$, we denote by $\llangle H \rrangle$ the normal closure\index{normal closure} of $H$ in $G$. This is the smallest normal subgroup of $G$ containing $H$. An element $h \in G$ is called a normal generator\index{normal generator} of $G$ if its normal closure $\llangle h \rrangle$ is the whole group $G$.
Lanier and Margalit \cite{LM_NG} provided a criterion for a mapping class to be a normal generator of the mapping class group, by considering a certain graph defined for a given mapping class. This criterion is called \emph{well-suited criterion}\index{well-suited criterion}.

Let $S = S_{g, n}$ be a surface of genus $g \ge 1$ with $n \ge 0$ punctures. 
Given a mapping class $f \in \Mod(S)$, we define the graph $N_f(S)$ as follows: 
\begin{itemize}
    \item each vertex is an isotopy class of a non-separating essential simple closed curve;
    \item two vertices $a, b$ are connected by an edge if $(h^{-1}fh)(a) = b$ for some $h \in \Mod(S)$, regarding vertices as isotopy classes of simple closed curves.
\end{itemize}
The graph $N_f(S)$ is called \emph{graph of curves for $f$}\index{graph of curves for $f$}. Intuitively, if the normal closure of $f$ is large enough, then given a generic pair of non-separating essential simple closed curves, one can be mapped to the other by a sequence of conjugates of $f$. This means that the normal generation of $f \in \Mod(S)$ can be encoded by the connectivity of $N_f(S)$. Indeed, we have the following, which is a slight generalization of Lanier--Margalit's well-suited criterion.

\begin{theorem}[Well-suited criterion]\index{well-suited criterion}  \label{thm.wellsuited}
Let $f \in \Mod(S)$. 
If $N_f(S)$ is connected, then $\llangle f \rrangle$ contains the commutator subgroup $[\PMod(S), \PMod(S)]$.
Moreover, when $g \ge 3$, $N_f(S)$ is connected if and only if $$\PMod(S) \le \llangle f \rrangle.$$
\end{theorem}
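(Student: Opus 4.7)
The plan is to reduce the first assertion to showing that $T_a T_b^{-1} \in \llangle f \rrangle$ for every pair of non-separating simple closed curves $a$ and $b$. Once this is in hand, Theorem~\ref{thm.dehnlickorish} says that $\PMod(S)$ is generated by finitely many Dehn twists along non-separating curves, so all of these generators coincide modulo $\PMod(S) \cap \llangle f \rrangle$; the quotient $\PMod(S)/(\PMod(S) \cap \llangle f \rrangle)$ is therefore cyclic, hence abelian, forcing $[\PMod(S), \PMod(S)] \le \llangle f \rrangle$.

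The relation $T_b T_a^{-1} \in \llangle f \rrangle$ for a pair $(a,b)$ joined by a single edge of $N_f(S)$ is a direct consequence of the conjugation formula $T_{g(a)} = g T_a g^{-1}$: if $b = (h^{-1} f h)(a)$ and we set $g = h^{-1} f h \in \llangle f \rrangle$, then
\[
T_b T_a^{-1} \;=\; g T_a g^{-1} T_a^{-1} \;=\; g \cdot \bigl( T_a g^{-1} T_a^{-1} \bigr),
\]
and both factors sit inside $\llangle f \rrangle$ by its normality in $\Mod(S)$. To pass from a single edge to an arbitrary pair $(a,b)$, connectivity of $N_f(S)$ supplies a path $a = c_0, c_1, \ldots, c_n = b$ along which
\[
\bigl( T_{c_n} T_{c_{n-1}}^{-1} \bigr) \bigl( T_{c_{n-1}} T_{c_{n-2}}^{-1} \bigr) \cdots \bigl( T_{c_1} T_{c_0}^{-1} \bigr) \;=\; T_b T_a^{-1}
\]
telescopes cleanly. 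Combined with Harer's perfectness (Theorem~\ref{thm.harer}) for $g \ge 3$, this yields the ``only if'' half of the moreover.

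For the ``if'' half I would argue by contrapositive, exploiting a $\Mod(S)$-equivariance of the edge relation: conjugating $(h^{-1} f h)(a) = b$ by any $k \in \Mod(S)$ and setting $h' = h k^{-1}$ produces $(h'^{-1} f h')(ka) = kb$, so the connected components of $N_f(S)$ are $\Mod(S)$-invariant. Since $\Mod(S)$ acts transitively on isotopy classes of non-separating simple closed curves (the change of coordinates principle), $N_f(S)$ is either connected or totally disconnected. In the latter case $f$---and hence every conjugate of $f^{\pm 1}$, and thus every element of $\llangle f \rrangle$---fixes every non-separating isotopy class, whereas any Dehn twist $T_c \in \PMod(S)$ moves non-separating curves meeting $c$ essentially, so $\PMod(S) \not\le \llangle f \rrangle$. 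The main subtlety of the whole argument is precisely this dichotomy: the $\Mod(S)$-invariance of components collapses the possibilities for $N_f(S)$ to ``connected'' or ``totally disconnected'', and it is this observation that makes the converse clean.
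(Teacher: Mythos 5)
Your proof of the first assertion (and hence of the ``only if'' half of the moreover) is correct: the single-edge computation $T_b T_a^{-1} = g\,(T_a g^{-1} T_a^{-1})$ with $g = h^{-1}fh$, the telescoping along a path, and the observation that the quotient $\PMod(S)/(\PMod(S)\cap \llangle f \rrangle)$ is then cyclic, hence abelian, together give $[\PMod(S),\PMod(S)] \le \llangle f \rrangle$. This is organized differently from the paper (which bounds $[\PMod(S),\PMod(S)]$ by the normal closure of the differences $T_{c_i}T_{c_j}^{-1}$ via commutator identities and then reduces to a single pair of curves with intersection number one), but it is a legitimate and arguably cleaner route to the same conclusion.

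The ``if'' half, however, has a genuine gap: the dichotomy ``$N_f(S)$ is either connected or totally disconnected'' is false, and it does not follow from vertex-transitivity. Transitivity of $\Mod(S)$ on non-separating curves, together with the equivariance of the edge relation, only shows that $\Mod(S)$ permutes the connected components transitively, so all components are isomorphic; it does not force a disconnected graph to be edgeless (think of two disjoint copies of a single edge, which is vertex-transitive). Concretely, take $g \ge 3$ and $f = T_c$ with $c$ a separating curve. Any non-separating $a$ with $i(a,c)>0$ is joined by an edge to $T_c(a) \ne a$, so $N_f(S)$ has many edges; yet $\llangle f \rrangle \le \I_g$, every vertex in the component of $a$ lies in the $\llangle f\rrangle$-orbit of $a$ and hence has the same homology class as $a$ up to sign, and since non-separating curves realize many homology classes, $N_f(S)$ is disconnected. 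So your contrapositive never gets off the ground. The repair is direct rather than by contrapositive: if $\PMod(S) \le \llangle f \rrangle$, then given non-separating $a, b$ choose $h \in \PMod(S)$ with $h(a) = b$ (change of coordinates), write $h = f_1 \cdots f_m$ with each $f_i$ a conjugate of $f^{\pm 1}$, and observe that $a,\ f_m(a),\ f_{m-1}f_m(a),\ \ldots,\ f_1\cdots f_m(a) = b$ is a path in $N_f(S)$; this is exactly how the paper concludes.
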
 

Since this is a neat criterion with many useful corollaries, we would like to include its proof here. The proof is organized slightly differently from \cite{LM_NG}.

\begin{proof} 
By Dehn--Lickorish's theorem (Theorem \ref{thm.dehnlickorish}), there exist finitely may non-separating simple closed curves $c_1, \cdots, c_k$ on $S$ such that
\begin{enumerate}
    \item for any $i, j \in \{1, \cdots, k\}$, the geometric intersection number $i(c_i, c_j)$ is either $0$ or $1$;
    \item Dehn twists $T_{c_1}, \cdots, T_{c_k}$ generate $\PMod(S)$.
\end{enumerate}

\medskip
{\bf Step 1.} We claim that the commutator subgroup $[\PMod(S), \PMod(S)]$ is contained in the normal closure of $$\{ T_{c_i} T_{c_j}^{-1} : i, j \in \{ 1, \cdots, k\} \}.$$
For each $m \in \N$, let $P_m$ be the set of pure mapping classes that can be written as products of at most $m$ number of elements in $\{T_{c_1}^{\pm 1}, \cdots, T_{c_k}^{\pm 1}\}$. We then have $$\PMod(S) = \bigcup_{m \in \N} P_m,$$
hence $$[\PMod(S), \PMod(S)] = \bigcup_{m \in \N} [P_m, P_m].$$
On the other hand, for $s, h, l \in \PMod(S)$, we observe that
$$[sh, l] = shlh^{-1}s^{-1}l^{-1} = s(hlh^{-1}l^{-1})s^{-1} (s l s^{-1}l^{-1}) = (s [h, l] s^{-1}) [s, l]$$
and similarly $$[s, hl] = s h l s^{-1}l^{-1}h^{-1} = (sh s^{-1}h^{-1}) h (sl s^{-1} l^{-1}) h^{-1} = [s, h] (h [s, l] h^{-1}).$$
This implies that for each $m \ge 2$, $$[P_m, P_m] \le \llangle [P_{m-1}, P_{m-1}] \rrangle.$$
By an inductive argument, we conclude that $$[\PMod(S), \PMod(S)] \le \llangle [P_1, P_1] \rrangle.$$
Moreover, for any $T_{c_i}$ and $T_{c_j}$, the commutator $[T_{c_i}^{\pm 1}, T_{c_j}^{\pm 1}]$ is contained in the kernel of $\Mod(S) \to \Mod(S) / \llangle T_{c_i} T_{c_j}^{-1} \rrangle$, which is $\llangle T_{c_i} T_{c_j} \rrangle$. Hence, the normal closure of the set $\{ T_{c_i} T_{c_j}^{-1} : i, j \in \{ 1, \cdots, k \}\}$ contains all the commutators $[T_{c_i}^{\pm 1}, T_{c_j}^{\pm 1} ]$, and therefore contains $[P_1, P_1]$. This shows the claim.

\medskip
{\bf Step 2.} We show that for any non-separating curves $c, d$ with $i(c, d) = 1$, $$[\PMod(S), \PMod(S)] \le \llangle T_{c} T_{d}^{-1} \rrangle.$$
Fix two curves $c_i, c_j$. Recall that $i(c_i, c_j)$ is either $0$ or $1$. If $i(c_i, c_j) = 0$, then $[T_{c_i}, T_{c_j}] = e$. Otherwise, if $i(c_i, c_j) = 1$, then it follows from $i(c, d) = 1$ that there exists $h \in \PMod(S)$ such that $h(c) = c_i$ and $h(d) = c_j$. We then have $$T_{c_i} T_{c_j}^{-1} = (h T_c h^{-1}) (h T_{d} h^{-1})^{-1} = h (T_c T_d^{-1})h^{-1}.$$
Therefore, $T_{c_i}T_{c_j}^{-1} \in \llangle T_cT_d^{-1} \rrangle$. By Step 1, the claim follows.

\medskip
{\bf Step 3.} Let $c$ be a non-separating curve such that $i(c, f(c)) = 1$. Then $$[\PMod(S), \PMod(S)] \le \llangle f \rrangle.$$
Indeed, we have $[\PMod(S), \PMod(S)] \le \llangle T_c T_{f(c)}^{-1} \rrangle$ by Step 2. Since $T_c T_{f(c)}^{-1} = (T_c f T_c^{-1}) f^{-1} \in \llangle f \rrangle$, the claim follows. 

\medskip
{\bf Step 4.} Now suppose that $N_f(S)$ is connected. Let $c, d$ be non-separating curves with $i(c, d) = 1$. By the connectivity of $N_f(S)$, there exist conjugates $f_1, \cdots, f_m$ of $f^{\pm 1}$ such that $$d = (f_1 \cdots f_m)(c).$$
By Step 3, this implies that $$[\PMod(S), \PMod(S)] \le \llangle f_1 \cdots f_m \rrangle.$$
Since each $f_i$ is a conjugate of $f^{\pm 1}$, we have $$[\PMod(S), \PMod(S)] \le \llangle f \rrangle$$
as desired.

\medskip
{\bf Step 5.} 
For the last statements of the theorem, let us first assume that $g \geq 3$. By Harer's theorem (Theorem \ref{thm.harer}), $\PMod(S)$ is perfect if $g \geq 3$, i.e., $[\PMod(S), \PMod(S)] = \PMod(S)$. Hence, in this case, if $N_f(S)$ is connected, then $\PMod(S) \le \llangle f \rrangle$ by Step 4. Since $\PMod(S)$ acts transitively on the set of non-separating curves, the converse easily follows. 
\end{proof}

\section{Small translation lengths and normal generation} \label{sec.smalltrlength}

Interestingly, whether a given mapping class is a normal generator can be detected by its dynamical behavior on Teichm\"uller space, as first shown by Lanier and Margalit \cite{LM_NG} and extended by our joint work with Wu \cite{BKW_reducible}. This section is devoted to the discussion on how dynamics on Teichm\"uller spaces is related to normal generation of mapping class groups.

\subsection*{Translation lengths on Teichm\"uller space}
Let $S = S_{g, n}$ be a surface of genus $g \ge 1$ with $n \ge 0$ punctures. The \emph{Teichm\"uller space}\index{Teichm\"uller space} $\T(S)$ is defined as the set of marked hyperbolic structures on $S$. More precisely, $$\T(S) := \{f : S \to X :\text{homeomorphsim to a hyperbolic surface } X\}/ \sim$$
where $h_1 : S \to X_1$ and $h_2 : S \to X_2$ are identified if $h_2 \circ h_1^{-1} : X_1 \to X_2$ is isotopic to an isometry. We use the notation $(h, X)$ for the element $h : S \to X$ of the Teichm\"uller space.

The mapping class group $\Mod(S)$ acts on $\T(S)$ by precomposition: for $f \in \Mod(S)$, $$f \cdot (h, X) = (h \circ f^{-1}, X).$$
There exists a  metric $d_{\T}$ on $\T(S)$, called Teichm\"uller metric, which is invariant under the $\Mod(S)$-action. We omit the precise definition of the Teichm\"uller metric since we will not use it, while we focus more on some dynamical properties of the isometric $\Mod(S)$-action.

\begin{definition}[Asymptotic translation length]\index{asymptotic translation length}
    Let $\mathcal{X}$ be a metric space equipped with a metric $d_{\mathcal{X}}$ and let $f : \mathcal{X} \to \mathcal{X}$ an isometry. The \emph{asymptotic translation length} of $f$ on $\mathcal{X}$ is defined by $$\ell_{\mathcal{X}}(f) := \lim_{m \to \infty} \frac{d_{\mathcal{X}}(o, f^m( o))}{m}$$
    for any $o \in \mathcal{X}$.
\end{definition}

In this section, we consider asymptotic translation lengths of mapping classes on the Teichm\"uller space, equipped with the Teichm\"uller metric. For $f \in \Mod(S)$, we simply write $\ell_{\T}(f) := \ell_{\T(S)}(f)$. 

\subsection*{Pseudo-Anosov normal generators}\index{normal generator!pseudo-Anosov normal generator} Lanier and Margalit showed that pseudo-Anosovs with small asymptotic translation lengths on Teichm\"uller spaces are normal generators. They exist due to Penner \cite{Penner_bounds} (see also Theorem \ref{thm.penner}).  More precisely, they showed the following. We note that the upper bound $\frac{1}{2} \log 2$ does not depend on the surface $S$.

\begin{theorem}[Lanier--Margalit's criterion]\index{Lanier--Margalit's criterion} \cite[Theorem 1.2]{LM_NG} \label{thm.LMtrans}
    Let $f \in \Mod(S)$ be pseudo-Anosov. If $\ell_{\T}(f) \le \frac{1}{2} \log 2$, then $$[\PMod(S), \PMod(S)] \le \llangle f \rrangle.$$
    In particular, if $S$ is closed and of genus at least three, then $$\llangle f \rrangle = \Mod(S).$$
    
\end{theorem}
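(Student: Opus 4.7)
The plan is to reduce the theorem to a single intersection-theoretic condition: find a non-separating essential simple closed curve $c$ on $S$ with $i(c, f(c)) = 1$. Once such a $c$ is in hand, Step~3 in the proof of the well-suited criterion (Theorem~\ref{thm.wellsuited}) already gives $[\PMod(S), \PMod(S)] \le \llangle f \rrangle$. The ``in particular'' clause then follows immediately from Harer's theorem (Theorem~\ref{thm.harer}): for $S$ closed of genus at least three, $\PMod(S) = \Mod(S)$ is perfect, so the above inclusion upgrades to $\llangle f \rrangle = \Mod(S)$.

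To convert the Teichm\"uller hypothesis into something combinatorial, I would invoke the classical equality $\ell_\T(f) = \log \lambda_f$, valid for any pseudo-Anosov because its Teichm\"uller axis is a translation axis of displacement $\log \lambda_f$. The hypothesis then reads $\lambda_f \le \sqrt{2}$, and the entire theorem reduces to the following core lemma: \emph{every pseudo-Anosov $f$ with $\lambda_f \le \sqrt{2}$ admits a non-separating simple closed curve $c$ with $i(c, f(c)) = 1$.}

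To establish the lemma, I would pick a point $X$ on the Teichm\"uller axis $A_f \subset \T(S)$ and let $c$ be a short non-separating curve on $X$, for instance a non-separating systole, or a non-separating component of a short pants decomposition at $X$. Since $f$ translates $A_f$ by $\log \lambda_f$, the Teichm\"uller map $X \to f(X)$ scales the horizontal foliation by $\lambda_f$; in particular, viewed on $X$ the curve $f(c)$ has hyperbolic length at most $\lambda_f \, \ell_X(c) \le \sqrt{2}\,\ell_X(c)$. A collar-lemma / short-geodesic argument on $X$ should then force $i(c, f(c)) \le 1$, the threshold $\sqrt{2}$ being exactly sharp for this bound. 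Since $f$ is pseudo-Anosov, $f(c) \ne c$. If it happens that $i(c, f(c)) = 0$, I would replace $c$ by a suitable $f^k(c)$ or by a different short curve at $X$ to achieve intersection exactly $1$, using that a pseudo-Anosov cannot stabilize any isotopy class of multicurve and that $i(c, f^n(c))$ grows like $\lambda_f^{2n}$ as $n \to \infty$.

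The main obstacle is the quantitative geometric step: converting $\lambda_f \le \sqrt{2}$ into the sharp inequality $i(c, f(c)) \le 1$ via systole and collar estimates along the Teichm\"uller axis, and then upgrading it to the equality $i(c, f(c)) = 1$ by a careful choice of $c$ adapted to the invariant foliations of $f$. The constant $\tfrac{1}{2}\log 2$ enters essentially here: it should be exactly the threshold below which two short simple geodesics on a hyperbolic surface are forced into a configuration admitting at most one crossing.
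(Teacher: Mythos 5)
Your overall reduction (find a suitable curve, feed it to Step~3 of Theorem~\ref{thm.wellsuited}, then invoke Harer's theorem for the ``in particular'' clause, with $\ell_{\T}(f)=\log\lambda_f$ by Bers) matches the intended structure, but the geometric core of your argument has a genuine gap, and it is exactly where the paper says the real work lies. First, the scaling claim is wrong for the metric you chose: if $X$ lies on the axis and $d_{\T}(X,f(X))=\log\lambda_f$, then the extremal quasiconformal map is $\lambda_f^2$-quasiconformal, so Wolpert's inequality only gives $\ell_X(f(c))\le \lambda_f^2\,\ell_X(c)\le 2\,\ell_X(c)$ for \emph{hyperbolic} length, not the factor $\lambda_f\le\sqrt2$ you use; the factor $\lambda_f$ is correct only for the singular Euclidean (flat) metric of the invariant quadratic differential. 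Second, and more seriously, a hyperbolic collar-lemma argument cannot force $i(c,f(c))\le 1$: collar width decays with absolute length, and nothing lets you choose $X$ so that the systole of $X$ is short in absolute terms (the axis is dictated by $f$). This is precisely why Lanier--Margalit work with the flat metric and the systole-combinatorics of Farb--Leininger--Margalit (\cite[Lemma 2.5, Proposition 2.7]{FLM_lower}), where the relevant annular neighborhood of the systole has width proportional to the systole itself, making the estimate scale-invariant and yielding $i(c,f(c))\le 1$ when $\lambda_f\le\sqrt2$.

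Third, your core lemma asks for intersection exactly $1$ with $c$ non-separating, which is stronger than what the flat-systole argument provides: it gives $i(c,f(c))\le 1$, the systole may be separating, and the case $i(c,f(c))=0$ is handled in \cite{LM_NG} by other configurations of the well-suited criterion (e.g.\ $c$ and $f(c)$ disjoint with non-separating union, not a bounding pair), not by forcing intersection one. Your proposed repair --- replacing $c$ by $f^k(c)$ or ``another short curve'' using that $i(c,f^n(c))$ grows like $\lambda_f^{2n}$ --- does not close this: intersection numbers can jump from $0$ past $1$, and growth of $i(c,f^n(c))$ gives no curve realizing intersection exactly $1$. So as written, both the quantitative step ($\lambda_f\le\sqrt2 \Rightarrow i\le 1$ via hyperbolic collars) and the upgrade to $i(c,f(c))=1$ are unsupported; to follow the actual route you should switch to the flat metric, quote the FLM systole estimates, and supplement Step~3 of Theorem~\ref{thm.wellsuited} with the disjoint-curve configurations that the full well-suited criterion of \cite{LM_NG} covers.
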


In fact, the original statement of Lanier and Margalit was about closed surfaces. However, the same proof works for surfaces with finitely many punctures, as the well-suited criterion (Theorem \ref{thm.wellsuited}), which is a key ingredient, allows punctures. See \cite[Section 3]{LM_NG} for the related remark.

The pseudo-Anosov hypothesis on $f$ is crucial in the proof of Lanier--Margalit, as they investigated the combinatorics of a systole with respect to the singular Euclidean metric on the surface induced by a pseudo-Anosov, using \cite[Lemma 2.5, Proposition 2.7]{FLM_lower} which also work for the punctured cases. Since the asymptotic translation length of a pseudo-Anosov mapping class is logarithmic in terms of its stretch factor, as shown in Bers' proof of Thurston's classification theorem \cite{Bers_extremal}, the translation length is captured by the singular Euclidean structure. Using this, Lanier--Margalit showed that $\ell_{\T}(f) \le \frac{1}{2} \log 2$ implies the existence of an essential simple closed curve $c$ such that $i(c, f(c)) \le 2$, which was the key step in the proof of Theorem \ref{thm.LMtrans}, providing a room to apply the well-suited criterion (Theorem \ref{thm.wellsuited}).

\begin{theorem}[Bers]
    Let $f \in \Mod(S)$ be pseudo-Anosov. Then $$\ell_{\T}(f) = \log \la_f.$$
\end{theorem}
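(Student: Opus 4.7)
The plan is to construct a Teichm\"uller geodesic preserved by $f$ and show that $f$ translates along it by exactly $\log \lambda_f$.

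\emph{Step 1: Build the flat structure.} By definition, a pseudo-Anosov $f \in \Mod(S)$ comes equipped with a pair of transverse measured foliations $(\mathcal{F}^u, \mu^u)$ and $(\mathcal{F}^s, \mu^s)$ whose pairing determines a half-translation surface structure on $S$ (equivalently, a non-zero quadratic differential $q$). In local natural coordinates $z = x + iy$ for $q$, the horizontal and vertical foliations carry transverse measures $|dy|$ and $|dx|$, and a representative of $f$ is an affine homeomorphism with constant derivative $\mathrm{diag}(\lambda_f, \lambda_f^{-1})$.

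\emph{Step 2: Produce the axis.} Apply the Teichm\"uller flow $g_t = \mathrm{diag}(e^t, e^{-t})$ to this flat structure to obtain a one-parameter family of marked conformal (hence, by uniformization, hyperbolic) structures $\{X_t\}_{t \in \RR} \subset \T(S)$. By Teichm\"uller's existence and uniqueness theorem, $t \mapsto X_t$ is a unit-speed distance-realizing geodesic: $d_{\T}(X_s, X_t) = |t - s|$.

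\emph{Step 3: Compute the shift.} Since $f$ acts on the flat structure as $\mathrm{diag}(\lambda_f, \lambda_f^{-1})$, the change-of-marking gives $f \cdot X_t = X_{t + \log \lambda_f}$, so $f$ translates the geodesic $\{X_t\}$ by $\log \lambda_f$. Therefore $d_{\T}(X_0, f^m X_0) = m \log \lambda_f$ for all $m \in \NN$, and passing to the limit in the definition of $\ell_{\T}(f)$ yields $\ell_{\T}(f) = \log \lambda_f$.

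The main obstacle is Step 2: one must invoke Teichm\"uller's theorem to guarantee that the $g_t$-orbit is actually a geodesic in $(\T(S), d_{\T})$ whose arclength parameter realizes the Teichm\"uller distance. Without this input, Steps 1 and 3 only produce the upper bound $\ell_{\T}(f) \le \log \lambda_f$ via $d_{\T}(X_0, fX_0) \le \log \lambda_f$; the matching lower bound then requires Bers' minimum theorem, which identifies $\log \lambda_f$ with $\inf_{x \in \T(S)} d_{\T}(x, f(x))$, attained precisely on the axis constructed above. Either route completes the proof.
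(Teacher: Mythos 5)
The paper itself gives no proof of this statement: it is quoted as Bers' theorem with a citation to \cite{Bers_extremal}, so there is no internal argument to compare against. Your sketch is the standard classical proof, and it is essentially correct: the invariant pair of measured foliations determines a quadratic differential on some $X_0 \in \T(S)$ for which $f$ has an affine representative with derivative $\mathrm{diag}(\la_f, \la_f^{-1})$, the Teichm\"uller flow orbit $\{X_t\}$ through $X_0$ is an isometrically embedded line by Teichm\"uller's existence and uniqueness theorems, and $f \cdot X_t = X_{t \pm \log \la_f}$, whence $d_{\T}(X_0, f^m X_0) = m \log \la_f$ and $\ell_{\T}(f) = \log \la_f$ (using that the asymptotic translation length is independent of the basepoint, so taking $o = X_0$ is harmless). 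You correctly identify the crux: everything hinges on Step 2, i.e.\ on Teichm\"uller's theorem guaranteeing that the affine maps $X_s \to X_t$ are the extremal quasiconformal maps, so that $d_{\T}(X_s, X_t) = |t-s|$; without it one only gets the upper bound, and the lower bound is exactly the content of Bers' solution of the extremal problem in \cite{Bers_extremal}. So your route is the same one the cited literature takes, presented as a proof sketch rather than a citation; the only caution is that Steps 1--2 bundle nontrivial inputs (existence of the invariant flat structure and the geodesic property of the $g_t$-orbit) that must be invoked from Teichm\"uller theory rather than proved here.
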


Theorem \ref{thm.LMtrans} indeed implies the abundance of pseudo-Anosov normal generators, since there are pseudo-Anosov mapping classes whose asymptotic translation lengths are arbitrary close to $0$. More precisely, for each $g \ge 2$, let
\begin{equation} \label{eqn.mintransteich}
L_{\T}(g) := \inf \{ \ell_{\T}(f) : f \in \Mod(S_g) \text{ is pseudo-Anosov}\}\index{asymptotic translation length!minimal asymptotic translation length}
\end{equation}
where $S_g$ is a closed surface of genus $g$. Penner \cite{Penner_bounds} showed the following asymptotic behavior of $L_{\T}(g)$:

\begin{theorem}[Penner] \label{thm.penner}
    There exists $C > 1$ such that $$\frac{1}{C \cdot g} \le L_{\T}(g) \le \frac{C}{g}$$
    for all $g \ge 2$.
\end{theorem}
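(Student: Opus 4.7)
The plan is to prove the two bounds separately. By Bers' theorem (stated above), the asymptotic translation length of a pseudo-Anosov $f$ equals $\log \lambda_f$, so the assertion $L_{\T}(g) \asymp 1/g$ is equivalent to the statement that the infimum of $\log \lambda_f$ over pseudo-Anosov $f \in \Mod(S_g)$ is comparable to $1/g$. I would prove the upper bound constructively using Penner's construction (Theorem \ref{thm.pennerconstruction}), and the lower bound by a combinatorial estimate on the transition matrix of an invariant train track.

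For the upper bound $L_{\T}(g) \leq C/g$, I would construct, for each $g \geq 2$, an explicit pseudo-Anosov $f_g \in \Mod(S_g)$ with $\log \lambda_{f_g} = O(1/g)$. The idea is to take a filling pair of multicurves $A = a_1 \cup \cdots \cup a_p$ and $B = b_1 \cup \cdots \cup b_q$ on $S_g$ arranged in a ``chain'' pattern that uses all $g$ handles, with $p + q = O(g)$. By Theorem \ref{thm.pennerconstruction}, the composition $f_g := T_{a_1} \cdots T_{a_p} T_{b_1}^{-1} \cdots T_{b_q}^{-1}$ is pseudo-Anosov, and $\lambda_{f_g}$ equals the Perron--Frobenius eigenvalue of an explicit non-negative integer transition matrix $M_g$ of size $O(g)$, whose entries are bounded independently of $g$. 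An asymptotic analysis of the characteristic polynomial, or a direct computation exploiting the cyclic symmetry of the construction, gives $\log \lambda_{f_g} = O(1/g)$.

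For the lower bound $L_{\T}(g) \geq 1/(Cg)$, let $f \in \Mod(S_g)$ be an arbitrary pseudo-Anosov. Using train track theory, some power $f^k$ preserves a train track $\tau$ on $S_g$, and $\lambda_f$ equals the Perron--Frobenius eigenvalue of the non-negative integer transition matrix $M$ encoding the carrying map on the branches of $\tau$. The number of branches of any train track on $S_g$ is bounded linearly in $-\chi(S_g) = 2g - 2$. The desired bound would then follow from a combinatorial estimate of the form $\lambda_f^{-\chi(S_g)} \geq 2$, yielding $\log \lambda_f \geq (\log 2)/(2g - 2) \geq 1/(Cg)$.

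The main obstacle, in my view, is the inequality $\lambda_f^{-\chi(S_g)} \geq 2$. This cannot follow from algebraic number theory alone: general Perron-number gap results (Lehmer-type or Smyth-type) yield only an absolute lower bound on $\lambda_f$ that is independent of $g$, which is far too weak here. What is required is a quantitative combinatorial argument that exploits both the integrality of $M$ and the specific structure of a train track on a genus $g$ surface --- morally, some branch weight must at least double within $-\chi(S_g)$ iterations of the pseudo-Anosov action, forcing the PF eigenvalue to satisfy $\lambda_f^{-\chi(S_g)} \geq 2$. On the upper bound side, the technical work lies in verifying that the chosen multicurves truly fill $S_g$ and that the spectral radius of the resulting transition matrix indeed decays like $1 + O(1/g)$ as $g \to \infty$, which requires an explicit description of the train track combinatorics dual to $A \cup B$.
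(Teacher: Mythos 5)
Note first that the paper does not prove this statement at all: it is quoted directly from Penner \cite{Penner_bounds}, so your argument has to stand on its own, and its upper-bound half has a genuine error. Since the $a_i$ are pairwise disjoint and the $b_j$ are pairwise disjoint, your map $f_g = T_{a_1}\cdots T_{a_p}T_{b_1}^{-1}\cdots T_{b_q}^{-1}$ is nothing other than the multitwist composition $T_A T_B^{-1}$ for the filling pair $(A,B)$, and such maps fall under Thurston's construction \cite{Thurston_construction}: the stretch factor is the larger root of $x^2-(2+\mu)x+1$, where $\mu \ge 1$ is the top eigenvalue of $NN^{T}$ with $N_{ij}=i(a_i,b_j)$. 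Hence $\lambda_{f_g}\ge \tfrac{1}{2}(3+\sqrt{5})$ for every $g$, and for your chain pattern $\mu \to 4$, so $\lambda_{f_g}\to 3+2\sqrt{2}$; in no case is $\log\lambda_{f_g}=O(1/g)$, and the claim that the transition matrix has spectral radius $1+O(1/g)$ is false. Adding more twists can only help fill but cannot make the dilatation small. Penner's examples use the opposite device, a $g$-th root trick: arrange the curves with a cyclic symmetry (as in the symmetric picture of Figure \ref{fig:covering}), and set $f_g=\rho\circ T_{a}T_{b}^{-1}$ where $\rho$ is a rotation of order about $g$ and only a bounded number of twists are performed. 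Then $f_g^{\,g}$ is (up to conjugation) a Penner product in which each rotated curve appears once, so $f_g$ is pseudo-Anosov, while a direct transition-matrix estimate bounds $\lambda_{f_g}^{\,g}=\lambda_{f_g^{\,g}}$ by a universal constant (Penner gets $11$), giving $\ell_{\T}(f_g)\le C/g$. Without composing with a periodic map of order comparable to $g$, no Penner/Thurston product yields the upper bound.

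Your lower-bound outline is essentially Penner's actual route, but the step you flag as the main obstacle is exactly his key lemma and is left as a hope: one needs that a suitable power of $f$ is carried by an invariant (bigon) train track whose number of branches is linear in $g$ \emph{with a primitive integral transition matrix} (reducibility must be ruled out or handled by passing to a subtrack/power), and then the combinatorial statement that an $n\times n$ primitive nonnegative integer matrix which is not a permutation matrix satisfies $\lambda^{n}\ge 2$; from $n\le c\,g$ this gives $\log\lambda_f\ge \log 2/(c\,g)$. You are right that Lehmer/Smyth-type algebraic bounds cannot replace this, and the doubling heuristic is the correct mechanism, but as written neither the primitivity reduction nor the doubling lemma is proved. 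So the lower half is the right strategy but incomplete, and the upper half needs to be replaced by the rotation construction described above.
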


Therefore, for any closed surface of sufficiently large genus, we can always find a pseudo-Anosov mapping class whose asymptotic translation length on the Teichm\"uller space is smaller than $\frac{1}{2} \log 2$. By Theorem \ref{thm.LMtrans}, this mapping class turns out to be a normal generator of the mapping class group.

\subsection*{Reducible normal generators}\index{normal generator!reducible normal generator}

Recall from Thurston's classification (Theorem \ref{thm.ntclass}) that there are two types of infinite-order mapping classes, pseudo-Anosovs and reducibles. In our joint work with Wu \cite{BKW_reducible}, we extend Theorem \ref{thm.LMtrans} to deal with reducible mapping classes.

We are especially interested in reducible mapping classes with non-trivial dynamics.
Let $f \in \Mod(S)$ be reducible. Then there are finitely many disjoint simple closed curves $c_1, \cdots, c_k$ whose union $\cup_{i = 1}^k c_i$ is invariant under $f$. This implies that for some $m \in \N$, $f^m$ fixes each connected component of $S - \cup_{i = 1}^k c_i$. Taking the collection $\{c_1, \cdots, c_k\}$ to be maximal,  the restriction of $f^m$ to each component of $S - \cup_{i = 1}^k c_i$ is either periodic or pseudo-Anosov. If every such restriction is periodic, then we can enlarge $m$ so that $f$ is a composition of Dehn twists $T_{c_i}$'s. It then follows that $\ell_{\T}(f) = 0$, which makes considering the asymptotic translation length meaningless. In this regard, we come up with the following notion:

\begin{definition}[Partly pseudo-Anosov mapping class]
    We call $f \in \Mod(S)$ \emph{partly pseudo-Anosov}\index{mapping class!partly pseudo-Anosov} if there exist a representative $f_0$ of $f$ and an embedded subsurface $A \subset S$ so that the isotopy class of the restriction $f_0|_A$ is a pseudo-Anosov element of $\Mod(A)$. We simply denote by $f|_A$ the mapping class of $A$ represented by $f_0|_A$.
\end{definition}

    Some terminologies similar to ``partly pseudo-Anosov" were introduced by several authors. For instance, in \cite{minsky2021bottlenecks}, a mapping class is called ``partial pseudo-Anosov" if its restriction to some subsurface is pseudo-Anosov while it is the identity outside of the subsurface. This is a more restrictive notion than partly pseudo-Anosov mapping class. There is also a terminology ``pure" mapping class, different from the pure mapping class that we use in this chapter, which has a similar feature to partly pseudo-Anosovs (see e.g. \cite{biringer2023homoclinic}).

We now state the generalizatin of Lanier--Margalit's theorem (Theorem \ref{thm.LMtrans}):

\begin{theorem}[Baik--Kim--Wu] \label{thm.bkw}
    Let $S$ be a closed surface. Let $f \in \Mod(S)$ be partly pseudo-Anosov with an invariant subsurface $A$ on which $f|_A$ is pseudo-Anosov. If $\ell_{\T}(f) \le \frac{1}{2} \log 2$ and $A$ has genus at least three, then $$\llangle f \rrangle = \Mod(S).$$
\end{theorem}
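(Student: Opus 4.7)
The plan is to reduce to an application of the Lanier--Margalit theorem on the invariant subsurface $A$ and then transfer the resulting normal generation back to $S$ via the well-suited criterion (Theorem~\ref{thm.wellsuited}).

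First I would pass from the (possibly bordered) subsurface $A$ to its cusped version $\hat{A}$, obtained by gluing a once-punctured disk to each boundary component of $A$. The restriction $f|_A$ then induces a pseudo-Anosov $\hat{f} \in \Mod(\hat{A})$, and $\hat{A}$ still has genus at least three. The core quantitative step is the translation length comparison
$$\ell_{\T(\hat{A})}(\hat{f}) \leq \ell_{\T(S)}(f).$$
To establish this I would construct an equivariant restriction map $\rho : \T(S) \to \T(\hat{A})$ sending a marked hyperbolic structure $(h, X)$ on $S$ to the cusped hyperbolic surface obtained by straightening $h(\partial A)$ to geodesics in $X$ and replacing the exterior of $A$ by a cusp at each boundary component. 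If $\phi : X_1 \to X_2$ is a Teichm\"uller map between two points of $\T(S)$, then its restriction to $A$ is a quasiconformal map with dilatation no larger than that of $\phi$; this shows $\rho$ is non-expanding for the Teichm\"uller metric. Equivariance combined with this Lipschitz property gives the inequality after iteration and division by $n$.

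With this comparison in hand, $\ell_{\T(\hat{A})}(\hat{f}) \leq \frac{1}{2}\log 2$, so the Lanier--Margalit theorem (Theorem~\ref{thm.LMtrans}) applies to $\hat{f}$ on $\hat{A}$. Running the proof in \cite{LM_NG}, the systole of the invariant singular Euclidean metric of $\hat{f}$ produces a non-separating simple closed curve $c$ in $\hat{A}$ with $i_{\hat{A}}(c, \hat{f}(c)) = 1$. Since a systole is a closed geodesic of positive length, it cannot be parallel to a puncture of $\hat{A}$, so after isotopy we may take $c$ to lie in $A \subset S$. As $A$ has genus at least three, any curve non-separating in $A$ is also non-separating in $S$: the complement $S \setminus c$ contains the connected set $A \setminus c$ and is joined to the exterior of $A$ along $\partial A$. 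Finally, $f(c) = f|_A(c)$ because $c \subset A$ and $f(A) = A$, giving $i_S(c, f(c)) = 1$.

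Having produced a non-separating curve $c$ in $S$ with $i(c, f(c)) = 1$, the argument finishes exactly as in Steps 3 and 5 of the proof of Theorem~\ref{thm.wellsuited}: the identity $T_c T_{f(c)}^{-1} = (T_c f T_c^{-1}) f^{-1}$ shows $T_c T_{f(c)}^{-1} \in \llangle f \rrangle$, and the commutator manipulations carried out there yield $[\PMod(S), \PMod(S)] \leq \llangle f \rrangle$. Since $S$ is closed of genus at least three, Harer's theorem (Theorem~\ref{thm.harer}) gives $\Mod(S) = \PMod(S) = [\PMod(S), \PMod(S)]$, whence $\llangle f \rrangle = \Mod(S)$. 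The main obstacle is the translation length comparison in the second paragraph: one must verify carefully that the cap-off restriction genuinely descends to a well-defined, non-expanding map between Teichm\"uller spaces with cusps added along $\partial A$, since the two metrics are a priori computed using extremal quasiconformal maps on different underlying surfaces.
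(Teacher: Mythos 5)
Your overall architecture (compare translation lengths between $S$ and the capped-off subsurface, apply Lanier--Margalit there, transfer back via the well-suited criterion) is the right shape, but the step you yourself flag as the main obstacle is a genuine gap, and it is exactly the point where the paper does something different. There is no known (and, as stated, no justified) non-expanding ``restriction'' map $\rho:\T(S)\to\T(\hat A)$: the restriction of a Teichm\"uller map $\phi:X_1\to X_2$ to the geodesic representative of $A$ in $X_1$ is quasiconformal onto its \emph{image}, but that image is not the straightened subsurface of $X_2$, and the operation of straightening $\partial A$ and capping with punctured disks does not carry a $K$-quasiconformal map of bordered subsurfaces to a $K$-quasiconformal map of the cusped surfaces; when $\partial A$ is long in $X_1$ or $X_2$ the conformal structure of the capped-off piece is not controlled by the structure of $S$ at all. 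This is precisely why the paper does not attempt a pointwise Lipschitz comparison: it restricts to the region of $\T(S)$ where $\ell_\sigma(\partial A)<\varepsilon$ (which is preserved by $f$ since $f(A)=A$) and invokes Minsky's product region theorem (Theorem \ref{thm.minsky}), which gives $d_{\T}(\sigma,f^m(\sigma))\ge d_{\T(A)}(\pi(\sigma),f|_A^m(\pi(\sigma)))-\delta$ only up to an additive error $\delta$; that error is harmless because it vanishes after dividing by $m$, yielding $\ell_{\T}(f|_A)\le\ell_{\T}(f)$. Your proposal needs either this product-region argument or a genuinely new proof of the comparison; the ``restrict the Teichm\"uller map'' argument does not go through.

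A secondary inaccuracy: you assert that running the Lanier--Margalit proof produces a systole $c$ with $i(c,\hat f(c))=1$ on the nose, and then you finish with Step 3 of Theorem \ref{thm.wellsuited}. The systole argument in \cite{LM_NG} only gives $i(c,f(c))\le 1$, and the disjoint case requires further analysis of the configuration (e.g.\ whether $c\cup f(c)$ separates), so your shortcut requires re-opening their proof rather than quoting Theorem \ref{thm.LMtrans}. The paper avoids this by using Lanier--Margalit as a black box ($\PMod(A)\le\llangle f|_A\rrangle$ in $\Mod(A)$, valid for punctured surfaces) and then proving a separate locality statement (Theorem \ref{thm.locality}): using Lemma \ref{lem.sequence} any two non-separating curves in $S$ are joined by a chain of disjoint non-bounding-pair curves, each consecutive pair can be pushed into $A$ by change of coordinates (here genus of $A\ge 2$ suffices for that step), and transitivity of $\PMod(A)$ on non-separating curves in $A$ then shows $N_f(S)$ is connected. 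If you repair the translation-length comparison, I would recommend adopting this locality argument for the transfer step as well, since it cleanly handles the cases your single-curve shortcut misses.
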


\begin{remark}
    In fact, Theorem \ref{thm.locality} still holds when $A$ has genus $\ge 1$ and $S$ is of genus at least three. See \cite[Theorem 3.1]{BKW_reducible}.
\end{remark}

In the rest of this section, we describe the proof of Theorem \ref{thm.bkw}, following \cite{BKW_reducible}. From now on, suppose that $S$ is a closed surface. The key observation is that normal generation of a mapping class can be detected by its local behavior:

\begin{theorem}[Locality of normal generation] \label{thm.locality}
    Let $f \in \Mod(S)$. Suppose that there exists a subsurface $A \subset S$ of genus at least three such that $f(A) = A$. If the normal closure of $f|_A$ in $\Mod(A)$ contains $\PMod(A)$, then $$\llangle f \rrangle = \Mod(S).$$
\end{theorem}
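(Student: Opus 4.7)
The plan is to prove $\PMod(S) \leq \cN := \llangle f \rrangle_{\Mod(S)}$; since $S$ is closed, this forces $\cN = \Mod(S)$. The strategy is to transfer the hypothesis from $A$ to $S$ via the extension-by-identity homomorphism $\iota \colon \Mod(A) \to \Mod(S)$, by combining the normality of $\cN$ with Harer's perfectness theorem for $\PMod(A)$.

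First I would set $\cM := \iota^{-1}(\cN) \leq \Mod(A)$; this is a normal subgroup of $\Mod(A)$ because $\cN$ is normal in $\Mod(S)$. The key identity is
\[
[\iota(h), f] \;=\; \iota\bigl([h, f|_A]\bigr) \quad \text{in } \Mod(S), \quad \text{for every } h \in \Mod(A).
\]
This is a direct check at the level of representatives: both sides restrict to $[h, f|_A]$ on $A$ and to the identity on $S \setminus A$, since $f(A) = A$ and $\iota(h)|_{S \setminus A} = \id$. As $f \in \cN$, the left-hand side lies in $\cN$, so $[h, f|_A] \in \cM$ for every $h \in \Mod(A)$. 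Consequently, in the quotient $\Mod(A) / \cM$ the image $\overline{f|_A}$ is central, and hence $\llangle \overline{f|_A} \rrangle = \langle \overline{f|_A} \rangle$ is cyclic, in particular abelian.

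Next, the hypothesis $\PMod(A) \leq \llangle f|_A \rrangle_{\Mod(A)}$ descends to $\overline{\PMod(A)} \leq \langle \overline{f|_A} \rangle$, which is abelian. By Harer's theorem (Theorem \ref{thm.harer}), $\PMod(A)$ is perfect since $A$ has genus at least three, so $\overline{\PMod(A)}$ is both perfect and abelian, hence trivial. This gives $\iota(\PMod(A)) \leq \cN$. Now pick a non-separating simple closed curve $c$ in the interior of $A$; a short set-theoretic check---$A \setminus c$ is connected and each component of $\overline{S \setminus A}$ is attached to it along $\partial A$---shows that $c$ remains non-separating in $S$. Hence $T_c = \iota(T_c) \in \cN$, and by the normality of $\cN$ together with the transitive action of $\Mod(S)$ on non-separating simple closed curves, every Dehn twist along a non-separating curve of $S$ lies in $\cN$. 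Theorem \ref{thm.dehnlickorish} (Dehn--Lickorish) then delivers $\PMod(S) \leq \cN$, completing the argument.

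The main obstacle I anticipate is verifying the commutator identity in the second paragraph cleanly, since one must handle the extension by identity carefully so that the two sides agree as elements of $\Mod(S)$ rather than merely up to twists around $\partial A$. Choosing a representative $F$ of $f$ with $F(A) = A$ as a set, and representatives of $\iota(h)$ that are the identity on $S \setminus A$, is what makes the computation go through on the nose and reduces the theorem to a quotient-group argument powered by Harer's perfectness theorem.
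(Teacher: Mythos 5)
Your group-theoretic skeleton (centrality of the image of $f|_A$ in a quotient, Harer's perfectness of $\PMod(A)$, then Dehn--Lickorish plus change of coordinates) is appealing, but it rests on an object that does not exist: there is no well-defined ``extension-by-identity'' homomorphism $\iota\colon \Mod(A)\to\Mod(S)$ when $\Mod(A)$ is the mapping class group of $A$ regarded as a surface in its own right (punctures/free boundary), which is the group the theorem's hypothesis and the applications of Theorems \ref{thm.harer} and \ref{thm.LMtrans} refer to. Extension by the identity is a homomorphism only from the boundary-fixing group $\Mod(A,\partial A)$: two representatives of the same class of $\Mod(A)$, both equal to the identity near $\partial A$, can have extensions that differ in $\Mod(S)$ by a nontrivial multitwist about the components of $\partial A$ (a boundary twist is trivial in $\Mod(A)$ but extends to an essential twist of $S$). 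Moreover $\iota(h)$ is not even defined at the level of representatives when $h$ permutes the ends of $A$, and you quantify over all $h\in\Mod(A)$ (and $f|_A$ itself may permute the boundary components). Consequently $\cM:=\iota^{-1}(\cN)$, the quotient $\Mod(A)/\cM$, and the key assertion ``$\iota(\PMod(A))\le\cN$'' are not defined as stated. The caveat at the end of your proposal fixes the commutator identity for chosen representatives, but not the well-definedness of $\iota$ on mapping classes, which is exactly what the quotient argument needs; and the ambiguity is not harmless, since transporting the hypothesis $\PMod(A)\le\llangle f|_A\rrangle_{\Mod(A)}$ to $\Mod(A,\partial A)$ (where the extension homomorphism does exist) introduces precisely those boundary multitwists, which are not a priori in $\llangle f\rrangle$. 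So what your argument naturally yields is membership in $\cN$ only up to twists about $\partial A$.

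The paper avoids this by never extending mapping classes algebraically: the hypothesis is used only through its action on curves. By Lemma \ref{lem.sequence}, any two non-separating curves are joined by a chain of disjoint curves that are not bounding pairs; such a pair can be moved into $A$ by a change of coordinates, and transitivity of $\PMod(A)$ on non-separating curves in $A$ together with $\PMod(A)\le\llangle f|_A\rrangle$ produces conjugates of $f^{\pm1}$ in $\Mod(S)$ carrying one curve to the other (only the action of some extension on curves is needed, which is unambiguous). This proves connectivity of $N_f(S)$, and the well-suited criterion (Theorem \ref{thm.wellsuited}) concludes. If you wish to keep the algebraic route, you would have to work in $\Mod(A,\partial A)$, replace conjugation by $f|_A$ with the automorphism it induces there, and explicitly control the boundary multitwists; as written, the transfer from $A$ to $S$ is a genuine gap.
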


To prove Theorem \ref{thm.locality}, we first show the following lemma. Two disjoint non-separating simple closed curves $c, c'$ are said to form a \emph{bounding pair}\index{bounding pair} if $S - (c \cup c')$ is disconnected.

\begin{lemma} \label{lem.sequence}
    Let $c, c'$ be non-separating simple closed curves. Then there exists a sequence of simple closed curves $a_0, \cdots, a_k$ such that \begin{enumerate}
        \item $a_0 = c$ and $a_k = c'$;
        \item for each $i = 1, \cdots, k$,  $a_{i-1}$ and $a_i$ are disjoint;
        \item for each $i = 1, \cdots, k$, $a_i$ is non-separating;
        \item for each $i = 1, \cdots, k$, $a_{i-1}$ and $a_i$ do not form a bounding pair.
    \end{enumerate}
\end{lemma}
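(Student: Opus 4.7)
My plan is to first produce a sequence satisfying (1)--(3) via the classical connectedness of the non-separating curve graph of $S$, and then to modify it to also satisfy (4) by inserting a well-chosen curve whenever two consecutive curves form a bounding pair. Recall that the non-separating curve graph (vertices: non-separating simple closed curves; edges: disjoint pairs) on $S_g$ with $g \ge 2$ is well known to be connected, so there exists a sequence $c = b_0, b_1, \ldots, b_n = c'$ of non-separating simple closed curves with $b_{i-1}$ and $b_i$ disjoint for each $i$.

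The key claim driving the modification is the following: if $\alpha$ and $\beta$ are disjoint non-separating simple closed curves on $S$ forming a bounding pair, then there exists a non-separating simple closed curve $\gamma$ disjoint from both $\alpha$ and $\beta$ such that neither $\{\alpha, \gamma\}$ nor $\{\gamma, \beta\}$ is a bounding pair. Granting this, for each consecutive pair $(b_{i-1}, b_i)$ in the initial sequence that forms a bounding pair, I replace it by the two-step detour $(b_{i-1}, \gamma_i, b_i)$, and the resulting sequence satisfies all four properties. To prove the claim, cut $S$ along $\alpha \cup \beta$ to obtain two connected components $U_1, U_2$, each a surface with two boundary components and with $g_1 + g_2 = g - 1$, where $g_i$ denotes the genus of $U_i$. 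Since $g \ge 2$, at least one component has positive genus; say $g_1 \ge 1$. Then $U_1$ is not an annulus, and I take $\gamma \subset U_1$ to be a simple closed curve that is non-separating in $U_1$, for example an $a$-cycle of a handle of $U_1$.

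To verify $\gamma$ has the required properties, I use the standard fact that two disjoint non-separating simple closed curves on $S$ form a bounding pair if and only if the second separates the complement of the first. Since $\gamma$ is non-separating in $U_1$, the subsurface $U_1 \setminus \gamma$ is connected, and because $U_2$ is connected and both $U_1 \setminus \gamma$ and $U_2$ meet $\beta$ in their closures, the set $(S \setminus \alpha) \setminus \gamma = (U_1 \setminus \gamma) \cup \beta \cup U_2$ is connected. This simultaneously shows that $\gamma$ is non-separating in $S$ and that $\{\alpha, \gamma\}$ is not a bounding pair; the symmetric argument with $\alpha$ and $\beta$ swapped handles $\{\gamma, \beta\}$. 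The main obstacle I anticipate is precisely this connectedness verification for the inserted curve $\gamma$, which crucially uses the assumption $g \ge 2$ so that the pieces $U_1, U_2$ do not both degenerate to annuli.
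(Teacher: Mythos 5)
Your proof is correct and follows essentially the same route as the paper: first obtain a path of disjoint non-separating curves (you cite connectivity of the non-separating curve graph, where the paper derives this from Masur--Minsky's connectivity result plus the standard surgery removing separating curves), then insert a curve in a complementary component whenever two consecutive curves form a bounding pair. Your verification of the inserted curve $\gamma$ (the count $g_1+g_2=g-1$ giving a positive-genus component, and the connectedness check showing neither new pair bounds) simply fills in a detail that the paper asserts without proof, and it is valid.
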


\begin{proof}
    Let $c, c'$ be non-separating simple closed curves. By \cite[Lemma 2.1]{MM_CC}, there exists a sequence of simple closed curves $a_0, \cdots, a_k$ such that $a_0 = c$, $a_k = c'$, and $a_{i-1}$ and $a_i$ are disjoint for each $i = 1, \cdots, k$ (see also Theorem \ref{thm.MMCC}). We modify this sequence so that it also satisfies (3) and (4).

    It is easy to see that we can make the sequence $a_0, \cdots, a_k$ satisfy (3) (see for instance \cite[Theorem 4.4]{FM_primer}). Indeed, if $a_i$ is separating for some $i$, then there are two cases:
    \begin{itemize}
        \item if $a_{i-1}$ and $a_{i+1}$ are contained in the same connected component of $S - a_i$, then we can replace $a_i$ with a non-separating simple closed curve in the other component of $S - a_i$.
        \item otherwise, $a_{i-1}$ and $a_{i+1}$ are already disjoint. Hence we can remove $a_i$ from the sequence.
    \end{itemize}

    We now suppose that the sequence $a_0, \cdots, a_k$ satisfies (1), (2), and (3). To make it satisfy (4) as well, consider the case when $a_{i-1}$ and $a_{i}$ form a bounding pair for some $i$. In this case, we can find a non-separating curve $b_i$ in a component of $S - (a_{i-1} \cup a_i)$ such that both pairs $a_{i-1}$ and $b_i$, and  $b_i$ and $a_i$ are not bounding pairs. Then we can insert $b_i$ between $a_{i-1}$ and $a_i$ to get a new sequence. This yields the desired sequence.
\end{proof}

\subsection*{Proof of Theorem \ref{thm.locality}} We are now ready to prove the locality of normal generation. Let $f \in \Mod(S)$ and $A\subset S$ be as given. By a well-suited criterion (Theorem \ref{thm.wellsuited}), it suffices to show that $N_f(S)$ is connected.

Let $c, c'$ be two vertices in $N_f(S)$. In other words, we take two non-separating simple closed curves. By Lemma \ref{lem.sequence}, we may assume that $c, c'$ are disjoint and do not form a bounding pair, to show that there exists a path in $N_f(S)$ between them.

Denoting by $g$ the genus of $S$, we have that $S - (c \cup c')$ is of genus $g - 2$ with four punctures, since $c$ and $c'$ do not form a bounding pair. Since $A$ has genus at least three (in particular, at least two), there exists $h \in \Mod(S)$ such that $h(c)$ and $h(c')$ are contained in $A$. Since $\PMod(A)$ acts transitively on the space of non-separating curves in $A$ and $\PMod(A)$ is contained in the normal closure of $f|_A$ in $\Mod(A)$, there exist $f_1, \cdots, f_n \in \Mod(A)$ such that $$h(c') = (f_1^{-1} f|_A f_1) \circ \cdots \circ (f_n^{-1} f|_A f_n)(h(c)).$$
We then extend the $f_i$'s to $S$ and obtain $$c' =  (h^{-1}f_1^{-1} f f_1 h) \circ \cdots \circ (h^{-1}f_n^{-1} f f_n h)(c).$$
This implies that $c$ and $c'$ are connected by a path in $N_f(S)$. Therefore, $N_f(S)$ is connected, completing the proof.
\qed

\subsection*{Product region theorem for Teichm\"uller space}\index{Minsky's product region theorem} The last ingredient of the proof of Theorem \ref{thm.bkw} is about the structure of the Teichm\"uller space $\T(S)$.
Let $c_1, \cdots, c_k$ be a disjoint essential simple closed curves on $S$ which are not isotopic to each other. Then a marked hyperbolic structure on $S$ is determined by a restricted marked hyperbolic structure on $S - \cup_{i = 1}^k c_i$, lengths of $c_i$'s, and how much it is twisted along $c_i$'s. and for each $c_i$, the length and the amount of twisting along $c_i$.  For each $\sigma \in \T(S)$ and $c_i$, denote by $\ell_{\sigma}(c_i) \in \R_{>0} $ the length of $c_i$ with respect to $\sigma$ and by $\tau_{\sigma}(c_i) \in \R$ the angle that $\sigma$ is the angle twisted along $c_i$. Adding more curves if necessary, these quantities $\ell_{\sigma}$ and $\tau_{\sigma}$ parametrize $\T(S)$. This parametrization is called Fenchel--Nielsen parametrization\index{Fenchel--Nielsen parametrization}.

We set $\pi(\sigma)$ to be the restriction of $\sigma$ on $S - \cup_{i = 1}^k c_i$. Using the upper half-plane model for the hyperbolic plane\index{hyperbolic plane} $\bH^2 = \{ (x, y) \in \R^2 : y > 0 \}$, we then have a map $$\Pi : \T(S) \to \T(S - \cup_{i = 1}^k c_i) \times \prod_{i = 1}^k \bH_i^2$$
given by $\Pi(\sigma) = (\pi(\sigma), (\tau_{\sigma}(c_1), \ell_{\sigma}(c_1)^{-1}), \cdots, (\tau_{\sigma}(c_k), \ell_{\sigma}(c_k)^{-1}))$, where the $\bH_i^2$'s are copies of $\bH^2$.

We set $X := \T(S - \cup_{i = 1}^k c_i) \times \prod_{i = 1}^k \bH^2$ and equip it with a metric $$d_X := \max \{ d_{\T(S - \cup_{i = 1}^k c_i)}, d_{\bH^2_1}, \cdots, d_{\bH^2_k} \}.$$ Minsky studied how the Teichm\"uller metric $d_{\T}$ on $\T(S)$ and the metric $d_X$ on $X$ are related via the map $\Pi$  \cite{Minsky_product}. He proved the following theorem, asserting that the region of $\T(S)$ on which the length of $\cup_{i = 1}^k c_i$ is short has an almost product structure in a metric sense. The surprising part is that there is only an additive error, not a multiplicative one (compare with a definition of a quasi-isometry):

\begin{theorem}[Minsky's product region theorem, {\cite[Theorem 6.1]{Minsky_product}}]\index{Minsky's product region theorem} \label{thm.minsky} \label{thm.product}
	The above map $\Pi : \T(S) \to X$ is a homeomorphism. Moreover, for $\varepsilon > 0$ sufficiently small, there exists $\delta = \delta(S, \varepsilon)$ such that $$|d_{\T}(\sigma_1, \sigma_2) - d_X(\Pi(\sigma_1), \Pi(\sigma_2))| \le \delta$$ for any $\sigma_1, \sigma_2 \in \T(S)$ such that $\ell_{\sigma_1}(\cup_{i = 1}^k c_i) < \varepsilon$ and $ \ell_{\sigma_2}(\cup_{i = 1}^k  c_i) < \varepsilon$.
\end{theorem}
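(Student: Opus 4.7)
The plan is to split the theorem into two parts: the topological assertion that $\Pi$ is a homeomorphism, and the quasi-isometric-type comparison of the two metrics on the short-curve region. The first is essentially a repackaging of Fenchel–Nielsen coordinates, while the second requires the analytic heart of the argument, using Kerckhoff's extremal-length formula for the Teichm\"uller metric.

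To show $\Pi$ is a homeomorphism, I would extend $\{c_1, \dots, c_k\}$ to a pants decomposition of $S$. Fenchel–Nielsen coordinates in lengths and twists along all pants curves give a real-analytic diffeomorphism of $\T(S)$ with a product of a half-line and a line for each pants curve. The coordinates attached to the additional pants curves, together with the length data $\ell_\sigma(c_i)$ (viewed as boundary/cusp parameters on the cut-open surface), parametrize $\T(S - \cup c_i)$; the remaining data $(\tau_\sigma(c_i), 1/\ell_\sigma(c_i))$ for $i \le k$ give points in $\bH_i^2$. This repackaging is a real-analytic bijection with real-analytic inverse, so $\Pi$ is a homeomorphism.

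For the metric comparison, the key tool is Kerckhoff's formula
$$d_{\T}(\sigma_1, \sigma_2) = \frac{1}{2} \log \sup_{\gamma} \frac{\operatorname{Ext}_{\sigma_2}(\gamma)}{\operatorname{Ext}_{\sigma_1}(\gamma)},$$
with the supremum over simple closed curves $\gamma$ on $S$. The strategy is to show that on the thin part where every $c_i$ has hyperbolic length at most $\varepsilon$, the supremum essentially decouples: the curves that come close to realizing it either live essentially in $S - \cup c_i$ (yielding, up to an additive error, $d_{\T(S-\cup c_i)}(\pi(\sigma_1), \pi(\sigma_2))$) or spiral around one of the $c_i$'s (yielding $d_{\bH^2_i}$ read off in the $(\tau, 1/\ell)$-coordinates). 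The maximum of these contributions recovers $d_X(\Pi(\sigma_1), \Pi(\sigma_2))$ on the nose.

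The main obstacle is controlling the extremal-length comparisons with only an \emph{additive} error, not a multiplicative one. Around each short $c_i$, the collar lemma produces an embedded annulus $A_i$ whose modulus is $\pi/\ell_\sigma(c_i)$ up to a factor tending to $1$ as $\ell_\sigma(c_i) \to 0$; inside $A_i$ the extremal length of a curve spiraling around $c_i$ is explicit and matches the hyperbolic-plane distance in $\bH_i^2$. Outside the collars one must compare the extremal length on $S$ with the extremal length on the cut-open surface $S - \cup c_i$ endowed with cusps. Here Mumford-compactness-type bounds on the thick part enter: the hyperbolic geometry of $S$ restricted to the complement of the collars is close to a hyperbolic structure on $S - \cup c_i$ representing $\pi(\sigma)$, and one constructs explicit quasi-conformal maps with uniformly controlled dilatation to transfer extremal-length estimates across. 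Combining these comparisons with subadditivity of extremal length under decomposition along the $A_i$'s, and then feeding the result into Kerckhoff's formula, converts the multiplicative errors into a bounded additive error in log scale, yielding the claimed bound $\delta = \delta(S, \varepsilon)$.
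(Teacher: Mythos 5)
This theorem is not proved in the paper at all: it is imported verbatim as \cite[Theorem 6.1]{Minsky_product}, so there is no in-paper argument to compare against. Your sketch should therefore be judged against Minsky's original proof, and as a roadmap it is faithful to it: the homeomorphism statement is indeed just a repackaging of Fenchel--Nielsen coordinates, and the metric comparison does go through Kerckhoff's formula $d_{\T}(\sigma_1,\sigma_2)=\tfrac12\log\sup_\gamma \operatorname{Ext}_{\sigma_2}(\gamma)/\operatorname{Ext}_{\sigma_1}(\gamma)$ combined with an extremal-length estimate on the thin region, with collars of modulus comparable to $\pi/\ell_\sigma(c_i)$ supplying the $\bH^2$ factors.

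The gap is that the entire analytic content of the theorem is concentrated in the sentence ``the supremum essentially decouples,'' which you assert rather than prove. What is actually needed is Minsky's main estimate (his Theorem 5.1): for every simple closed curve $\gamma$ and every $\sigma$ with $\ell_\sigma(\cup_i c_i)<\varepsilon$, the extremal length $\operatorname{Ext}_\sigma(\gamma)$ is comparable, with multiplicative constants depending only on $S$ and $\varepsilon$ and uniform in $\gamma$ and $\sigma$, to the maximum of the extremal lengths of the pieces of $\gamma$ in the components of $S-\cup_i c_i$ and of explicit quadratic expressions in the intersection and twisting numbers about each $c_i$. One direction of the distance inequality (that $d_X$ is bounded above by $d_{\T}$ plus an additive constant) follows from monotonicity of extremal length under inclusion of subsurfaces; the other direction is the hard one and is exactly this uniform estimate. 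Your appeal to ``subadditivity of extremal length under decomposition along the $A_i$'s'' does not supply it: the inequality in the needed direction is false without the thinness hypothesis and is precisely what occupies the bulk of Minsky's paper. The quasiconformal-transfer device you propose for comparing the complement of the collars with the cut-open surface is a plausible alternative to Minsky's direct extremal-length arguments, but as written it is a placeholder for, not a proof of, the uniform dilatation bound. So the approach is right, but the proposal stops exactly where the theorem begins.
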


Using this product region theorem, we finish the proof of Theorem \ref{thm.bkw}.

\subsection*{Proof of Theorem \ref{thm.bkw}} Let $f \in \Mod(S)$ and $A \subset S$ be as given. We consider $\partial A$ as the union of disjoint essential simple closed curves which are not isotopic to each other. Fix $\varepsilon > 0$ small enough so that Theorem \ref{thm.product} applies, and let $\delta > 0$ be the constant given by it.

Let $\sigma \in \T(S)$ be a marked hyperbolic structure such that $\ell_{\sigma}(\partial A) < \varepsilon$. Since $A$ is invariant under $f$, we have $\ell_{f(\sigma)}(\partial A) < \varepsilon$ as well. Hence, it follows from Theorem \ref{thm.product} that $$d_{\T}(\sigma, f^m (\sigma)) \ge d_{\T(A)}(\pi(\sigma), f|_A^m (\pi(\sigma))) - \delta$$  for all $m \in \N$. This implies that $$\ell_{\T}(f|_A) \le \ell_{\T}(f) \le \frac{1}{2} \log 2,$$ therefore  by Theorem \ref{thm.LMtrans}, $\PMod(A)$ is contained in the normal closure of $f|_A$ in $\Mod(A)$. By Theorem \ref{thm.locality}, we conclude that $$\llangle f \rrangle = \Mod(S),$$
showing the normal generation.
\qed

\section{Asymptotic translation lengths on curve graphs} \label{sec.curvegraph}

In the rest of the chapter, let $S = S_g$ be a closed surface of genus $g \ge 2$. There is another metric space on which the mapping class group $\Mod(S)$ acts by isometries, namely, the curve graph. In this section, we discuss the asymptotic translation length on the curve graph and its relation to normal generation.

\subsection*{Curve graph}
The \emph{curve graph}\index{curve graph} $\C(S)$ of $S$ was first introduced by Harvey \cite{Harvey_CC}. It is a graph whose vertices are isotopy classes of essential simple closed curves on $S$, and where two vertices are connected by an edge if they have disjoint representatives. We equip the curve graph with a simplicial metric $d_{\C}$. Some geometric properties of $\C(S)$ were studied by Masur and Minsky \cite{MM_CC}.

\begin{theorem}[Masur--Minsky] \label{thm.MMCC}
    The curve graph  $\C(S)$ is a connected, unbounded, and Gromov hyperbolic metric space.
\end{theorem}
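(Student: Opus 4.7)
The plan is to establish the three properties---connectedness, unboundedness, and Gromov hyperbolicity---separately, in increasing order of difficulty. Connectedness and unboundedness are elementary consequences of intersection-number arithmetic on simple closed curves, while hyperbolicity is the deep content of \cite{MM_CC}.

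For connectedness, I would induct on the geometric intersection number $i(c, c')$. The base case $i(c, c') = 0$ gives $d_{\C}(c, c') \le 1$. If $i(c, c') \ge 1$, a standard surgery replaces $c$ by an essential simple closed curve $c''$ that is disjoint from $c$ and satisfies $i(c'', c') < i(c, c')$: resolve an intersection point between $c$ and $c'$ by concatenating suitable subarcs of $c$ and $c'$, and then isotope slightly off $c$. The hypothesis that $S$ is closed of genus at least two ensures the resulting curve can be chosen essential. The inductive step yields an edge-path in $\C(S)$ from $c$ to $c'$.

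For unboundedness, I would prove the inequality
\[
i(c, c') \le (\textrm{const})^{d_{\C}(c, c')}
\]
by induction along a geodesic $c = c_0, c_1, \ldots, c_n = c'$ in $\C(S)$, using the elementary fact that adjacency in $\C(S)$ (disjoint representatives) gives strong control over how intersection numbers with a third curve can change between neighbors. Combined with the observation that $i(c, T_d^k(c))$ grows without bound as $k \to \infty$ for any two curves with $i(c, d) \ge 1$ (this is just $|k| \cdot i(c, d)^2$ asymptotically), this shows that $\C(S)$ has pairs of vertices at arbitrarily large distance.

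The main obstacle is hyperbolicity. I would follow the original Masur--Minsky strategy using Teichm\"uller geodesics. Define a coarse projection $\mu : \T(S) \to \C(S)$ sending each marked hyperbolic structure to one of its systoles. Given curves $a, b$, pick structures $X_a, X_b$ in which $a, b$ are respectively short, and consider the image under $\mu$ of the Teichm\"uller geodesic from $X_a$ to $X_b$. The two technical steps are: (i) showing that this image is a uniform reparametrized quasi-geodesic in $\C(S)$; and (ii) establishing the contraction property that the $\mu$-image of a ball in $\T(S)$ far from a Teichm\"uller geodesic has uniformly bounded diameter in $\C(S)$. Step (ii) is the genuinely hard part and requires analyzing how the set of short curves along a Teichm\"uller geodesic evolves, controlling horizontal and vertical foliation lengths via the singular Euclidean structure. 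Hyperbolicity then follows from a criterion of Bowditch (or of Masur--Minsky themselves) saying that slim shadow-triangles with contracting projections force $\delta$-hyperbolicity of the target. In a modern expository treatment one could instead cite the purely combinatorial \emph{unicorn path} proof of Hensel--Przytycki--Webb, which avoids Teichm\"uller theory altogether and yields a uniform constant $\delta$ independent of genus.
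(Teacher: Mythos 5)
The paper does not prove this statement at all: it is quoted as an external theorem of Masur--Minsky (with connectivity going back to the earlier literature), so there is no internal argument to compare with. Judged on its own, your sketch of connectedness (surgery induction on $i(c,c')$, as in Farb--Margalit) and your outline of hyperbolicity (Masur--Minsky's Teichm\"uller-geodesic shadows with the contraction property, or alternatively the unicorn-path proof) are accurate descriptions of known proofs, with the understanding that for hyperbolicity you are essentially citing the hard part rather than proving it --- which is exactly what the paper does.

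However, your unboundedness argument has a genuine gap: the inequality $i(c,c') \le (\mathrm{const})^{d_{\C}(c,c')}$ is false. Adjacency in $\C(S)$ gives no control whatsoever on intersection numbers with a third curve: if $b$ is any curve whose complement supports two curves $c,c'$, then $d_{\C}(c,c') \le 2$ via $b$, while $i(c,c')$ can be made arbitrarily large (e.g.\ replace $c'$ by a high power of a Dehn twist applied to $c$ inside $S\setminus b$). So your proposed induction along a geodesic fails at the very first step, and indeed the only true inequality of this type runs in the opposite direction (Hempel's bound $d_{\C}(c,c') \le 2 + 2\log_2 i(c,c')$ for intersecting curves), which bounds distance from above and therefore cannot produce far-apart vertices. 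Infinite diameter is not elementary intersection arithmetic: the standard proofs either use a sequence of curves converging in $\mathcal{PML}$ to a filling lamination (Kobayashi, Luo) or the Masur--Minsky result, quoted later in this paper, that a pseudo-Anosov mapping class has positive asymptotic translation length on $\C(S)$, so that its orbit is unbounded. You should replace the unboundedness paragraph by one of these arguments (or an explicit citation for it).
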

Here, a geodesic metric space is called Gromov hyperbolic\index{Gromov hyperbolicity} if there exists $\delta > 0$ so that every geodesic triangle is $\delta$-thin, i.e., the $\delta$-neighborhood of one side contains the other two sides.

It is clear from the definition that $\Mod(S)$ acts isometrically on $\C(S)$. We use the notation $$\ell_{\C}(\cdot)$$ for the asymptotic translation length on $\C(S)$.
Since there exists a coarsely $\Mod(S)$-equivariant and coarsely Lipschitz map $\T(S) \to \C(S)$ \cite{MM_CC}, there exists $c = c(S) > 0$ such that $\ell_{\C}(f) \le c \cdot \ell_{\T}(f)$ for all $f \in \Mod(S)$. A more effective version of this estimate is obtained in \cite{GHKL_lipschitz}. On the other hand, $\ell_{\C}$ and $\ell_{\T}$ are not close to each other in general \cite{bader2025comparing}.

The Gromov hyperbolicity of $\C(S)$ provides more tools to study the dynamics of $\Mod(S)$ on $\C(S)$.
It was also shown in \cite[Proposition 3.6]{MM_CC} that if $f \in \Mod(S)$ is pseudo-Anosov, then $$\ell_{\C}(f) > 0.$$
It is easy to see that periodic mapping classes and reducible mapping classes have zero asymptotic translation lengths on the curve graph since they have powers fixing a vertex in $\C(S)$. In contrast, note that reducible mapping classes can have positive asymptotic translation lengths on $\T(S)$.

Hence it is natural to ask whether pseudo-Anosovs with small $\ell_{\C}$ are normal generators, in analogy with to Theorem \ref{thm.LMtrans}. This question is wide open, and this section is devoted to introduce some work towards it (cf. \cite[Question 1.2]{BKSW_asymptotic}).

\subsection*{Minimal asymptotic translation lengths}

We first need to clarify what ``small translation lengths" should mean to make the question meaningful. In the case of Teichm\"uller space, a constant $\frac{1}{2} \log 2$ gives a threshold, and notably, is independent of the surface $S=S_g$. This is related to the distribution of asymptotic translation lengths of  Torelli elements\index{Torelli element} on Teichm\"uller spaces. Recall that the Torelli group\index{Torelli group} $\I_g < \Mod(S_g)$ consists of mapping classes acting trivially on $H_1(S_g)$, and is a proper normal subgroup of $\Mod(S_g)$. Hence the threshold must be small enough to exclude Torelli elements.  Farb, Leininger, and Margalit \cite{FLM_lower} showed that, if we denote by 
\begin{equation} \label{eqn.mintrteichtor}
L_{\T}(\I_g) := \inf \{ \ell_{\T}(f) : f \in \I_g \text{ is pseudo-Anosov}\},\index{asymptotic translation length!minimal asymptotic translation length}
\end{equation}
then there exist constants $C_1, C_2 > 0$ such that \begin{equation} \label{eqn.parishotel}
C_1 \le L_{\T}(\I_g) \le C_2
\end{equation} for all $g \ge 2$. This explains why we could expect that pseudo-Anosov mapping classes with $\ell_{\T}$ smaller than a certain universal constant can be a normal generator.

On the other hand, things are very different when we consider asymptotic translation lengths on curve graphs. In analogy with \eqref{eqn.mintransteich} and \eqref{eqn.mintrteichtor}, we define\index{asymptotic translation length!minimal asymptotic translation length}
$$\begin{aligned}
    L_{\C}(g) & := \inf \{ \ell_{\C}(f) : f \in \Mod(S_g) \text{ is pseudo-Anosov}\}; \\
    L_{\C}(\I_g) & := \inf \{ \ell_{\C}(f) : f \in \I_g \text{ is pseudo-Anosov}\}.
\end{aligned}$$
The following asymptotes of $L_{\C}(g)$ and $L_{\C}(\I_g)$ were shown by Gadre and Tsai \cite{GT_minimal} and by Baik and Shin \cite{BaikShin_Torelli} respectively: there exists $C > 1$ such that
\begin{equation} \label{eqn.asymp}
\frac{1}{C \cdot g^2} \le L_{\C}(g) \le \frac{C}{g^2} \quad \text{and} \quad \frac{1}{C \cdot g} \le L_{\C}(\I_g) \le \frac{C}{g} 
\end{equation}
for all $g \ge 2$.

\subsection*{Small translation lengths and normal generation} In view of \eqref{eqn.asymp}, we formulate the following question pertaining to the relation between translation lengths on curve graphs and normal generation as follows:
\begin{question} \label{ques.ng}
Does there exist a constant $C > 0$ such that if $f \in \Mod(S_g)$ is  pseudo-Anosov with $\ell_{\C}(f) \le \frac{C}{g}$, then $f$ is a normal generator?
\end{question}

One can also ask whether the Torelli group is the only obstruction for a pseudo-Anosov with small $\ell_{\C}$ to be a normal generator:

\begin{question} \label{ques.curvegraph}
Does there exist a constant $C > 0$ such that if $f \in \Mod(S_g)$ is a non-Torelli pseudo-Anosov with $\ell_{\C}(f) \le \frac{C}{g}$, then $f$ is a normal generator?
\end{question}

Both questions are still open. However, in our joint work with Wu \cite{BKW_minimal}, we gave an upper bound for $C$ in Question \ref{ques.curvegraph}, if its answer is yes.

\begin{theorem}[Baik--Kim--Wu] \label{thm.curvegraphconj}
    For each $g \ge 578$, there exists a non-Torelli pseudo-Anosov $f_g \in \Mod(S_g)$ such that $$\ell_{\C}(f_g) \le \frac{1152}{g - 577} \quad \text{and} \quad \llangle f_g \rrangle \neq \Mod(S_g).$$
\end{theorem}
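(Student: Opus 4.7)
The plan is to construct $f_g$ explicitly as a pseudo-Anosov mapping class lying in a proper normal subgroup of $\Mod(S_g)$ distinct from the Torelli group, together with a careful upper bound on its translation length on $\C(S_g)$. The most natural candidate for the ambient normal subgroup is a level-$N$ congruence subgroup
\[
\Mod(S_g)[N] := \ker\bigl(\Mod(S_g) \to \Sp(2g,\Z/N)\bigr)
\]
for some small $N \ge 2$, which is proper and normal in $\Mod(S_g)$ and strictly contains $\I_g$. By \eqref{eqn.Dehnhomology}, the $N$-th power $T_c^N$ of any Dehn twist lies in $\Mod(S_g)[N]$, and if $c$ is non-separating then $T_c^N$ acts non-trivially on $H_1(S_g;\Z)$, so $T_c^N \notin \I_g$.

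For the construction itself, I would apply Penner's theorem (Theorem \ref{thm.pennerconstruction}) to a carefully chosen pair of filling multicurves $c = c_1 \cup \cdots \cup c_k$ and $d = d_1 \cup \cdots \cup d_m$ of non-separating components on $S_g$, setting
\[
f_g := T_{c_1}^N T_{c_2}^N \cdots T_{c_k}^N \, T_{d_1}^{-N} T_{d_2}^{-N} \cdots T_{d_m}^{-N}.
\]
Penner's theorem immediately gives that $f_g$ is pseudo-Anosov; each factor lies in $\Mod(S_g)[N]$, hence so does $f_g$; and with the filling configuration chosen so that the composite homology action of the factors does not cancel, one also has $f_g \notin \I_g$. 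Consequently $\llangle f_g \rrangle \le \Mod(S_g)[N] \subsetneq \Mod(S_g)$, so $f_g$ is not a normal generator.

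The crux of the argument is then the translation-length estimate $\ell_{\C}(f_g) \le \frac{1152}{g-577}$. For this I would follow the strategy underlying the upper bounds in \eqref{eqn.asymp}, due to Gadre-Tsai \cite{GT_minimal} and Baik-Shin \cite{BaikShin_Torelli}. The idea is to arrange the filling data so that a fixed basic Penner configuration occupies a base subsurface of bounded genus (producing the shift $577$ after we account for the room needed to realize a valid filling pair of non-separating multicurves whose $N$-th powers still generate a non-Torelli, non-normal-generator element), while the remaining genus $g-577$ is taken up by a sequence of handles along which the twisting pattern is repeated. Exhibiting a simple closed curve that is coarsely periodic along a quasi-axis of $f_g$ in $\C(S_g)$, and tracking how many $\C$-edges one iteration of $f_g$ displaces it, should yield the claimed $1/(g-577)$ decay, with the multiplicative factor $1152$ absorbing both the fixed combinatorial cost of the Penner construction and the extra cost of inflating each Dehn twist to its $N$-th power in order to land in $\Mod(S_g)[N]$.

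The principal obstacle is precisely this last step. Unlike the Teichm\"uller metric, $\C(S_g)$ is only Gromov hyperbolic and is highly non-locally-finite, so bounding $\ell_{\C}(f_g)$ from above requires either producing an explicit coarse axis and tracking iterates along it, or invoking Masur-Minsky subsurface-projection estimates to control $d_{\C}(v, f_g^n v)$ for a carefully chosen test vertex $v$. Matching the explicit constants $1152$ and $577$ will require tight numerical bookkeeping of (i) the minimal genus needed for the base Penner pattern so that the resulting mapping class is simultaneously pseudo-Anosov, non-Torelli, and contained in a proper level subgroup, and (ii) the quantitative dependence of the axis-displacement estimate on the number of handles used and on the exponent $N$ of the Dehn twist powers.
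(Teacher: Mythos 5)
The part of your argument that works is the ``not a normal generator'' step: placing $f_g$ in a congruence-type subgroup is essentially what the paper does as well (there, the normal closure of the constructed element is shown to lie in the kernel of $\Mod(S_{g+1}) \to \Aut(H_1(S_{g+1};\Z/g\Z))$, because the relevant multitwists act trivially mod $g$), and the non-Torelli verification by a homology computation is also in the same spirit. The genuine gap is the translation-length estimate, which is the actual content of the theorem and which your proposal defers to ``tight numerical bookkeeping.'' Saying that one should exhibit a curve that is ``coarsely periodic along a quasi-axis'' and track how many edges one iteration displaces it does not produce a bound of the form $\ell_{\C}(f_g) \le \frac{1152}{g-577}$: a single-iterate displacement bound only gives $\ell_{\C}(f_g) \le d_{\C}(v, f_g v)$, which does not decay in $g$. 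What is needed, and what is missing from your sketch, is a mechanism showing that for all $m$ up to roughly $g/576$ the iterates $f_g^m(v)$ of a well-chosen test curve $v$ remain disjoint from a common essential curve, so that $d_{\C}(v, f_g^m v) \le 2$ and hence $\ell_{\C}(f_g) \le 2/m$.

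The paper gets exactly this from a structure your construction does not have: $f_g$ is not built directly by a Penner product on $S_g$, but as a lift $\tilde f$ of a fixed pseudo-Anosov $f = T_{\beta} T_{\varphi\beta}^{-1} T_{\varphi\alpha}^{-1}$ on $S_2$ through a degree-$g$ cyclic cover $S_{g+1} \to S_2$ dual to a non-separating curve $\alpha$. The cover decomposes into $g$ pieces $X_1,\dots,X_g$, and since each twist in $\tilde f$ is a multitwist along the preimage of a curve with controlled geometric intersection with $\alpha$, one shows $\tilde f^m(\tilde\alpha)$ is contained in a union of about $m\,(i(\varphi\alpha,\alpha)+i(\varphi\beta,\alpha)) = 576\,m$ consecutive pieces; whenever $576m+1 \le g$ some piece (or a preimage component of $\alpha$) is disjoint from both $\tilde\alpha$ and $\tilde f^m\tilde\alpha$. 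The constants $1152 = 2\cdot 576$ and $577 = 576+1$ come precisely from the computed intersection numbers $i(\varphi\alpha,\alpha)=144$ and $i(\varphi\beta,\alpha)=432$ in the genus-two base. Your plan of repeating a Penner pattern along handles on $S_g$ directly (in the style of the bounds in \eqref{eqn.asymp}) could in principle be made to work, but then the filling multicurves must be chosen with a chain-like structure and you must prove the analogous slow-spreading statement for iterates, none of which is supplied; as written, the crux of the proof is asserted rather than proved, and nothing in the proposal would produce the specific constants in the statement.
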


The quantities 577 and 578 arise from intersection numbers between curves whose associated Dehn twists are used in the construction of $f_g$.
The rest of this chapter is devoted to explain how we constructed a sequence in the above theorem. An idea to construct pseudo-Anosov mapping classes that do not normally generate mapping class groups is considering finite covers and taking the lifts of a fixed pseudo-Anosov mapping class. Such lifts possess a periodic behavior, from which we deduce that they are not normal generators. Moreover, as the genus gets bigger, more simple closed curves have common disjoint simple closed curves, which would make the distance in the curve graphs smaller. It would be a good exercise to modify the construction to obtain the upper bound of $L_{\C}(\I_g)$ in \eqref{eqn.asymp}. See the work of Baik and Shin \cite{BaikShin_Torelli} for the proof of the precise asymptote of $L_{\C}(\I_g)$.

\subsection*{Construction of coverings}

Let $\alpha$ be a non-separating simple closed curve on the closed surface $S_2$ of genus $2$. Cutting $S_2$ along $\alpha$ and gluing $g$ copies of the resulting surface along copies of $\alpha$ in a cyclic way, we obtain the closed surface $S_{g+1}$ of genus $g+1$. This gives the finite cyclic cover $p_{g+1}$ of degree $g$, as described in Figure \ref{fig:covering}.

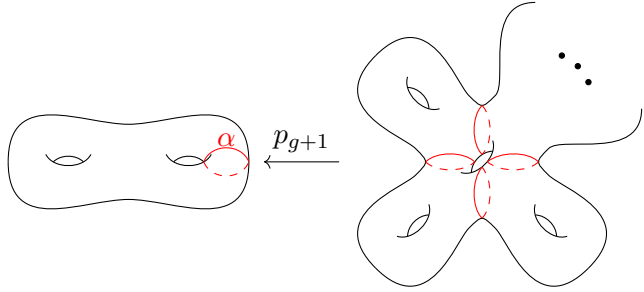
\begin{figure}[h]
	\centering
	\begin{tikzpicture}[scale=1, every node/.style={scale=1}]
	\draw (-1.6, 0) .. controls (-1.6, 1) and (-0.6, 0.5) .. (0, 0.5) .. controls (0.6, 0.5) and (1.6, 1) .. (1.6, 0);
	\begin{scope}[rotate=180]
		\draw (-1.6, 0) .. controls (-1.6, 1) and (-0.6, 0.5) .. (0, 0.5) .. controls (0.6, 0.5) and (1.6, 1) .. (1.6, 0);
	\end{scope}
	
	\draw (-1.1, 0.1) .. controls (-1, -0.1) and (-0.6, -0.1) .. (-0.5, 0.1);
	\draw (-1, 0) .. controls (-0.9, 0.1) and (-0.7, 0.1) .. (-0.6, 0);
	
	\draw (1.1, 0.1) .. controls (1, -0.1) and (0.6, -0.1) .. (0.5, 0.1);
	\draw (1, 0) .. controls (0.9, 0.1) and (0.7, 0.1) .. (0.6, 0);
	
	\draw[red] (1, 0) .. controls (1.1, 0.25) and (1.5, 0.25) .. (1.6, 0);
	\draw[red, dashed] (1, 0) .. controls (1.1, -0.25) and (1.5, -0.25) .. (1.6, 0);
	
	\draw[red] (1.3, 0.3) node {$\alpha$};
	
	\draw[<-] (1.8, 0) -- (2.8, 0);
	\draw (2.3, 0) node[above] {$p_{g+1}$};
	
	\begin{scope}[shift={(4.7, 0)}, rotate=45]
		
		\draw (-2, 0) .. controls (-2, 1) and (-1, 0.5) .. (-0.75, 0.5) .. controls (-0.5, 0.5) .. (-0.5, 0.75) .. controls (-0.5, 1) and (-1, 2) .. (0, 2);
		\draw (-1.5, 0.1) .. controls (-1.4, -0.1) and (-1, -0.1) .. (-0.9, 0.1);
		\draw (-1.4, 0) .. controls (-1.3, 0.1) and (-1.1, 0.1) .. (-1, 0);
		
		\begin{scope}[shift={(1.2, 0)}]
			\draw (-1.5, 0.1) .. controls (-1.4, -0.1) and (-1, -0.1) .. (-0.9, 0.1);
			\draw (-1.4, 0) .. controls (-1.3, 0.1) and (-1.1, 0.1) .. (-1, 0);
		\end{scope}
		
		\begin{scope}[rotate=90]
			\draw (-2, 0) .. controls (-2, 1) and (-1, 0.5) .. (-0.75, 0.5) .. controls (-0.5, 0.5) .. (-0.5, 0.75) .. controls (-0.5, 1) and (-1, 2) .. (0, 2);
			\draw (-1.5, -0.1) .. controls (-1.4, 0.1) and (-1, 0.1) .. (-0.9, -0.1);
			\draw (-1.4, 0) .. controls (-1.3, -0.1) and (-1.1, -0.1) .. (-1, 0);
		\end{scope}
		
		\begin{scope}[rotate=-90]
			\draw (-2, 0) .. controls (-2, 1) and (-1, 0.5) .. (-0.75, 0.5) .. controls (-0.5, 0.5) .. (-0.5, 0.75) .. controls (-0.5, 1) and (-1.5, 1.5) .. (-1, 2);
			\draw (-1.5, 0.1) .. controls (-1.4, -0.1) and (-1, -0.1) .. (-0.9, 0.1);
			\draw (-1.4, 0) .. controls (-1.3, 0.1) and (-1.1, 0.1) .. (-1, 0);
		\end{scope}
		\begin{scope}[rotate=180]
		\draw (-2, 1) .. controls (-1.5, 1.5) and (-1, 0.5) .. (-0.75, 0.5) .. controls (-0.5, 0.5) .. (-0.5, 0.75) .. controls (-0.5, 1) and (-1, 2) .. (0, 2);
		\end{scope}
		
		\draw[red, dashed] (-0.53, 0.53) .. controls (-0.53, 0.33) and (-0.27, 0.07) .. (-0.07, 0.07);
		\draw[red] (-0.53, 0.53) .. controls (-0.33, 0.53) and (-0.07, 0.27) .. (-0.07, 0.07);
		
		\begin{scope}[rotate=-90]
			\draw[red] (-0.53, 0.53) .. controls (-0.53, 0.33) and (-0.27, 0.07) .. (-0.07, 0.07);
			\draw[red, dashed] (-0.53, 0.53) .. controls (-0.33, 0.53) and (-0.07, 0.27) .. (-0.07, 0.07);
		\end{scope}
		
		\begin{scope}[rotate=90]
		\draw[red, dashed] (-0.53, 0.53) .. controls (-0.53, 0.33) and (-0.27, 0.07) .. (-0.05, 0.05);
		\draw[red] (-0.53, 0.53) .. controls (-0.33, 0.53) and (-0.07, 0.27) .. (-0.05, 0.05);
		\end{scope}
		
		\begin{scope}[rotate=180]
		\draw[red] (-0.53, 0.53) .. controls (-0.53, 0.33) and (-0.27, 0.07) .. (-0.05, 0.05);
		\draw[red, dashed] (-0.53, 0.53) .. controls (-0.33, 0.53) and (-0.07, 0.27) .. (-0.05, 0.05);
		\end{scope}
		
		\filldraw (1.75, 0.25) circle(1pt);
		\filldraw (1.8, 0) circle(1pt);
		\filldraw (1.75, -0.25) circle(1pt);
	\end{scope}

	\end{tikzpicture}
	
	\caption{Finite cyclic covering of degree $g$} \label{fig:covering}
\end{figure}

The covering $p_{g+1}$ can also be defined algebraically. Recall the algebraic intersection number $\hat{i}(\cdot, \cdot)$.
Considering the composition $$\pi_{1}(S_2)  \xrightarrow{\hat{i}(\cdot, \alpha)} \Z \xrightarrow{\mod g} \Z / g\Z$$ which is a homomorphism, the covering $p_{g+1}$ corresponds to the kernel of this homomorphism.

\subsection*{Construction of pseudo-Anosovs}

Keep the choice of the simple closed curve $\alpha$. Fixing $g > 1$, we simply denote the covering by $p := p_{g+1}$. We choose a separating simple closed curve $\beta \subset S_2$ as in Figure \ref{fig:alphabeta}.

	\begin{figure}[h]
		\centering
		\begin{tikzpicture}[scale=1, every node/.style={scale=1}]
		\draw (-1.6, 0) .. controls (-1.6, 1) and (-0.6, 0.5) .. (0, 0.5) .. controls (0.6, 0.5) and (1.6, 1) .. (1.6, 0);
		\begin{scope}[rotate=180]
			\draw (-1.6, 0) .. controls (-1.6, 1) and (-0.6, 0.5) .. (0, 0.5) .. controls (0.6, 0.5) and (1.6, 1) .. (1.6, 0);
		\end{scope}
		
		\draw (-1.1, 0.1) .. controls (-1, -0.1) and (-0.6, -0.1) .. (-0.5, 0.1);
		\draw (-1, 0) .. controls (-0.9, 0.1) and (-0.7, 0.1) .. (-0.6, 0);
		
		\draw (1.1, 0.1) .. controls (1, -0.1) and (0.6, -0.1) .. (0.5, 0.1);
		\draw (1, 0) .. controls (0.9, 0.1) and (0.7, 0.1) .. (0.6, 0);
		
		\draw[red] (1, 0) .. controls (1.1, 0.25) and (1.5, 0.25) .. (1.6, 0);
		\draw[red, dashed] (1, 0) .. controls (1.1, -0.25) and (1.5, -0.25) .. (1.6, 0);
		
		\draw[red] (1.3, 0.3) node {$\alpha$};
		
		\draw[blue] (0, 0.5) .. controls (0.2, 0.3) and (0.2, -0.3) .. (0, -0.5);
		\draw[blue, dashed] (0, 0.5) .. controls (-0.2, 0.3) and (-0.2, -0.3) .. (0, -0.5);
		
		\draw[blue] (0, -0.5) node[below] {$\beta$};

		\draw[<-] (1.8, 0) -- (2.8, 0);
		\draw (2.3, 0) node[above] {$p$};
		
		\begin{scope}[shift={(4.7, 0)}, rotate=45]
		
			\draw (-2, 0) .. controls (-2, 1) and (-1, 0.5) .. (-0.75, 0.5) .. controls (-0.5, 0.5) .. (-0.5, 0.75) .. controls (-0.5, 1) and (-1, 2) .. (0, 2);
			\draw (-1.5, 0.1) .. controls (-1.4, -0.1) and (-1, -0.1) .. (-0.9, 0.1);
			\draw (-1.4, 0) .. controls (-1.3, 0.1) and (-1.1, 0.1) .. (-1, 0);
		
			\begin{scope}[shift={(1.2, 0)}]
				\draw (-1.5, 0.1) .. controls (-1.4, -0.1) and (-1, -0.1) .. (-0.9, 0.1);
				\draw (-1.4, 0) .. controls (-1.3, 0.1) and (-1.1, 0.1) .. (-1, 0);
			\end{scope}
		
			\begin{scope}[rotate=90]
				\draw (-2, 0) .. controls (-2, 1) and (-1, 0.5) .. (-0.75, 0.5) .. controls (-0.5, 0.5) .. (-0.5, 0.75) .. controls (-0.5, 1) and (-1, 2) .. (0, 2);
				\draw (-1.5, -0.1) .. controls (-1.4, 0.1) and (-1, 0.1) .. (-0.9, -0.1);
				\draw (-1.4, 0) .. controls (-1.3, -0.1) and (-1.1, -0.1) .. (-1, 0);
			\end{scope}
		
			\begin{scope}[rotate=-90]
				\draw (-2, 0) .. controls (-2, 1) and (-1, 0.5) .. (-0.75, 0.5) .. controls (-0.5, 0.5) .. (-0.5, 0.75) .. controls (-0.5, 1) and (-1.5, 1.5) .. (-1, 2);
				\draw (-1.5, 0.1) .. controls (-1.4, -0.1) and (-1, -0.1) .. (-0.9, 0.1);
				\draw (-1.4, 0) .. controls (-1.3, 0.1) and (-1.1, 0.1) .. (-1, 0);
			\end{scope}
			\begin{scope}[rotate=180]
				\draw (-2, 1) .. controls (-1.5, 1.5) and (-1, 0.5) .. (-0.75, 0.5) .. controls (-0.5, 0.5) .. (-0.5, 0.75) .. controls (-0.5, 1) and (-1, 2) .. (0, 2);
			\end{scope}
			
			\draw[red, dashed] (-0.53, 0.53) .. controls (-0.53, 0.33) and (-0.27, 0.07) .. (-0.07, 0.07);
			\draw[red] (-0.53, 0.53) .. controls (-0.33, 0.53) and (-0.07, 0.27) .. (-0.07, 0.07);
		
			\begin{scope}[rotate=-90]
				\draw[red] (-0.53, 0.53) .. controls (-0.53, 0.33) and (-0.27, 0.07) .. (-0.07, 0.07);
				\draw[red, dashed] (-0.53, 0.53) .. controls (-0.33, 0.53) and (-0.07, 0.27) .. (-0.07, 0.07);
			\end{scope}
		
			\begin{scope}[rotate=90]
				\draw[red, dashed] (-0.53, 0.53) .. controls (-0.53, 0.33) and (-0.27, 0.07) .. (-0.05, 0.05);
				\draw[red] (-0.53, 0.53) .. controls (-0.33, 0.53) and (-0.07, 0.27) .. (-0.05, 0.05);
			\end{scope}
		
			\begin{scope}[rotate=180]
				\draw[red] (-0.53, 0.53) .. controls (-0.53, 0.33) and (-0.27, 0.07) .. (-0.05, 0.05);
				\draw[red, dashed] (-0.53, 0.53) .. controls (-0.33, 0.53) and (-0.07, 0.27) .. (-0.05, 0.05);
			\end{scope}
			
			\draw[blue] (-0.68, 0.5) .. controls (-0.48, 0.3) and (-0.48, -0.3) .. (-0.68, -0.5);
			\draw[blue, dashed] (-0.68, 0.5) .. controls (-0.88, 0.3) and (-0.88, -0.3) .. (-0.68, -0.5);
			
			\begin{scope}[rotate=90]
				\draw[blue] (-0.68, 0.5) .. controls (-0.48, 0.3) and (-0.48, -0.3) .. (-0.68, -0.5);
				\draw[blue, dashed] (-0.68, 0.5) .. controls (-0.88, 0.3) and (-0.88, -0.3) .. (-0.68, -0.5);
			\end{scope}
			
			\begin{scope}[rotate=-90]
			\draw[blue, dashed] (-0.68, 0.5) .. controls (-0.48, 0.3) and (-0.48, -0.3) .. (-0.68, -0.5);
			\draw[blue] (-0.68, 0.5) .. controls (-0.88, 0.3) and (-0.88, -0.3) .. (-0.68, -0.5);
			\end{scope}
		
			\filldraw (1.75, 0.25) circle(1pt);
			\filldraw (1.8, 0) circle(1pt);
			\filldraw (1.75, -0.25) circle(1pt);
		\end{scope}
		
		\end{tikzpicture}
		
		\caption{A separating curve $\beta$ on $S_2$ with $\alpha \cap \beta = \emptyset$} \label{fig:alphabeta}
	\end{figure}
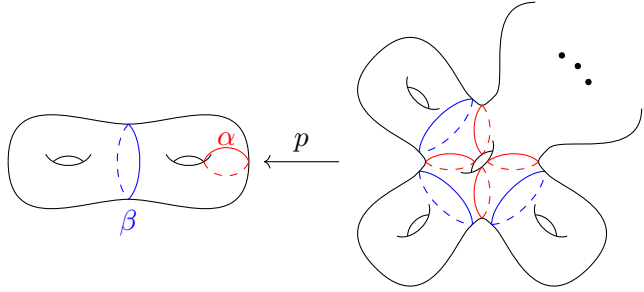

 To construct a pseudo-Anosov, also fix a simple closed curve $\xi$ as in Figure \ref{fig:quant}. Then two simple closed curves $\beta$ and $\xi$ fill the surface $S_2$, and hence $\beta$ and $\la := T_{\xi}\beta$ do so. Hence, by Theorem \ref{thm.pennerconstruction}, the mapping class $\varphi := T_{\la} T_{\beta}^{-1}$ is pseudo-Anosov. Moreover, since $\beta$ is separating, $\la$ is separating as well, and therefore $\varphi$ is Torelli.

 \begin{figure}[h]
	\centering
	\begin{tikzpicture}[scale=1.5, every node/.style={scale=1}]
	\draw (-1.6, 0) .. controls (-1.6, 1) and (-0.6, 0.5) .. (0, 0.5) .. controls (0.6, 0.5) and (1.6, 1) .. (1.6, 0);
	\begin{scope}[rotate=180]
	\draw (-1.6, 0) .. controls (-1.6, 1) and (-0.6, 0.5) .. (0, 0.5) .. controls (0.6, 0.5) and (1.6, 1) .. (1.6, 0);
	\end{scope}
	
	\draw (-1.1, 0.1) .. controls (-1, -0.1) and (-0.6, -0.1) .. (-0.5, 0.1);
	\draw (-1, 0) .. controls (-0.9, 0.1) and (-0.7, 0.1) .. (-0.6, 0);
	
	\draw (1.1, 0.1) .. controls (1, -0.1) and (0.6, -0.1) .. (0.5, 0.1);
	\draw (1, 0) .. controls (0.9, 0.1) and (0.7, 0.1) .. (0.6, 0);
	
	\draw[red] (1, 0) .. controls (1.1, 0.25) and (1.5, 0.25) .. (1.6, 0);
	\draw[red, dashed] (1, 0) .. controls (1.1, -0.25) and (1.5, -0.25) .. (1.6, 0);
	
	\draw[red] (1.6, 0) node[right] {$\alpha$};
	
	\draw[blue] (0, 0.5) .. controls (0.2, 0.3) and (0.2, -0.3) .. (0, -0.5);
	\draw[blue, dashed] (0, 0.5) .. controls (-0.2, 0.3) and (-0.2, -0.3) .. (0, -0.5);
	
	\draw[blue] (0, -0.5) node[below] {$\beta$};
	
	\draw[olive] (-0.6, 0) .. controls (-0.6, 0.4) and (1.2, 0.4) .. (1.2, 0) .. controls (1.2, -0.5) and (-1.4, -0.5) .. (-1.4, 0) .. controls (-1.4, 0.5) and (-0.7, 0.55) .. (-0.6, 0.58);
	\draw[olive, dashed] (-0.6, 0.58) .. controls (-0.3, 0.3) and (0, 0) .. (0.6, 0);
	\draw[olive] (0.6, 0) .. controls (0.6, -0.3) and (-1.2, -0.3) .. (-1.2, 0) .. controls (-1.2, 0.6) and (1.4, 0.6) .. (1.4, 0) .. controls (1.4, -0.5) and (0.7, -0.55) .. (0.6, -0.58);
	\draw[olive, dashed] (0.6, -0.58) .. controls (0.3, -0.5) and (-0.6, -0.2) .. (-0.6, 0);
	
	\draw[olive] (-0.6, 0.58) node[above] {$\xi$};
	\end{tikzpicture}
	
	\caption{$\beta$ and $\xi$ fill the surface} \label{fig:quant}
\end{figure}
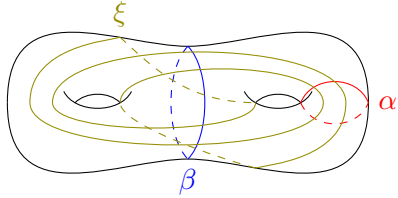

Now we set $f := T_{\beta} T_{\varphi \beta}^{-1} T_{\varphi \alpha}^{-1}$. Since $\alpha$ and $\beta$ are disjoint, $\varphi \alpha$ and $\varphi \beta$ are disjoint as well. Since $\beta$ and $\varphi \beta$ already fill $S_2$, $\beta$ and $\varphi \alpha \cup \varphi \beta$ also fill $S_2$. Hence, by Theorem \ref{thm.pennerconstruction}, $f$ is pseudo-Anosov. Moreover, since $\beta$ and $\varphi \beta$ are separating and $\varphi$ is Torelli, we have the following identities between homology classes in $H_1(S_2)$:
$$[f^{-1}(\alpha)] = [T_{\varphi \alpha} T_{\varphi \beta} T_{\beta}^{-1}\alpha] = [T_{\varphi \alpha} \alpha] = [\varphi T_{\alpha} \varphi^{-1} \alpha] = [\alpha] \in H_1(S_2).$$
This implies that for any $c \in \pi_1(S_2)$, we have $$\hat{i}(f(c), \alpha) = \hat{i}(c, f^{-1}(\alpha)) = \hat{i}(c, \alpha).$$
Therefore, $f$ preserves the kernel of the composition $$\pi_{1}(S_2) \xrightarrow{\hat{i}(\cdot, \alpha)} \Z \xrightarrow{\mod g} \Z / g\Z$$ and hence $f$ has a lift $\tilde f := T_{p^{-1}(\beta)}T_{p^{-1}(\varphi\beta)}^{-1}T_{p^{-1}(\varphi\alpha)}^{-1}$. Again, by Theorem \ref{thm.pennerconstruction}, $\tilde f$ is pseudo-Anosov. We will show that $\tilde f$ is the desired mapping class $f_{g+1}$.

\subsection*{Claim 1: $\tilde f$ is not Torelli} Let us show that $\tilde f$ is not a Torelli element of $\Mod(S_{g+1})$. Let $\eta$ and $\tilde \eta$ be simple closed curves as in Figure \ref{fig:eta}. We then have $[p(\tilde \eta)] = g [\eta] \in H_1(S_2)$. Since $f = T_{\beta} T_{\varphi \beta}^{-1} T_{\varphi \alpha}^{-1}$ and $ T_{\beta} T_{\varphi \beta}^{-1}$ is Torelli, this implies $$[f(\eta)] = [T_{\varphi \alpha}^{-1} \alpha] = [\varphi T_{\alpha}^{-1} \varphi^{-1} \eta].$$
Since $\varphi$ is Torelli as well, we have $$[f(\eta)] = [T_{\alpha}^{-1} \eta] \neq [\eta].$$
This yields $[\tilde f (\tilde \eta)] \neq [\tilde \eta]$, and therefore $\tilde f$ is not Torelli.

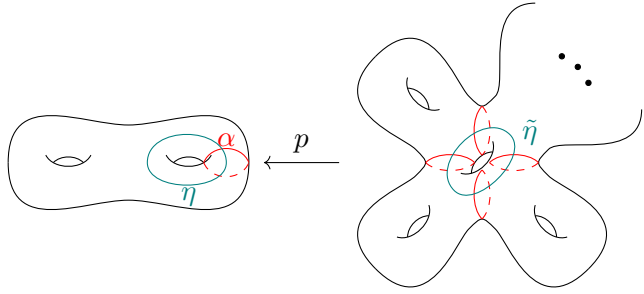
\begin{figure}[h]
		\centering
		\begin{tikzpicture}[scale=1, every node/.style={scale=1}]
			\draw (-1.6, 0) .. controls (-1.6, 1) and (-0.6, 0.5) .. (0, 0.5) .. controls (0.6, 0.5) and (1.6, 1) .. (1.6, 0);
			\begin{scope}[rotate=180]
				\draw (-1.6, 0) .. controls (-1.6, 1) and (-0.6, 0.5) .. (0, 0.5) .. controls (0.6, 0.5) and (1.6, 1) .. (1.6, 0);
			\end{scope}
		
			\draw (-1.1, 0.1) .. controls (-1, -0.1) and (-0.6, -0.1) .. (-0.5, 0.1);
			\draw (-1, 0) .. controls (-0.9, 0.1) and (-0.7, 0.1) .. (-0.6, 0);
		
			\draw (1.1, 0.1) .. controls (1, -0.1) and (0.6, -0.1) .. (0.5, 0.1);
			\draw (1, 0) .. controls (0.9, 0.1) and (0.7, 0.1) .. (0.6, 0);
		
			\draw[red] (1, 0) .. controls (1.1, 0.25) and (1.5, 0.25) .. (1.6, 0);
			\draw[red, dashed] (1, 0) .. controls (1.1, -0.25) and (1.5, -0.25) .. (1.6, 0);

			\draw[red] (1.3, 0.3) node {$\alpha$};

			\draw[teal] (1.3, 0) .. controls (1.3, 0.5) and (0.25, 0.5) .. (0.25, 0) .. controls (0.25, -0.4) and (1.3, -0.4) .. (1.3, 0);
			\draw[teal] (0.8, -0.45) node {$\eta$};

			\draw[<-] (1.8, 0) -- (2.8, 0);
			\draw (2.3, 0) node[above] {$p$};
		
			\begin{scope}[shift={(4.7, 0)}, rotate=45]
		
				\draw (-2, 0) .. controls (-2, 1) and (-1, 0.5) .. (-0.75, 0.5) .. controls (-0.5, 0.5) .. (-0.5, 0.75) .. controls (-0.5, 1) and (-1, 2) .. (0, 2);
				\draw (-1.5, 0.1) .. controls (-1.4, -0.1) and (-1, -0.1) .. (-0.9, 0.1);
				\draw (-1.4, 0) .. controls (-1.3, 0.1) and (-1.1, 0.1) .. (-1, 0);
		
				\begin{scope}[shift={(1.2, 0)}]
					\draw (-1.5, 0.1) .. controls (-1.4, -0.1) and (-1, -0.1) .. (-0.9, 0.1);
					\draw (-1.4, 0) .. controls (-1.3, 0.1) and (-1.1, 0.1) .. (-1, 0);
				\end{scope}
		
				\begin{scope}[rotate=90]
					\draw (-2, 0) .. controls (-2, 1) and (-1, 0.5) .. (-0.75, 0.5) .. controls (-0.5, 0.5) .. (-0.5, 0.75) .. controls (-0.5, 1) and (-1, 2) .. (0, 2);
					\draw (-1.5, -0.1) .. controls (-1.4, 0.1) and (-1, 0.1) .. (-0.9, -0.1);
					\draw (-1.4, 0) .. controls (-1.3, -0.1) and (-1.1, -0.1) .. (-1, 0);

		\draw[red, dashed] (-0.53, 0.53) .. controls (-0.53, 0.33) and (-0.27, 0.07) .. (-0.07, 0.07);
		\draw[red] (-0.53, 0.53) .. controls (-0.33, 0.53) and (-0.07, 0.27) .. (-0.07, 0.07);
				\end{scope}
		
				\begin{scope}[rotate=-90]
					\draw (-2, 0) .. controls (-2, 1) and (-1, 0.5) .. (-0.75, 0.5) .. controls (-0.5, 0.5) .. (-0.5, 0.75) .. controls (-0.5, 1) and (-1.5, 1.5) .. (-1, 2);
					\draw (-1.5, 0.1) .. controls (-1.4, -0.1) and (-1, -0.1) .. (-0.9, 0.1);
					\draw (-1.4, 0) .. controls (-1.3, 0.1) and (-1.1, 0.1) .. (-1, 0);

		\draw[red] (-0.53, 0.53) .. controls (-0.53, 0.33) and (-0.27, 0.07) .. (-0.07, 0.07);
		\draw[red, dashed] (-0.53, 0.53) .. controls (-0.33, 0.53) and (-0.07, 0.27) .. (-0.07, 0.07);
				\end{scope}
				\begin{scope}[rotate=180]
					\draw (-2, 1) .. controls (-1.5, 1.5) and (-1, 0.5) .. (-0.75, 0.5) .. controls (-0.5, 0.5) .. (-0.5, 0.75) .. controls (-0.5, 1) and (-1, 2) .. (0, 2);

		\draw[red] (-0.53, 0.53) .. controls (-0.53, 0.33) and (-0.27, 0.07) .. (-0.07, 0.07);
		\draw[red, dashed] (-0.53, 0.53) .. controls (-0.33, 0.53) and (-0.07, 0.27) .. (-0.07, 0.07);
				\end{scope}
		
				\draw[red, dashed] (-0.53, 0.53) .. controls (-0.53, 0.33) and (-0.27, 0.07) .. (-0.07, 0.07);
				\draw[red] (-0.53, 0.53) .. controls (-0.33, 0.53) and (-0.07, 0.27) .. (-0.07, 0.07);

				\begin{scope}[rotate=-90]

				\end{scope}
				
				\draw[teal] (-1.7+1.175, 0) .. controls (-1.7+1.175, 0.5) and (-0.65+1.175, 0.5) .. (-0.65+1.175, 0) .. controls (-0.65+1.175, -0.4) and (-1.7+1.175, -0.4) .. (-1.7+1.175, 0);
				\draw[teal] (0.565, 0) node[right] {$\tilde{\eta}$};
		
				\filldraw (1.75, 0.25) circle(1pt);
				\filldraw (1.8, 0) circle(1pt);
				\filldraw (1.75, -0.25) circle(1pt);
			\end{scope}
		
		\end{tikzpicture}
		
		\caption{Choice of $\eta$ and $\tilde \eta$} \label{fig:eta}
	\end{figure}

 \subsection*{Claim 2: $\tilde f$ is not a normal generator}
 Since $\varphi$ is Torelli, it also admits a lift $\tilde \varphi$. Recalling that $\tilde f = T_{p^{-1}(\beta)} T_{p^{-1}(\varphi \beta)}^{-1} T_{p^{-1}(\varphi \alpha)}^{-1}$, we have $$\tilde f = T_{p^{-1}(\beta)} \left( \tilde \varphi T_{p^{-1}(\beta)}^{-1} \tilde \varphi^{-1} \right) \left( \tilde \varphi T_{p^{-1}(\alpha)}^{-1} \tilde \varphi^{-1} \right),$$
 which implies $$\llangle \tilde f \rrangle \le \llangle T_{p^{-1}(\beta)},  T_{p^{-1}(\alpha)} \rrangle.$$

 We prove the claim by showing that $\llangle T_{p^{-1}(\beta)},  T_{p^{-1}(\alpha)} \rrangle$ is a proper normal subgroup of $\Mod(S_{g+1})$. Indeed, we will show that $T_{p^{-1}(\beta)}$ and $T_{p^{-1}(\alpha)}$ trivially act on $H_1(S_{g+1}; \Z/g\Z)$. This implies that they belong to the kernel of a canonical homomorphism $\Mod(S_{g+1}) \to \Aut(H_1(S_{g+1}; \Z/g \Z))$.

 As one can see from Figure \ref{fig:alphabeta}, each component of $p^{-1}(\beta)$ is separating. Hence, $T_{p^{-1}(\beta)}$ is Torelli. In particular, $T_{p^{-1}(\beta)}$ acts trivially on $H_1(S_{g+1}; \Z/g\Z)$. In addition, any two components of $p^{-1}(\alpha)$ bound a subsurface,  hence they are homologous. Fixing a component $\tilde \alpha$ of $p^{-1}(\alpha)$, this implies that the action of $T_{p^{-1}(\alpha)}$ on $H_1(S_{g+1})$ is identical to the action of $T_{\tilde \alpha}^{g}$, which acts trivially on $H_1(S_{g+1}; \Z/g\Z)$. This finishes the proof of the claim.

 \subsection*{Asymptotic translation length of $\tilde f$}
 We label the components of $S_{g+1} - p^{-1}(\alpha)$ by $X_1, \cdots, X_g \subset S_{g+1}$ so that $X_i$ and $X_{i+1}$ are glued along one of their boundary components for all $1 \le i \le g$, writing the index $i$ modulo $g$. We keep this convention throughout the section; in particular, $X_0 = X_g$. We make the choice of $\tilde \alpha$ more explicit by setting $\tilde \alpha := \partial X_0 \cap \partial X_1$.

 Since $\varphi$ is Torelli, $\hat{i}(\varphi \alpha, \alpha) = \hat{i}(\varphi \beta, \alpha) = 0$. In particular, both $i(\varphi \alpha, \alpha)$ and $i(\varphi \beta, \alpha)$ are even numbers. Referring to Figure \ref{fig:spreading}, we have \begin{equation}  \label{eqn:trapped}
		T_{p^{-1}(\varphi \alpha)}^{-1}\tilde{\alpha} \subset \bigcup_{j = - i(\varphi \alpha, \alpha)/2}^{i(\varphi \alpha, \alpha)/2} X_j.
	\end{equation}
 Similarly, since $\varphi \alpha$ and $\varphi \beta$ are disjoint, we also have $$T_{p^{-1}(\varphi \beta)}^{-1}T_{p^{-1}(\varphi \alpha)}^{-1}\tilde{\alpha} \subset \bigcup_{j = - {i(\varphi \beta, \alpha) + i(\varphi \alpha, \alpha) \over 2}}^{i(\varphi \beta, \alpha) + i(\varphi \alpha, \alpha) \over 2} X_j.$$ 

 \begin{figure}[h]
		\centering
		\begin{tikzpicture}[scale=1, every node/.style={scale=1}]
		\draw (-1.6, 0) .. controls (-1.6, 1) and (-0.6, 0.5) .. (0, 0.5) .. controls (0.6, 0.5) and (1.6, 1) .. (1.6, 0);
		\begin{scope}[rotate=180]
		\draw (-1.6, 0) .. controls (-1.6, 1) and (-0.6, 0.5) .. (0, 0.5) .. controls (0.6, 0.5) and (1.6, 1) .. (1.6, 0);
		\end{scope}
		
		\draw (-1.1, 0.1) .. controls (-1, -0.1) and (-0.6, -0.1) .. (-0.5, 0.1);
		\draw (-1, 0) .. controls (-0.9, 0.1) and (-0.7, 0.1) .. (-0.6, 0);
		
		\draw (1.1, 0.1) .. controls (1, -0.1) and (0.6, -0.1) .. (0.5, 0.1);
		\draw (1, 0) .. controls (0.9, 0.1) and (0.7, 0.1) .. (0.6, 0);
		
		\draw[red, thick] (1, 0) .. controls (1.1, 0.25) and (1.5, 0.25) .. (1.6, 0);
		\draw[red, dashed, thick] (1, 0) .. controls (1.1, -0.25) and (1.5, -0.25) .. (1.6, 0);
		
		\draw[red] (1.3, 0.3) node {\small $\alpha$};
		
		\draw[olive, thick] (-0.7, -0.05) .. controls (-0.5, -0.4) and (1.2, -0.25).. (1.2, 0.15) .. controls (1.2, 0.3) and (0.6, 0.3) .. (0.6, 0);
		\draw[olive, thick] (-0.6, 0) .. controls (-0.6, 0.4) and (1.5, 0.8) .. (1.5, 0.15) .. controls (1.5, -0.6) and (-0.5, -0.2).. (-0.7, -0.6);
		
		\draw[olive, dashed, thick] (-0.7, -0.6) .. controls (-0.85, -0.4) and (-0.85, -0.25) .. (-0.7, -0.05);
		\draw[olive, dashed, thick] (-0.6, 0) .. controls (-0.4, -0.2) and (0.4, -0.2) .. (0.6, 0);
		
		\draw[<-] (1.8, 0) -- (2.8, 0);
		\draw (2.3, 0) node[above] {$p$};
		
		\begin{scope}[shift={(4.7, 0)}, rotate=45]
		
		\draw (-2, 0) .. controls (-2, 1) and (-1, 0.5) .. (-0.75, 0.5) .. controls (-0.5, 0.5) .. (-0.5, 0.75) .. controls (-0.5, 1) and (-1, 2) .. (0, 2);
		\draw (-1.5, 0.1) .. controls (-1.4, -0.1) and (-1, -0.1) .. (-0.9, 0.1);
		\draw (-1.4, 0) .. controls (-1.3, 0.1) and (-1.1, 0.1) .. (-1, 0);
		
		\begin{scope}[shift={(1.2, 0)}]
		\draw (-1.5, 0.1) .. controls (-1.4, -0.1) and (-1, -0.1) .. (-0.9, 0.1);
		\draw (-1.4, 0) .. controls (-1.3, 0.1) and (-1.1, 0.1) .. (-1, 0);
		\end{scope}
		
		\begin{scope}[rotate=90]
		\draw (-2, 0) .. controls (-2, 1) and (-1, 0.5) .. (-0.75, 0.5) .. controls (-0.5, 0.5) .. (-0.5, 0.75) .. controls (-0.5, 1) and (-1, 2) .. (0, 2);
		\draw (-1.5, -0.1) .. controls (-1.4, 0.1) and (-1, 0.1) .. (-0.9, -0.1);
		\draw (-1.4, 0) .. controls (-1.3, -0.1) and (-1.1, -0.1) .. (-1, 0);
		\end{scope}
		
		\begin{scope}[rotate=-90]
		\draw (-2, 0) .. controls (-2, 1) and (-1, 0.5) .. (-0.75, 0.5) .. controls (-0.5, 0.5) .. (-0.5, 0.75) .. controls (-0.5, 1) and (-1.5, 1.5) .. (-1, 2);
		\draw (-1.5, 0.1) .. controls (-1.4, -0.1) and (-1, -0.1) .. (-0.9, 0.1);
		\draw (-1.4, 0) .. controls (-1.3, 0.1) and (-1.1, 0.1) .. (-1, 0);
		\end{scope}
		\begin{scope}[rotate=180]
		\draw (-2, 1) .. controls (-1.5, 1.5) and (-1, 0.5) .. (-0.75, 0.5) .. controls (-0.5, 0.5) .. (-0.5, 0.75) .. controls (-0.5, 1) and (-1, 2) .. (0, 2);
		\end{scope}
		
		\draw[red, dashed] (-0.53, 0.53) .. controls (-0.53, 0.33) and (-0.27, 0.07) .. (-0.07, 0.07);
		\draw[red] (-0.53, 0.53) .. controls (-0.33, 0.53) and (-0.07, 0.27) .. (-0.07, 0.07);
		
		\begin{scope}[rotate=-90]
		\draw[red] (-0.53, 0.53) .. controls (-0.53, 0.33) and (-0.27, 0.07) .. (-0.07, 0.07);
		\draw[red, dashed] (-0.53, 0.53) .. controls (-0.33, 0.53) and (-0.07, 0.27) .. (-0.07, 0.07);
		\end{scope}
		
		\begin{scope}[rotate=90]
		\draw[red, dashed, thick] (-0.53, 0.53) .. controls (-0.53, 0.33) and (-0.27, 0.07) .. (-0.05, 0.05);
		\draw[red, thick] (-0.53, 0.53) .. controls (-0.33, 0.53) and (-0.07, 0.27) .. (-0.05, 0.05);
		\end{scope}
		
		\begin{scope}[rotate=180]
		\draw[red] (-0.53, 0.53) .. controls (-0.53, 0.33) and (-0.27, 0.07) .. (-0.05, 0.05);
		\draw[red, dashed] (-0.53, 0.53) .. controls (-0.33, 0.53) and (-0.07, 0.27) .. (-0.05, 0.05);
		\end{scope}
		
		\filldraw (1.75, 0.25) circle(1pt);
		\filldraw (1.8, 0) circle(1pt);
		\filldraw (1.75, -0.25) circle(1pt);
		
	
		\draw[olive, dashed, thick] (-0.7-0.5, -0.6) .. controls (-0.85-0.5, -0.4) and (-0.85-0.5, -0.25) .. (-0.7-0.5, -0.05);
		\draw[olive, thick] (-1.2, -0.05) .. controls (-1, -0.3) and (-0.2, -0.3) .. (0, -0.05);
		\draw[olive, thick] (-1.2, -0.6) .. controls (-1, -0.3) and (0, -0.3) .. (0, -1);
		\draw[olive, dashed, thick] (0, -0.05) .. controls (0.1, -0.25) and (0.1, -0.8) .. (0, -1);
		
		\begin{scope}[rotate=180]
		\draw[olive] (-1.2, -0.05) .. controls (-1, -0.3) and (-0.2, -0.3) .. (0, -0.07);
		\draw[olive] (-1.2, -0.6) .. controls (-1, -0.3) and (0, -0.3) .. (0, -1);
		\draw[olive, dashed] (0, -0.07) .. controls (0.1, -0.25) and (0.1, -0.8) .. (0, -1);
  
		\end{scope}
		
		\begin{scope}[rotate=-90]
		\draw[olive, dashed] (-0.7-0.5, -0.6) .. controls (-0.85-0.5, -0.4) and (-0.85-0.5, -0.25) .. (-0.7-0.5, -0.05);
		\draw[olive] (-1.2, -0.05) .. controls (-1, -0.3) and (-0.2, -0.3) .. (0, -0.2);
		\draw[olive] (-1.2, -0.6) .. controls (-1, -0.3) and (0, -0.3) .. (0, -1);
		\draw[olive, dashed] (0, -0.2) .. controls (0.1, -0.25) and (0.1, -0.8) .. (0, -1);

		\end{scope}
		
		\begin{scope}[rotate=90]
		\draw[olive, dashed] (-0.7-0.5, -0.6) .. controls (-0.85-0.5, -0.4) and (-0.85-0.5, -0.25) .. (-0.7-0.5, -0.05);
		\draw[olive] (-1.2, -0.05) .. controls (-1, -0.3) and (-0.2, -0.3) .. (0, -0.2);
		\draw[olive] (-1.2, -0.6) .. controls (-1, -0.3) and (0, -0.3) .. (-0.5, -1);
		\draw[olive, dashed] (0, -0.2) .. controls (-0.1, -0.25) and (-0.1, -0.8) .. (-0.3, -1);

  	\draw[red] (-0.7,0.71) node {$\tilde{\alpha}$};
		\end{scope}
		
		\end{scope}
		
		\end{tikzpicture}
		
		\caption{The geometric intersection number of $\alpha$ and a curve on $S_2$ determines by which subsurfaces the multitwist of $\tilde \alpha$ along the preimage of the curve is trapped as in \eqref{eqn:trapped}.} \label{fig:spreading}
	\end{figure}
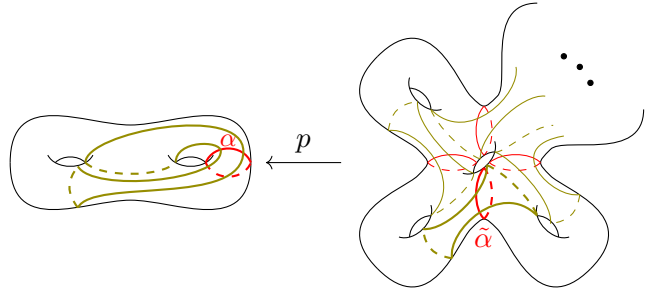

 As one can see from Figure \ref{fig:alphabeta}, $T_{p^{-1}(\beta)}$ fixes each $X_j$. This implies 
	$$
		\tilde{f}\tilde{\alpha} = T_{p^{-1}(\beta)} T_{p^{-1}(\varphi \beta)}^{-1}T_{p^{-1}(\varphi \alpha)}^{-1}\tilde{\alpha} \subset \bigcup_{j = - {i(\varphi \beta, \alpha) + i(\varphi \alpha, \alpha) \over 2}}^{i(\varphi \beta, \alpha) + i(\varphi \alpha, \alpha) \over 2} X_j.
	$$
 Inductively, we have, for any $m \in \N$, $$\tilde{f}^m\tilde{\alpha} \subseteq \bigcup_{j = - m \cdot {i(\varphi \beta, \alpha) + i(\varphi \alpha, \alpha) \over 2}}^{m \cdot {i(\varphi \beta, \alpha) + i(\varphi \alpha, \alpha) \over 2}} X_j.$$

This implies that if $m \in \N$ is such that 
\begin{equation} \label{eqn.estimatem}
m \left(i(\varphi \beta, \alpha) + i(\varphi \alpha, \alpha) \right) + 1 \le g,
\end{equation}
then there exists an essential simple closed curve disjoint from both $\tilde \alpha$ and $\tilde f^m \tilde \alpha$. Indeed, if the inequality in \eqref{eqn.estimatem} is strict, then $X_{\tilde j}$ is disjoint from $\tilde \alpha$ and $\tilde f^m \tilde \alpha$ for some $\tilde j$. If the equality holds in \eqref{eqn.estimatem}, we can take one component of $p^{-1}(\alpha)$. Hence, we have $$d_{\C}(\tilde \alpha, \tilde f^m \tilde \alpha) \le 2.$$
It then follows from the definition of $\ell_{\C}(\cdot)$ that $$\ell_{\C}(\tilde f) = \frac{1}{m} \ell_{\C}(\tilde f^m) \le \frac{2}{m}.$$

Therefore, the estimate for $\ell_{\C}(\tilde f)$ follows once we obtain the largest possible $m$. Recall that $\la = T_{\xi}\beta$ and $\varphi = T_{\la} T_{\beta}^{-1}$. From Figure \ref{fig:quant}, we have $$
\begin{aligned}
i(\xi, \beta) & = 6;\\
i(\lambda, \beta) & = i(T_{\xi}\beta, \beta) = i(\xi, \beta)^2 = 36;
\end{aligned}$$
by \cite[Proposition 3.2]{FM_primer}. It also follows from $\varphi \alpha = T_{\lambda} \alpha$ and $\varphi \beta = T_{\lambda} \beta$ that $$\begin{aligned}
i(\varphi \alpha, \alpha) & = i(T_{\lambda} \alpha, \alpha) = i(\lambda, \alpha)^2  = 144; \\
i(\varphi \beta, \alpha) & = i(T_{\lambda} \beta, \alpha) = i(\lambda, \beta) i(\lambda, \alpha)  = 432.
\end{aligned}$$ Hence, \eqref{eqn.estimatem} becomes $$576 m + 1 \le g.$$
The largest such $m$ also satisfies $$g - 575 \le 576 m + 1 \le g.$$

Consequently, we have shown that if $g \ge 577$, then $$\ell_{\C}(\tilde f) \le \frac{1152}{g - 576}.$$
Since we set $f_{g+1} = \tilde f$, this finishes the proof of Theorem \ref{thm.curvegraphconj}.
\qed

\section{Further questions} \label{sec.question}

We end this chapter by recording some further questions that have not yet been answered. 
Let $S_g$ be a closed surface of genus $g \ge  2$.
We first recall the questions mentioned above:

\begin{question}[Question \ref{ques.ng}] \label{ques.ng2}
Does there exist a constant $C > 0$ such that if $f \in \Mod(S_g)$ is a pseudo-Anosov with $\ell_{\C}(f) \le \frac{C}{g}$, then $f$ is a normal generator?
\end{question}

\begin{question}[Question \ref{ques.curvegraph}] \label{ques.curvegraph2}
Does there exist a constant $C > 0$ such that if $f \in \Mod(S_g)$ is a non-Torelli pseudo-Anosov with $\ell_{\C}(f) \le \frac{C}{g}$, then $f$ is a normal generator?
\end{question}

\subsection*{Minimal translation lengths and normal generation}

As shown by Gadre and Tsai \cite{GT_minimal} \eqref{eqn.asymp}, the following asymptote holds for minimal asymptotic translation lengths on curve graphs:
\begin{equation} \label{eqn.minasympcg}
L_{\C}(g) \asymp \frac{1}{g^2} \quad \text{for all }g \ge 2.
\end{equation}
Hence, as a first step towards Question \ref{ques.ng2}, one can also ask the following weaker version focusing on pseudo-Anosov mapping classes whose asymptotic translation lengths on curve graphs are minimal in the mapping class group.

\begin{question} \label{ques.minimalng}
    Given $g \ge 2$, if $f \in \Mod(S_g)$ satisfies $\ell_{\C}(f) = L_{\C}(g)$, then is $f$ a normal generator?
    Or, is this true for all large enough $g$?
\end{question}

A similar question for asymptotic translation lengths on Teichm\"uller spaces has an affirmative answer by Theorem \ref{thm.LMtrans} (Lanier--Margalit \cite{LM_NG}) and Theorem \ref{thm.penner} (Penner \cite{Penner_bounds}). By Theorem \ref{thm.penner}, we have
\begin{equation} \label{eqn.minasympteich}
L_{\T}(g) \asymp \frac{1}{g} \quad \text{for all } g \ge 2.
\end{equation}
Since Theorem \ref{thm.LMtrans} asserts that a pseudo-Anosov $f \in \Mod(S_g)$ is a normal generator if $\ell_{\T}(f) \le \frac{1}{2} \log 2$ for $g \ge 3$, this answers the $\ell_{\T}$-version of Question \ref{ques.minimalng} affirmatively. 

We also remark that the difference between the two asymptotes \eqref{eqn.minasympcg} and \eqref{eqn.minasympteich} explains the reason for having the genus in the upper bounds for $\ell_\C$ in Question \ref{ques.ng2} and Question \ref{ques.curvegraph2}.

\subsection*{Lanier--Margalit's criterion for general mapping classes}

The original Lanier--Margalit criterion (Theorem \ref{thm.LMtrans}) is about pseudo-Anosov mapping classes. As stated in Theorem \ref{thm.bkw}, it was extended in our joint work with Wu to partly pseudo-Anosov mapping classes, which include certain reducible elements in $\Mod(S_g)$. While this extension still requires mapping classes to have pseudo-Anosov dynamics on some subsurfaces, it is also true that Dehn twists along non-separating essential simple closed curves are also normal generators (\cite{Lickorish_generators}, \cite{Mumford_normal}). Such mapping classes do not exhibit any pseudo-Anosov dynamics on any subsurface, and have zero translation lengths on $\T(S_g)$.
In this regard, we ask for the largest subclass of $\Mod(S_g)$ to which Lanier--Margalit's criterion applies. We first ask whether the criterion applies to all reducible mapping classes.

\begin{question}
    Does there exist $g_0 \in \N$ such that for each $g \ge g_0$, if a reducible $f \in \Mod(S_g)$ satisfies $0 < \ell_{\T}(f) \le \frac{1}{2} \log 2$, then $f$ is a normal generator of $\Mod(S_g)$?
\end{question}

More generally and ambiguously, we ask the following.

\begin{question}
    Is there an alternative characterization for the maximal subset of $\Mod(S_g)$ to which Lanier--Margalit's criterion applies?
\end{question}

\subsection*{Handlebody groups}
While the whole discussion so far has been about the normal generation of the full mapping class group $\Mod(S_g)$. We now change gears and consider a slightly more general question: whether a given subgroup can also be normally generated, or whether there exists a neat description of its normal generators. We mainly discuss a concrete subgroup of $\Mod(S_g)$, the handlebody group, which is an interesting object in itself and has been studied from various points of view.
One can also refer to (\cite{DGO_memoirs}, \cite{CMM_RAAG}) for  related discussions on free subgroups.

Let $V_g$ be a handlebody\index{handlebody} of genus $g \ge 2$, that is, the 3-manifold with boundary obtained by attaching  $g$  1-handles to the 3-ball. We identify $S_g$ with $\partial V_g$ and consider the following subgroup of $\Mod(S_g)$, called the handlebody group\index{handlebody!handlebody group}:
$$
\H_g := \{ f \in \Mod(S_g) : f \text{ extends to } V_g \}.
$$
In other words, the handlebody group $\H_g$ consists of isotopy classes of restrictions of homeomorphisms on $V_g$ to $\partial V_g = S_g$.

The handlebody group $\H_g$ is an infinite, infinite-index subgroup of $\Mod(S_g)$ and is not normal \cite[Corollary 5.4]{hensel2018primer}. Indeed, there are normal generators of $\Mod(S_g)$ in $\H_g$ as we explain now. As in \eqref{eqn.mintransteich}, we consider the following quantity:
\begin{equation} \label{eqn.mintrteichtorhandle}
L_{\T}(\H_g) := \inf \{ \ell_{\T}(f) : f \in \H_g \text{ is pseudo-Anosov}\}.\index{asymptotic translation length!minimal asymptotic translation length}
\end{equation}
Hironaka showed in \cite[Theorem 1.2]{hironaka2011fibered} that 
$$
L_{\T}(\H_g) \asymp \frac{1}{g} \quad \text{for all } g \ge 2.
$$
Therefore, we apply Lanier--Margalit's criterion (Theorem \ref{thm.LMtrans}) and conclude that there are normal generators of $\Mod(S_g)$ in $\H_g$ for all large enough $g$.

On the other hand, it is a different story if we take a normal closure \emph{within} the handlebody group $\H_g$. We first ask whether the handlebody group can be normally generated by a single element:

\begin{question}
    Does there exist $f \in \H_g$ such that the smallest normal subgroup of $\H_g$ containing $f$ is $\H_g$?
    If so, can $f$ be pseudo-Anosov?
\end{question}

Again, we refer to such an element $f \in \H_g$ as a normal generator of $\H_g$.
It might be natural to expect that there are normal generators of $\H_g$ among normal generators of $\Mod(S_g)$ in $\H_g$, which exist as observed above.
From the viewpoint of Lanier--Margalit's criterion, we also ask whether the small asymptotic translation length $\ell_{\T}$ implies a normal generation of $\H_g$.

\begin{question} \label{ques.LMhandle}
    Does there exist $c >  0$ and $g_0 \in \N$ such that for each $g \ge g_0$, if a pseudo-Anosov $f \in \H_g$ satisfies $\ell_{\T}(f) \le c$, then $f$ is a normal generator of $\H_g$?
\end{question}

There are some obstacles to directly adapting the approach of Lanier and Margalit. Note that there are two major steps in their proof, as we also observed in previous sections:
\begin{enumerate}
    \item[Step 1.] As a consequence of the well-suited criterion (Theorem \ref{thm.wellsuited}), if $f \in \Mod(S_g)$ satisfies $\ell_{\T}(f) \le \frac{1}{2} \log 2$, then $$[ \Mod(S_g), \Mod(S_g) ] \le \llangle f \rrangle.$$
    \item[Step 2.] When $g \ge 3$, $\Mod(S_g)$ is perfect (Theorem \ref{thm.harer}), i.e.,
    $$[\Mod(S_g), \Mod(S_g)] = \Mod(S_g).$$
\end{enumerate}

On the other hand, the handlebody group $\H_g$ is not perfect for all $g \ge 2$. Indeed, Wajnryb computed the abelianization of $\H_g$ as follows:

\begin{theorem} \cite[Theorem 20]{Wajnryb_handlebody}
    The abelianization of $\H_g$ is $\Z_2 \oplus \Z_2$ if $g = 2$ and is $\Z_2$ if $g > 2$.
\end{theorem}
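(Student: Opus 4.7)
\medskip
\noindent\textbf{Proof proposal.}
The plan is two-fold: first produce surjective homomorphisms from $\H_g$ onto the claimed abelianization (lower bound), and then bound $\H_g^{\mathrm{ab}}$ from above using a finite presentation.

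For the lower bound I would first build a ``determinant'' homomorphism
$$\det\colon \H_g \longrightarrow \mathrm{GL}_g(\Z) \stackrel{\det}{\longrightarrow} \{\pm 1\} \cong \Z_2,$$
where the first arrow sends $f \in \H_g$ to the automorphism of $H_1(V_g;\Z) \cong \Z^g$ induced by any extension $\widetilde f \colon V_g \to V_g$ of $f$. This is well-defined because any two extensions of $f$ differ by a homeomorphism of $V_g$ restricting to the identity on $\partial V_g = S_g$, which acts trivially on $H_1(V_g)$; and it is surjective because there exist elements of $\H_g$ whose extensions reverse the orientation of $V_g$. For $g = 2$ I would then construct an additional, independent $\Z_2$-valued homomorphism, for example via a spin-type invariant or by using the hyperelliptic involution of $S_2$, which lies in $\H_2$ and is not detected by $\det$.

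For the upper bound I would invoke a finite presentation of $\H_g$, due originally to Suzuki and sharpened by Wajnryb. A typical generating set consists of Dehn twists along meridian disks (curves bounding disks in $V_g$), handle-slide homeomorphisms, and in low genus an additional rotation-type generator; the $\H_g$-conjugacy classes among these generators determine their images in $\H_g^{\mathrm{ab}}$. Since $\H_g$ acts transitively on suitable families of meridian disks and handle slides, all generators within such a family collapse to a single element of the abelianization, sharply reducing the number of generators of $\H_g^{\mathrm{ab}}$. I would then read the defining relations modulo commutators, combine them to eliminate all but one surviving generator (respectively two, when $g=2$), and verify that the remaining generator(s) have order exactly $2$.

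The main obstacle is the combinatorial bookkeeping of Wajnryb's presentation. One must track every relation modulo commutators, verify that together they kill all but the expected surviving generator(s) and force them to have order exactly $2$, and crucially pinpoint the precise feature of genus $2$ that leaves an extra $\Z_2$ factor. A clean geometric explanation of this extra $\Z_2$ in genus two, independent of the specific presentation used, is to me the most delicate step.
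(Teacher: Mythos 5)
The paper itself gives no proof of this statement: it is quoted directly from Wajnryb [Theorem 20], and Wajnryb's argument is exactly the route you propose for the upper bound, namely abelianizing the finite presentation of $\H_g$ built on Suzuki's generators. So your plan follows the cited source, but as written it has genuine gaps. The decisive half, the upper bound, is entirely deferred: ``read the relations modulo commutators'' is precisely the content of the computation you would need to do, and you neither carry out any of it nor identify which feature of the genus-two presentation produces the extra $\Z_2$; you explicitly flag this as the main obstacle, so beyond stating the plan nothing is established there.

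The lower bound also has concrete problems. Your justification that the determinant map is onto $\{\pm 1\}$ is false: elements of $\H_g$ are orientation-preserving on $S_g$, and a homeomorphism of a compact oriented manifold with boundary preserves orientation if and only if its restriction to the boundary does, so an extension to $V_g$ can never reverse the orientation of $V_g$. Surjectivity of $\det$ instead comes from handle flips (rotating one handle by $\pi$), which act by $\mathrm{diag}(-1,1,\dots,1)$ on $H_1(V_g;\Z)$; in fact $\H_g \to \mathrm{GL}_g(\Z)$ is surjective, since handle slides, handle swaps and handle flips map to a generating set of $\mathrm{GL}_g(\Z)$. For $g=2$, the ``spin-type invariant or hyperelliptic involution'' suggestion is not a construction of a homomorphism: the hyperelliptic involution acts on $H_1(V_2;\Z)$ by $-I$, which is trivial in $\mathrm{GL}_2(\Z)^{\mathrm{ab}}$, so homological invariants of the handlebody will not detect it, and you give no independent $\Z_2$-valued homomorphism that does. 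A clean repair of the entire lower bound is to use the surjection $\H_g \to \mathrm{GL}_g(\Z)$ together with $\mathrm{GL}_2(\Z)^{\mathrm{ab}} \cong \Z_2 \oplus \Z_2$ and $\mathrm{GL}_g(\Z)^{\mathrm{ab}} \cong \Z_2$ for $g \ge 3$, which exhibits the claimed groups as quotients of the abelianization; but the reverse bound still requires the full presentation computation (or an equivalent argument), which is exactly the part of the proof you have not supplied.
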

In particular, the quotient $\H_g / [\H_g, \H_g]$ is always non-trivial, and hence Step 2 above cannot be adapted to the handlebody group.

Although the well-suited criterion (Theorem \ref{thm.wellsuited}, Step 1 above) does not guarantee the affirmative answer to Question \ref{ques.LMhandle}, we still ask whether one can show the well-suited criterion for the handlebody group.

\begin{question}
    Does the well-suited criterion hold for $\H_g$? More precisely, is there a graph $N_f$ associated with each $f \in \H_g$ so that the connectedness of $N_f$ implies that $[\H_g, \H_g]$ is contained in the normal closure of $f$ in $\H_g$?
\end{question}

It would also be interesting to explore applications of such graphs or the well-suited criterion involving them, beyond addressing the questions mentioned above.

\medskip
\bibliographystyle{alpha} 
\bibliography{NG}

\printindex

\end{document}